\documentclass[twocolumn]{article}

\usepackage[letterpaper]{geometry}
\usepackage{ltexpprt}
\usepackage{amsmath,amsfonts,amssymb}
\usepackage[toc]{appendix}
\usepackage{array}
\usepackage{bbm,bm}
\usepackage{color}
\usepackage{comment}
\usepackage{enumitem}
\usepackage{epic}
\usepackage{footnote}
\usepackage[bottom]{footmisc}
\usepackage{graphics}
\usepackage{graphicx}
\usepackage{caption}
\usepackage{subcaption}
\usepackage{import}
\usepackage[utf8]{inputenc}
\usepackage{marginnote}
\usepackage{mathtools}
\usepackage{multirow}
\usepackage{psfrag}
\usepackage{setspace}
\usepackage{tabularx}
\usepackage[nice]{nicefrac}
\usepackage{booktabs}
\usepackage{minitoc}
\usepackage[dvipsnames]{xcolor}
\usepackage{pgfplots}
\usepackage{hyperref}
\usepackage{xurl}
\usepackage{booktabs}
\usepackage{breakurl}

\usepackage{algorithm}
\usepackage{algpseudocode}
\newtheorem{thm}{Theorem}[section]
\newtheorem{coro}{Corollary}[section]
\newtheorem{prop}{Proposition}[section]
\newtheorem{defn}{Definition}[section]
\newtheorem{lem}{Lemma}[section]
\newtheorem{rem}{Remark}[section]

\newtheorem{note}{Note}[section]


\usepackage{definitions}

\title{Improved Performance of Stochastic Gradients with Gaussian Smoothing}

\newcommand{\erf}{\textup{erf}}
\newcommand{\relu}{\textup{relu}}

\mathtoolsset{showonlyrefs=true}

\definecolor{grad1}{HTML}{FEC010}
\definecolor{grad2}{HTML}{F06923}
\definecolor{grad3}{HTML}{B9561D}
\definecolor{grad4}{HTML}{4A707A}
\definecolor{grad5}{HTML}{0A4252}

\usetikzlibrary{patterns,decorations.pathreplacing,plotmarks}

\begin{document}
\author{{\sffamily A.~Starnes\thanks{Lirio AI Research, Lirio, Inc., Knoxville, TN 37923 ({\tt astarnes@lirio.com, cwebster@lirio.com}).}}
\and {\sffamily C.~Webster}\footnotemark[1]
}

\date{}

\maketitle


\begin{abstract}

This paper formalizes and analyzes Gaussian smoothing applied to two prominent optimization methods: Stochastic Gradient Descent (GSmoothSGD) and Adam (GSmoothAdam) in deep learning. By attenuating small fluctuations, Gaussian smoothing lowers the risk of gradient-based algorithms converging to poor local minima. These methods simplify the loss landscape while boosting robustness to noise and improving generalization, helping base algorithms converge more effectively to global minima. Existing approaches often rely on zero-order approximations, which increase training time due to inefficiencies in automatic differentiation. To address this, we derive Gaussian-smoothed loss functions for feedforward and convolutional networks, improving computational efficiency. Numerical experiments demonstrate the enhanced performance of our smoothing algorithms over unsmoothed counterparts, confirming the theoretical benefits.
\end{abstract}

\doparttoc 
\faketableofcontents 


\section{Introduction}
\label{sec:intro}


Optimization problems in machine learning often involve minimizing the expectation or sum of stochastic functions dependent on underlying observations. In many cases, the total loss is computed as the average loss over individual data samples. However, a significant challenge arises when optimal solutions for specific samples fail to generalize to the broader dataset or other observations. Stochastic Gradient Descent (SGD) is a widely used method to address these issues by computing gradients from random samples of the data. Despite its advantages, like gradient descent (GD), SGD can become trapped in suboptimal local minima. The methods proposed in this paper aim to overcome this limitation by introducing GSmooth techniques that enhance the performance of stochastic gradient-based algorithms.

In general, given $f(\bmx;\omega):\mathbb{R}^d\times \Omega\to\mathbb{R}$ where $\Omega$ is a probability space, we want to solve

\vspace{-0.25cm}
\begin{equation}
    \min_{x\in\mathbb{R}^d}E\big(f(\bmx;\omega)\big),
\end{equation}
where $E(\cdot)$ is the expectation over $\Omega$ with respect to the probability measure.

Typically, we do not have unrestricted access to $\Omega$ and instead are given a collection of samples where for each sample $\omega\in\Omega$ we can compute $f(\bmx;\omega)$ for any $\bmx\in\mathbb{R}^d$.
The optimization problem then becomes minimizing the sample average rather than the overall expectation.
In particular, let $\{\omega_1,...,\omega_K\}\subseteq \Omega$ represent our sample (or training set) and denote
\begin{equation}
    f(\bmx;\omega_k)=f_k(\bmx):\mathbb{R}^d\to\mathbb{R}
\end{equation}
for each $k\in\{1,...,K\}$.
Our goal is to find a global minimum of $f$ that satisfies
\begin{equation}
\label{eqn:conditions_on_f}
    E(f_k(\bmx))=f(\bmx)
    \text{ and }
    E(\nabla f_k(\bmx))=\nabla f(\bmx)
\end{equation}
almost surely (a.s.).
Note that if $f(\bmx)=\frac{1}{K}\sum_{k=1}^{K}f_{k}(\bmx)$, then $f$ satisfies the conditions in \eqref{eqn:conditions_on_f}.
Of course, if the probability distribution of $\Omega$ is known, one can modify $\nicefrac{1}{K}$ in the average to represent the correct likelihood of each observation.
Additionally, we consider the training set as fixed throughout this paper.

SGD samples a point from the training set and iteratively updates using the gradient over the training set.
Explicitly, at each step, $k_t\sim\text{Unif}([K])$ (where $[K]=\{1,...,K\}$), and
\begin{equation}
    \bmx_{t} = \bmx_{t-1} - \lrate\nabla f_{k_t}(\bmx_{t-1}).
\end{equation}
One drawback of SGD is that estimates of the gradient near a minimum can vary significantly depending on how much the gradient varies across the dataset as well as the magnitude of the learning rate.
This is due to the fact that a minimizer of $f$, denoted $\bmx_*$, is unlikely to be the minimizer of any $f_{k}$, which means $\|\nabla f_{k}(\bmx)\|$ will be nonzero for $\bmx$ near $\bmx_*$.
Hence the iterates will vary even near the minimizer.
A commonly used approach to address this is to decrease the learning rate on some schedule.
However, with a smaller learning rate, convergence is slower.

A popular modification of SGD is Adam \cite{KingmaBa14}, which has coordinate-wise adaptive learning rates and uses momentum. 
Intuitively, these changes mean that Adam increases the speed of convergence and reduces the noise of the gradient.
The updates for the momentum, variance, and iterates of Adam are 
\begin{align}
    \bmm_{t}&=\beta_t\bmm_{t-1}+(1-\beta_t)\nabla f_{k_t}(\bmx_{t-1})\\
    \bmv_t&=\theta_t\bmv_{t-1}+(1-\theta_t)(\nabla f_{k_t}(\bmx_{t-1}))^2\\
    \bmx_t&=\bmx_{t-1}-\eta_t\frac{\bmm_{t}}{\sqrt{\bmv_t+\epsilon}},
\end{align}
where $(\nabla f_{k_t}(\bmx_{t-1}))^2$ is performed coordinate-wise. Applying Gaussian smoothing to Adam allows us to attempt to reduce the noise of the gradient even further.

For any function $g:\mathbb{R}^d\to\mathbb{R}$, we define the $\sigma$-Gaussian smoothed version of $g$ as
\begin{equation}
    \label{eq:f_sigma}
    g_\sigma(\bmx) 
    = \frac{1}{\pi^{\nicefrac{d}{2}}} \int_{\mathbb{R}^d} g(\bmx + \sigma\bmu) \, e^{-\|\bmu\|_2^2} \,\mathrm{d} \bmu
\end{equation}
An example of a Gaussian smoothed function can be seen in Figure~\ref{fig:picture_of_smoothing}.
This example illustrates a non-convex function that becomes convex when the smoothing value is sufficiently large, specifically for any $\sigma \geq\sqrt{\nicefrac{2}{3}}$.
Gaussian smoothing can enhance the behavior of functions by flattening out small fluctuations (e.g., see~\cite{mobahi2015link}).
Assuming the gradient and integral can switch, the gradient of $g_\sigma(\bmx)$ can be computed as
\begin{equation}
    \label{eq:fs_grad}
	\nabla g_\sigma(\bmx) 
	= \frac{2}{\sigma \pi^{\nicefrac{d}{2}}}
	    \int_{\mathbb{R}^d} \bmu g(\bmx + \sigma \bmu) \, e^{-\|\bmu\|_2^2} \, \mathrm{d}\bmu,
\end{equation}
which is the result of a change of variables to pass $\bmx$ into the exponential, switching the gradient with the integral, and then reversing the change of variables.
For $f$ as in \eqref{eqn:conditions_on_f}, if $f_\sigma(\bmx)\in\mathbb{R}$ for each $\bmx\in\mathbb{R}^d$, we have
\begin{equation}
    f_{\sigma}(\bmx)
    =\frac{1}{\pi^{\frac{d}{2}}}\int_{\mathbb{R}^d}E(f_{k}(\bmx+\sigma\bmu))e^{-\|\bmu\|^2}d\bmu
     =E(f_{k,\sigma}(\bmx))
\end{equation}
where $f_{k,\sigma}$ is the $\sigma$-smoothed version of $f_k$.
We use the notation $f_{k,\sigma}$ to denote the smoothed version of $f_k$ (rather than $f_{\sigma,k}$) because we first evaluate at $\omega_k$ and then smooth. It is not possible to smooth the $\omega$-component of $f$ and then evaluate at $\omega_k$ since we do not have access to every $\omega$.

\begin{figure*}
    \centering
    \def\s{2.5}
    \begin{tikzpicture}[scale=0.45, font=\LARGE]
        \begin{axis}[
            axis lines = center,
            xtick={0},
            ytick={0},
            xmin=-3.5,
            xmax=4.5,
            ymin=-14,
            ymax=4.5,
        ]
        \addplot [domain=-3.2:3.9, samples=500,latex-latex,dashed,thick]{0.25*x^4-0.6666*x^3-2.5*x^2+6*x+sin(deg(10*x))} node[right,pos=0.95] {$f$};
        \addplot [domain=-1.15:1.65, samples=100,latex-latex,thick,grad1]{
            0.25*x^4-0.6666*x^3+(0.75*\s^2-2.5)*x^2+(6-\s^2)*x+0.1875*\s^4-1.25*\s^2+sin(deg(10*x))*2.718^(-25*\s^2)
        } node[left,pos=0.1] {$f_{\s}$};
        \end{axis}
    \end{tikzpicture}
    \def\s{2}
    \begin{tikzpicture}[scale=0.45, font=\LARGE]
        \begin{axis}[
            axis lines = center,
            xtick={0},
            ytick={0},
            xmin=-3.5,
            xmax=4.5,
            ymin=-14,
            ymax=4.5,
        ]
        \addplot [domain=-3.2:3.9, samples=500,latex-latex,dashed,thick]{0.25*x^4-0.6666*x^3-2.5*x^2+6*x+sin(deg(10*x))} node[right,pos=0.95] {$f$};
        \addplot [domain=-1.9:2.28, samples=100,latex-latex,thick,grad2]{0.25*x^4-0.6666*x^3+(0.75*\s^2-2.5)*x^2+(6-\s^2)*x+0.1875*\s^4-1.25*\s^2+sin(deg(10*x))*2.72^(-25*\s^2)} node[right,pos=0.1] {$f_{\s}$};
        \end{axis}
    \end{tikzpicture}
    \def\s{1.5}
    \begin{tikzpicture}[scale=0.45, font=\LARGE]
        \begin{axis}[
            axis lines = center,
            xtick={0},
            ytick={0},
            xmin=-3.5,
            xmax=4.5,
            ymin=-14,
            ymax=4.5,
        ]
        \addplot [domain=-3.2:3.9, samples=500,latex-latex,dashed,thick]{0.25*x^4-0.6666*x^3-2.5*x^2+6*x+sin(deg(10*x))} node[right,pos=0.95] {$f$};
        \addplot [domain=-2.5:3, samples=100,latex-latex,thick,grad3]{0.25*x^4-0.6666*x^3+(0.75*\s^2-2.5)*x^2+(6-\s^2)*x+0.1875*\s^4-1.25*\s^2+sin(deg(10*x))*2.72^(-25*\s^2)} node[right,pos=0.1] {$f_{\s}$};
        \end{axis}
    \end{tikzpicture}
    \def\s{0.5}
    \begin{tikzpicture}[scale=0.45, font=\LARGE]
        \begin{axis}[
            axis lines = center,
            xtick={0},
            ytick={0},
            xmin=-3.5,
            xmax=4.5,
            ymin=-14,
            ymax=4.5,
        ]
        \addplot [domain=-3.2:3.9, samples=500,latex-latex,dashed,thick]{0.25*x^4-0.6666*x^3-2.5*x^2+6*x+sin(deg(10*x))} node[right,pos=0.95] {$f$};
        \addplot [domain=-3.15:3.8, samples=100,latex-latex,thick,grad4]{0.25*x^4-0.6666*x^3+(0.75*\s^2-2.5)*x^2+(6-\s^2)*x+0.1875*\s^4-1.25*\s^2+sin(deg(10*x))*2.72^(-25*\s^2)} node[right,pos=0.05] {$f_{\s}$};
        \end{axis}
    \end{tikzpicture}
    \def\s{0.2}
    \begin{tikzpicture}[scale=0.45, font=\LARGE]
        \begin{axis}[
            axis lines = center,
            xtick={0},
            ytick={0},
            xmin=-3.5,
            xmax=4.5,
            ymin=-14,
            ymax=4.5,
        ]
        \addplot [domain=-3.2:3.9, samples=500,latex-latex,dashed,thick]{0.25*x^4-0.6666*x^3-2.5*x^2+6*x+sin(deg(10*x))} node[right,pos=0.95] {$f$};
        \addplot [domain=-3.2:3.85, samples=500,latex-latex,thick,grad5]{0.25*x^4-0.6666*x^3+(0.75*\s^2-2.5)*x^2+(6-\s^2)*x+0.1875*\s^4-1.25*\s^2+sin(deg(10*x))*2.72^(-25*\s^2)} node[right,pos=0.04] {$f_{\s}$};
        \end{axis}
    \end{tikzpicture}
    \caption{Gaussian smoothing transforming $f(x)=\frac{1}{4}x^4-\frac{2}{3}x^3-\frac{5}{2}x^2+6x+\sin(10x)$ into a convex function}
    \label{fig:picture_of_smoothing}
\end{figure*}

We can now generalize SGD by using the gradient of the smoothed version of $f_k$ instead of the gradient of $f_k$ itself.
We call this modification Gaussian smoothed SGD (GSmoothSGD), which is presented in Algorithm~\ref{alg:gssgd}.
In order to improve efficiency, we combine Gaussian smoothing with Adam (GSmoothAdam), which can be found in Algorithm~\ref{alg:gsadam}.
GSmoothSGD is a generalization of several other modifications of SGD, but, to the best of our knowledge, no one has unified these into the same framework for the analysis of $L$-smooth functions 
(see Section~\ref{sec:related_works} for further discussion).

\begin{algorithm}[t]
    \caption{GSmoothSGD}
    \label{alg:gssgd}
    \begin{algorithmic}[1]
        \Require $f:\mathbb{R}^d\to\mathbb{R}$, $(\sigma_t)_{t=1}^{T}$, $\bmx_0\in\mathbb{R}^d$, $\lrate>0$
        \For{$t=1\to T$}
            \State $k_t\sim\text{Unif}([K])$
            \State $\bmx_t = \bmx_{t-1} - \lrate\nabla f_{k_t,\sigma_t}(\bmx_{t-1})$
        \EndFor 
    \end{algorithmic}
\end{algorithm}

\begin{algorithm}[b]
    \caption{GSmoothAdam}
    \label{alg:gsadam}
    \begin{algorithmic}[1]
        \Require{$\eta_t>0$, $\epsilon\geq 0$, $0\leq\beta_t\leq\beta<1$, $\theta_t\in(0,1)$, $x_0\in\mathbb{R}^d$, $\sigma_t\geq 0$ for $t=1,2,...$}
        \Ensure{$m_0=0$, $v_0=0$}
        \For{$t=1,2,...$}
            \State $k_t\sim\text{Unif}([K])$
            \State \label{lst:line:momentum} $\bmm_{t+1}=\beta_t\bmm_{t-1}+(1-\beta_t)\nabla f_{k_t,\sigma_t}(\bmx_{t-1})$
            \State \label{lst:line:variance} $\bmv_t=\theta_t\bmv_{t-1}+(1-\theta_t)(\nabla f_{k_t,\sigma_t}(\bmx_{t-1}))^2$
            \State \label{lst:line:iterate} $\bmx_t=\bmx_{t-1}-\eta_t\dfrac{\bmm_{t+1}}{\sqrt{\bmv_t+\epsilon}}$
        \EndFor
    \end{algorithmic}
\end{algorithm}


Our goal is to apply smooth stochastic gradient techniques to deep learning problems, specifically by training neural networks using Gaussian smoothing. To achieve this, we need to compute the expectation in \eqref{eq:fs_grad}, typically approximated via numerical integration, as direct computation is intractable due to high dimensionality. However, this approach is computationally expensive. Inspired by~\cite{mobahi2016training}, we instead derive the analytical form of the neural network after Gaussian smoothing. With this closed form, the smoothed network can be coded and trained as efficiently as its unsmoothed counterpart. Moreover, having the exact smoothed gradient, rather than an approximation, aligns with our theoretical results.

The main contributions of this paper are:
%
\begin{itemize}
\itemsep2pt
    \item 
    Formalize GSmoothSGD and prove convergence results for $L$-smooth functions and arbitrary smoothing parameter sequences (Section~\ref{sec:GSSGD});
    \item
    Introduce GSmoothAdam and prove almost sure convergence of gradients to a stationary point for L-smooth functions (Section~\ref{sec:GSAdam});
    \item 
    Derive the mathematical formulation of Gaussian-smoothed feedforward (FFNN) and convolutional (CNN) neural networks and implement the new architecture in Python (Section~\ref{sec:explicit_smoothing}); and 
    \item 
    Provide numerical evidence demonstrating the effectiveness of smoothing in stochastic gradient settings (Section~\ref{sec:numerics}).
\end{itemize}

\begin{rem}
Stochastic Variance Reduced Gradient (SVRG)~\cite{JohnsonZhang13} is another technique for reducing gradient variance near the minimum.
In Section~\ref{app:convergence_svrg}, we discuss Gaussian smoothing SVRG (GSmoothSVRG), which we include to show another example of how to apply Gaussian smoothing.
\end{rem}

\subsection{Related Works}
\label{sec:related_works}

The use of Gaussian smoothing has roots in gradient-free optimization, homotopy continuation, and partial differential equations.
Gaussian smoothing has been applied to the non-stochastic (deterministic) gradient setting.
For an overview of smoothing gradient descent, see, e.g.,  \cite{gsgd} and the references therein.

While not the focus of this paper, Gaussian smoothing is popular in gradient-free optimization.
One of the seminal papers, \cite{nesterov2017random}, proposes Gaussian smoothing in order to construct a zero-order SGD-type method to optimize, since \eqref{eq:fs_grad} can clearly be approximated with numerical integration techniques.
The work~\cite{nesterov2017random} also provides several of the foundational results that we utilize in our work, see, e.g., Lemma~\ref{lem:fsigmalsmoothandconvex}, and the discussion that follows, with a focus on using the gradient of the smoothed function as a surrogate for the original gradient with a fixed (small) smoothing parameter value.
A zero-order Adam method is proposed in~\cite{chen2019zo}, where they also show that this method converges.
For a good overview of zero-order methods including those related to Gaussian smoothing, see~\cite{liu2020primer}.

From the perspective of exploiting partial differential equations (PDEs) to assist with optimization, a common approach is to use a PDE where the initial condition is the objective function and, as time increases, the PDE transforms the objective function into an improved version of itself.
An alternate definition for $g_{\sigma}$ is to define a convolution between $g$ and a Gaussian kernel.
Functions of this form are solutions to the heat equation with initial condition given by $g$ (see Section 2.3 of \cite{evans2010partial}).

The PDE version of smoothing is a particular case of homotopy continuation, where the homotopy is given by the solution to the PDE and is often at least differentiable.
The standard optimization by homotopy continuation algorithm (OGHC) finds the minimizers of the homotopy at $t$, starting at $t=1$ and iteratively reducing $t$ to 0. 
The homotopy is chosen so that optimizing the homotopy at $t=1$ is very easy.
Our modification of SGD is a generalized form of OGHC, now allowing for $\sigma$ to change at will rather than when close enough to the inner loops minima.
The majority of the theoretical results for OGHC come from the works~\cite{mobahi2012seeing, mobahi2012gaussian, mobahi2016closed, mobahi2015theoretical, mobahi2015link}.
Aside from the theory, the effort~\cite{mobahi2016training} trains recurrent neural networks using OGHC.
In the abstract, \cite{mobahi2016training} mentions that this approach is ``applicable to FFNNs''.
This work is what primarily motivated us to explicitly smooth FFNNs as well as CNNs.

To the best of our knowledge, no other works have attempted to combine Gaussian smoothing with Adam.
On the other hand, a number of other papers have modified OGHC as well and proven convergence results; we discuss the three most related which can be viewed as particular examples of GSmoothSGD.
For perspective, our convergence results for GSmoothSGD only assume that $f$ is $L$-smooth (the proof can be modified to show similar results if $f$ is just Lipschitz as was done in~\cite{gsgd}).
The work \cite{hazan2016graduated} focuses on the noisy problem where evaluations of the function or its gradient has noise, i.e., $f(\bmx) + \xi$ or $\nabla f(\bmx)+\xi$ with $\xi$ bounded random variable.
The authors' modification of OGHC uses $\sigma_t=\frac{1}{2}\sigma_{t-1}$, which can be viewed as using GSmoothSGD where $\sigma_t$ repeats as many times as they run SGD.
They also add an additional step after the SGD steps where an average is taken over a decision set that decreases between iterations.
Their convergence results have very strong assumptions, in particular $\|\bmx_{\sigma}^*-\bmx_{\sigma/2}^*\|\leq\frac{\sigma}{2}$ (where $\bmx_{\sigma}^*$ minimizes $f_{\sigma}$) and $f_{\sigma}$ is strongly convex in a ball $B(\bmx_{\sigma}^*,3\sigma)$.
These assumptions restrict the possibilities for $f$, for more details see Section 2.3 of \cite{gsgd}.
Zero-th order Perturbed Stochastic gradient descent (ZPSGD) was proposed in \cite{jin2018local}, which follows GSmoothSGD with a fixed $\sigma$ value and approximates the gradient with Monte Carlo estimates of \eqref{eq:fs_grad}.
The convergence results in~\cite{jin2018local} assume $f$ is Lipschitz, $L$-smooth, and bounded.

The most related paper to our analysis of GSmoothSGD is~\cite{iwakiri2022single}, which proposes the Single Loop Gaussian Homotopy (SLGH).
The authors convert OGHC from a double loop into a single loop that follows GSmoothSGD where $\sigma_t$ is updated either by $\sigma_{t}=\gamma\sigma_{t-1}$ for some $0<\gamma<1$ or by a $\sigma$-gradient descent step of $f_{\sigma}$, i.e., using $\frac{\partial}{\partial\sigma}f_{\sigma}(\bmx_{t-1})|_{\sigma=\sigma_t}$.
From the perspective of the heat equation, their parameters for the second option are updated using an SGD step on $f_{\sigma}$.
Finally, the convergence results they present are the least restrictive of the ones discussed thus far, only assuming that $f$ is bounded (see discussion after Lemma 2.10 of~\cite{gsgd}) and both Lipschitz and $L$-smooth.
Despite only requiring one of their assumptions ($f$ $L$-smooth), we provide similar convergence results.

\section{Background and preliminaries}
\label{sec:background}

In this section, we provide the necessary results and definitions required to prove the convergence of GSmoothSGD and GSmoothAdam.
As is typically the case with optimization results, we often use convexity and $L$-smoothness, $f$ being $L$-smooth means that $\nabla f$ is $L$-Lipschitz (full definitions can be found in Definition~\ref{defn:background_defs} in the Appendix).
For the stochastic gradient setting, we need a few results from the deterministic gradient setting.
The first result, Lemma~\ref{lem:fsigmalsmoothandconvex} whose proof can be found in \cite{nesterov2017random}, shows that smoothing preserves convexity and $L$-smoothness.
The rest of the results, Lemma~\ref{lem:fsigmagreaterthanfnonconvex}-\ref{lem:differentsmoothingvalues}, show how smoothing impacts the values of $f$ and their proofs can be found in \cite{gsgd}.
\begin{lem}
Let $f:\mathbb{R}^d\to\mathbb{R}$ and $\tau\geq\sigma\geq 0$.
\begin{enumerate}[label={(\alph*)},ref={\thelem~(\alph*)}]
    \item\label{lem:fsigmalsmoothandconvex} If $f$ is convex or $L$-smooth, then $f_{\sigma}$ is also.
    \item\label{lem:fsigmagreaterthanfnonconvex} If $f$ is nonconstant and $f(\bmx)\geq m$, then $f_{\sigma}(\bmx)> m$.
    \item\label{lem:fsigmagreaterthanf} If $f$ is convex, then $f_{\sigma}(\bmx)\geq f(\bmx)$.
    \item\label{lem:differentsmoothingvalues} If $f$ is $L$-smooth then $|f_{\tau}(\bmx)-f_{\sigma}(\bmx)|\leq\frac{Ld}{4}(\tau^2-\sigma^2)$.
\end{enumerate}
\end{lem}
%


In our convergence analysis, we need to bound the gradient of the smoothed function using the original function's gradient. 
The first result of Lemma~\ref{lem:difference_between_smoothed_gradients} reverses the inequality from~\cite{nesterov2017random} Lemma 4 (restated in~\ref{app:background_results}).
Lemma~\ref{lem:difference_between_smoothed_gradients}'s second result allows us to compare the gradients for varying smoothing values.
The proof of this lemma is in Appendix~\ref{app:background_results}.

\begin{lem}
\label{lem:lemma4_nesterov_modification}
\label{lem:difference_between_smoothed_gradients}
If $f:\mathbb{R}^d\to\mathbb{R}$ is $L$-smooth, then for any $\sigma,\tau\geq 0$
\begin{enumerate}[label={(\alph*)},ref={\thelem~(\alph*)}]
    \item $\|\nabla f_{\sigma}(\bmx)\|^2\leq 2\|\nabla f(\bmx)\|^2+\frac{1}{4}L^2\sigma^2(6+d)^3$
    \item $\|\nabla f_{\tau}(\bmx)-\nabla f_{\sigma}(\bmx)\|\leq L\left(\frac{3+d}{2}\right)^{\nicefrac{3}{2}}\sqrt{|\tau^2-\sigma^2|}$
\end{enumerate}
\end{lem}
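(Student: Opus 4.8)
The workhorse for both parts is a single estimate bounding the distance between the smoothed and unsmoothed gradients, namely $\|\nabla f_\sigma(\bmx)-\nabla f(\bmx)\|\le L\sigma\big(\tfrac{d+3}{2}\big)^{3/2}$; this is precisely the ``reverse of Nesterov's Lemma 4'' direction alluded to in the text. Once it is in hand, part (a) follows from the triangle inequality and part (b) from the semigroup (convolution) structure of Gaussian smoothing. I would therefore prove this estimate first and treat everything else as a consequence.

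To establish the estimate, I start from the zeroth-order representation \eqref{eq:fs_grad}, which with $\bmu\sim\mathcal N(0,\tfrac12 I_d)$ reads $\nabla f_\sigma(\bmx)=\tfrac{2}{\sigma}E_\bmu[\bmu\, f(\bmx+\sigma\bmu)]$. Since $E_\bmu[\bmu]=0$ and $E_\bmu[\bmu\bmu^\top]=\tfrac12 I_d$, I can rewrite $\nabla f(\bmx)=\tfrac{2}{\sigma}E_\bmu[\sigma\langle\nabla f(\bmx),\bmu\rangle\,\bmu]$, so that $\nabla f_\sigma(\bmx)-\nabla f(\bmx)=\tfrac{2}{\sigma}E_\bmu\big[\big(f(\bmx+\sigma\bmu)-f(\bmx)-\sigma\langle\nabla f(\bmx),\bmu\rangle\big)\bmu\big]$. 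The $L$-smoothness of $f$ gives the pointwise bound $|f(\bmx+\sigma\bmu)-f(\bmx)-\sigma\langle\nabla f(\bmx),\bmu\rangle|\le\tfrac{L}{2}\sigma^2\|\bmu\|^2$, and pulling norms inside the expectation yields $\|\nabla f_\sigma(\bmx)-\nabla f(\bmx)\|\le L\sigma\,E_\bmu[\|\bmu\|^3]$. Finally I invoke the Gaussian moment bound $E[\|z\|^3]\le(d+3)^{3/2}$ for $z\sim\mathcal N(0,I_d)$ from \cite{nesterov2017random}; rescaling via $\bmu=z/\sqrt2$ gives $E_\bmu[\|\bmu\|^3]\le\big(\tfrac{d+3}{2}\big)^{3/2}$, which is exactly the claimed constant.

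For (a), the triangle inequality together with the elementary inequality $(s+t)^2\le 2s^2+2t^2$ gives $\|\nabla f_\sigma(\bmx)\|^2\le 2\|\nabla f(\bmx)\|^2+2\|\nabla f_\sigma(\bmx)-\nabla f(\bmx)\|^2\le 2\|\nabla f(\bmx)\|^2+\tfrac14 L^2\sigma^2(d+3)^3$, and since $(d+3)^3\le(d+6)^3$ this is dominated by the stated right-hand side. For (b), I use that Gaussian smoothing composes as a convolution of Gaussians: smoothing $f_\sigma$ again at level $\rho$ adds variances, giving $(f_\sigma)_\rho=f_{\sqrt{\sigma^2+\rho^2}}$. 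Assuming without loss of generality $\tau\ge\sigma$ and setting $\rho=\sqrt{\tau^2-\sigma^2}$, this identity yields $f_\tau=(f_\sigma)_\rho$. Because $f_\sigma$ is itself $L$-smooth by \ref{lem:fsigmalsmoothandconvex}, applying the difference estimate to $f_\sigma$ with smoothing parameter $\rho$ gives $\|\nabla f_\tau(\bmx)-\nabla f_\sigma(\bmx)\|=\|\nabla(f_\sigma)_\rho(\bmx)-\nabla f_\sigma(\bmx)\|\le L\rho\big(\tfrac{d+3}{2}\big)^{3/2}$, which is the claim once I note the bound is symmetric in $\tau,\sigma$ so $\rho$ may be replaced by $\sqrt{|\tau^2-\sigma^2|}$.

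The only genuinely delicate points are the bookkeeping of constants under the nonstandard normalization $\bmu\sim\mathcal N(0,\tfrac12 I_d)$ (it is what turns the standard-Gaussian moment $(d+3)^{3/2}$ into $\big(\tfrac{d+3}{2}\big)^{3/2}$, and the sharp constant is essential for (b) to match) and the justification of the semigroup identity together with differentiation under the integral sign. I would verify the former by writing both sides as Gaussian convolutions and using that a sum of independent centered Gaussians has variance equal to the sum of the variances. The moment bound is the one nontrivial analytic input, but it is available from \cite{nesterov2017random} and need not be reproved.
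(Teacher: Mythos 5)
Your proof is correct, and for part (b) it coincides with the paper's argument: both invoke the semigroup identity $(f_\sigma)_\eta = f_\tau$ with $\eta=\sqrt{\tau^2-\sigma^2}$ (Lemma 2.8 of \cite{gsgd}) together with the estimate $\|\nabla f_\eta(\bmx)-\nabla f(\bmx)\|\le L\eta\left(\frac{3+d}{2}\right)^{3/2}$ applied to the $L$-smooth function $f_\sigma$; the only difference is that the paper cites this estimate as Lemma 3 of \cite{nesterov2017random}, while you reprove it, which makes your write-up self-contained but is not logically necessary. For part (a), however, you take a genuinely different route. The paper works directly with the integral representation of $\nabla f_\sigma$: it inserts and subtracts $\sigma\langle\nabla f(\bmx),\bmu\rangle$ inside the integrand, applies $\|a+b\|^2\le 2\|a\|^2+2\|b\|^2$ together with a Jensen/Cauchy--Schwarz step, and then bounds the resulting sixth Gaussian moment via Lemma 1 of \cite{nesterov2017random}, which is where the constant $(6+d)^3$ originates. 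You instead deduce (a) from the same third-moment gradient-difference estimate that drives (b), via $\|\nabla f_\sigma(\bmx)\|^2\le 2\|\nabla f(\bmx)\|^2+2\|\nabla f_\sigma(\bmx)-\nabla f(\bmx)\|^2$. This buys two things: a single key lemma powers both parts, and your intermediate constant $(3+d)^3$ is strictly sharper than the paper's $(6+d)^3$ (you correctly relax it at the end to match the statement). The trade-off is that the paper's computation avoids any reliance on the third-moment lemma for (a) and mirrors Nesterov's original Lemma 4 proof more closely. Your bookkeeping under the nonstandard normalization $\bmu\sim\mathcal{N}(0,\tfrac{1}{2}I_d)$ is also right: $E[\bmu\bmu^\top]=\tfrac{1}{2}I_d$ makes $\frac{2}{\sigma}E\big[\sigma\langle\nabla f(\bmx),\bmu\rangle\bmu\big]=\nabla f(\bmx)$, and the rescaling $\bmu=z/\sqrt{2}$ converts the standard-Gaussian moment bound $(d+3)^{3/2}$ into $\left(\frac{d+3}{2}\right)^{3/2}$ exactly as you claim.
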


\section{Gaussian smoothing SGD (GSmoothSGD)}
\label{sec:GSSGD}

Here we prove convergence results for GSmoothSGD.
Further, if $\sigma_t=0$ (i.e., no smoothing occurs), then we recover the standard convergence guarantees as with SGD.
If $\sigma_t$ is constant, then we show that the expectation of the gradient from GSmoothSGD ends up in to a ball centered at zero whose radius depends of this constant.

\begin{thm}
\label{thm:GSSGD}
Consider GSmoothSGD in Algorithm~\ref{alg:gssgd}.
Assume that $f$ is $L$-smooth in $\bmx$ and $E(\nabla f_k(\bmx))=\nabla f(\bmx)$.
Let $f^*$ denote the minimum of $f$.
Suppose that $E(\|\nabla f_k\|^2)\leq\varbound$ for any $k\in [K]$. 
Then for some $\nu<T$, 
\begin{multline}
\label{eqn:gssgd_result}
    E(\|\nabla f(\bmx_{\nu})\|^2)\leq\frac{2(f(\bmx_0)-f^*)}{T\lrate}
    +2\varbound\\
    +\frac{1}{2T\lrate^2}\sum_{t=1}^{T}\Big(d|\sigma_{t}^2-\sigma_{t-1}^2|+\sigma_{t}^2(6+d)^3\Big).
\end{multline}
\end{thm}

The proof modifies the standard SGD proof (see, e.g., \cite{sgdconvergence}), where, in addition to focusing on changes between $\bmx_t$ and $\bmx_{t+1}$ in successive iterations, we also address changes between $\sigma_t$ and $\sigma_{t+1}$.
Since we have bounds on the difference between $f_{\sigma_t}$ and $f_{\sigma_{t+1}}$, as well as on $\|\nabla f_{\sigma_{t+1}}\|$ and $\|\nabla f\|$, there are two methods for adapting the standard proof.
Specifically, we can replace $f_{\sigma_{t+1}}$ with $f_{\sigma_t}$ or substitute $\nabla f_{\sigma_{t+1}}$ with $\nabla f$.
However, we must make these changes carefully since each time we do it, there is a cost that depends on the smoothing parameters (e.g., added $\frac{Ld}{4}|\sigma_t^2-\sigma_{t-1}^2|$ terms).
The full proof can be found in Appendix~\ref{app:GSSGD}.

Compared to the standard convergence results for SGD (the terms in \eqref{eqn:gssgd_result} on the left of the sum), we obtain additive bounds that depend on $\sigma_t$ (specifically, the two terms in the sum).
The first of these terms, $|\sigma_{t}^2-\sigma_{t-1}^2|$, indicates that there is a theoretical cost to changing the smoothing parameters.
The second term, $\sigma_{t}^2$, represents an inherent theoretical cost to smoothing, arising from the fact that the minima of $f_{\sigma}$ and $f$ are not necessarily the same (i.e., even if $\|\nabla f_{\sigma_{t+1}}(\bmx_t)\|$ is small, $\|\nabla f(\bmx_k)\|$ may still not be sufficiently small).
However, if we assume, for instance, that $(\sigma_t)_{t=1}^{\infty}$ is square-summable, then as $T \to \infty$, the right-hand side of \eqref{eqn:gssgd_result} still converges to $2\lambda$, as in the unsmoothed setting.

Theorem~\ref{thm:GSSGD} only asserts that the convergence rate of GSmoothSGD is not worse than that of SGD.
However, depending on the function being optimized, GSmoothSGD can be shown to outperform SGD.
Our next result demonstrates this by providing sufficient conditions under which GSmoothSGD converges, regardless of the initial point.
Before stating the result, we introduce the following definition: For $\sigma \geq 0$, the basin of attraction at $\sigma$, denoted by $\mathcal{B}(\sigma)$, is the set of initial points where GSmoothSGD converges to the global minimizer of $f_{\sigma}$ for some sequence of smoothing parameters. (The formal definition can be found in Definition~\ref{defn:def_basin_of_attraction}.)

\begin{prop}
\label{prop:gssgd_improvement}
Assume there exists $\Sigma>0$ so that, for $k\in [K]$, $f_{k,\Sigma}$ is convex and share a common minimizer.
Suppose there exists a decreasing sequence, $(\sigma_n)_{n=1}^{N}$, with $\sigma_1=\Sigma$ and $\sigma_N=0$ such that for $n=1,2,\dots,N-1$, $f_{\sigma_n}$ has a unique minimizer $\bmx_{\sigma_n}^{\star}$ that is in an open neighborhood of $\mathcal{B}(\sigma_{n+1})$.
Then $\mathbb{R}^d=\mathcal{B}(0)$.
\end{prop}

The proof can be found in the Supplementary Material, Section~\ref{app:GSSGD}.
The proof's key idea is that, for each $\sigma_n > 0$, a finite sequence of smoothing parameters can be found that brings GSmoothSGD close to the global minimizer of $f_{\sigma_n}$.
These sequences are then chained together, including a potentially infinite sequence of smoothing parameters for $\sigma_N = 0$, resulting in a complete sequence that enables GSmoothSGD to converge to the global minimum of $f$.

As a concrete example, shown in Figure~\ref{fig:gssgd_improvement_example}, we consider $f(\bm{x})=\frac{1}{2}(\bm{x}^4-2\bm{x}^2)+\sin(8\bm{x})$ (where we have only a single observation so $f=f_k$).
We can analytically compute $f_{\sigma}$ and see that it is convex for $\sigma\geq 0.85$.
We use GSmoothSGD on $\sigma_1=1$, $\sigma_2=\frac{1}{2}$, $\sigma_3=\frac{1}{4}$, and $\sigma_4=0$.
Starting at any $\bmx\in\mathbb{R}$, GSmoothSGD with constant smoothing value $\sigma_1$ will converge to the global minimum of $f_{\sigma_1}$.
Once the iterates of GSmoothSGD are close enough to $\bmx_{\sigma_1}^{\star}$, we switch from $\sigma_1$ to $\sigma_2$ and repeat the process.
The result is that the basin of attraction for the global minimizer of $f$ using GSmoothSGD is all of $\mathbb{R}$, compared to a strictly smaller subset using SGD.

Unfortunately, the proof is an existence statement and does not provide practical details on constructing the sequence of smoothing parameters.
The open neighborhood assumption holds if $f$ has a continuous second derivative and the Hessian of $f_{\sigma}$ is positive definite at its global minimum, a common assumption in homotopy continuation theory (see, e.g.,~\cite{wu1996effective} Theorem 3).
Additionally, the condition that $\bmx_{\sigma_n}^{\star} \in \mathcal{B}(\sigma_{n+1})$ is analogous to an assumption in the convergence proof of Homotopy Optimization with Perturbations and Ensembles (HOPE) (\cite{dunlavy2005homotopy} Theorem 3.1).

\begin{figure*}[t]
    \centering
    \def\scale{0.5}
    \begin{subfigure}[t]{0.3\linewidth}
    \centering
        \begin{tikzpicture}[scale=\scale, font=\LARGE]
            \begin{axis}[
                axis lines = center,
                xtick={0},
                ytick={0},
                xmin=-2.2,
                xmax=1.9,
                ymin=-1.85,
                ymax=4.1,
                width=3.3in,
                height=2.9in,
                scale only axis
            ]
            \def\s{1}
            \addplot [domain=-1.55:1.55, samples=100,latex-latex,very thick,grad1]{0.5*(x^4+(3*\s^2-2)*x^2 + 0.75*\s^4-\s^2)+2.72^(-16*\s^2)*sin(deg(8*x))} node[right,pos=0.01] {$f_{1}$};
            \def\s{0.5}
            \addplot [domain=-1.72:1.714, samples=100,latex-latex,very thick,grad2]{0.5*(x^4+(3*\s^2-2)*x^2 + 0.75*\s^4-\s^2)+2.72^(-16*\s^2)*sin(deg(8*x))} node[above,pos=0] {$f_{\frac{1}{2}}$};
            \def\s{0.25}
            \addplot [domain=-1.81:1.733, samples=100,latex-latex,very thick,grad3]{0.5*(x^4+(3*\s^2-2)*x^2 + 0.75*\s^4-\s^2)+2.72^(-16*\s^2)*sin(deg(8*x))} node[anchor=-45,pos=0] {$f_{\frac{1}{4}}$};
            \addplot [domain=-1.85:1.66, samples=500,latex-latex,very thick,grad4]{0.5*(x^4-2*x^2)+sin(deg(8*x))} node[left,pos=0] {$f$};
            \end{axis}
        \end{tikzpicture}
        \caption{Plots of $f_1$, $f_{\frac{1}{2}}$, $f_{\frac{1}{4}}$, and $f$}
    \end{subfigure}
    \begin{subfigure}[t]{0.69\linewidth}
    \centering
        \def\s{1}
        \begin{tikzpicture}[scale=\scale, font=\LARGE]
            \begin{axis}[
                axis lines = center,
                xtick={0},
                ytick={0},
                xmin=-1.8,
                xmax=0.75,
                ymin=-1.85,
                ymax=4.1,
                width=1.85in,
                height=2.9in,
                scale only axis
            ]
            \addplot [domain=-1.55:0.65, samples=100,latex-latex,very thick,grad1]{0.5*(x^4+(3*\s^2-2)*x^2 + 0.75*\s^4-\s^2)+2.72^(-16*\s^2)*sin(deg(8*x))} node[right,pos=0.02] {$f_{1}$};
            \addplot [grad1, mark = *] coordinates {(0,-0.125)};
            \addplot [dashed, very thick, grad1] coordinates {(-0.3,-0.4) (-0.3,0.5)};
            \addplot [dashed, very thick, grad1] coordinates {(0.3,-0.4) (0.3,0.5)};
            \filldraw[fill=white, draw = white] (axis cs: -0.5,-1.2) rectangle (axis cs: 0.5,-0.45);
            \draw [grad1,very thick,decoration={brace,mirror,raise=2pt},decorate] 
                  (axis cs:-0.3,-0.4) --
                    node[below=4pt] {$B(\bm{x}_{1}^{\star},\delta_1)$} 
                  (axis cs:0.3,-0.4);
            \addplot [mark = *, nodes near coords={$\bm{x}_0$}, every node near coord/.style={anchor=0}] coordinates {(-1.5,3.531)};
            \addplot [mark = *, nodes near coords={$\bm{x}_{N_1}$}, every node near coord/.style={anchor=20}] coordinates {(-0.2,-0.10)};
            \addplot [-latex, very thick] coordinates {(-1.37,3.17) (-0.33,0.26)};
            \end{axis}
        \end{tikzpicture}
        \def\s{0.5}
        \begin{tikzpicture}[scale=\scale, font=\LARGE]
            \begin{axis}[
                axis lines = center,
                xtick={0},
                ytick={0},
                xmin=-2,
                xmax=0.1,
                ymin=-1.85,
                ymax=4.1,
                width=1.69in,
                height=2.9in,
                scale only axis
            ]
            \addplot [domain=-1.72:0.05, samples=100,latex-latex,very thick,grad2]{0.5*(x^4+(3*\s^2-2)*x^2 + 0.75*\s^4-\s^2)+2.72^(-16*\s^2)*sin(deg(8*x))} node[above,pos=0] {$f_{\frac{1}{2}}$};
            \addplot [grad2, mark = *] coordinates {(-0.839,-0.301)};
            \addplot [dashed, very thick, grad2] coordinates {(-0.539,-0.5) (-0.539,0.4)};
            \addplot [dashed, very thick, grad2] coordinates {(-1.139,-0.5) (-1.139,0.4)};
            \draw [grad2,very thick,decoration={brace,mirror,raise=2pt},decorate] 
                  (axis cs:-1.139,-0.5) --
                    node[below=4pt] {$B(\bm{x}_{\frac{1}{2}}^{\star},\delta_{\frac{1}{2}})$} 
                  (axis cs:-0.539,-0.5);
            \addplot [mark = *] coordinates {(-0.2,-0.144)};
            \node[below] at (axis cs:-0.15,-0.144) {$\bm{x}_{N_1}$};
            \addplot [mark = *] coordinates {(-0.65,-0.26)};
            \node[above] at (axis cs:-0.8,-0.2) {$\bm{x}_{N_2}$};
            \addplot [-latex, very thick] coordinates {(-0.29,-0.17) (-0.56,-0.24)};
            \end{axis}
        \end{tikzpicture}
        \def\s{0.25}
        \begin{tikzpicture}[scale=\scale, font=\LARGE]
            \begin{axis}[
                axis lines = center,
                xtick={0},
                ytick={0},
                xmin=-2.2,
                xmax=0.1,
                ymin=-1.85,
                ymax=4.1,
                width=1.85in,
                height=2.9in,
                scale only axis
            ]
            \addplot [domain=-1.81:0.05, samples=100,latex-latex,very thick,grad3]{0.5*(x^4+(3*\s^2-2)*x^2 + 0.75*\s^4-\s^2)+2.72^(-16*\s^2)*sin(deg(8*x))} node[left,pos=0] {$f_{\frac{1}{4}}$};
            \addplot [grad3, mark = *] coordinates {(-0.977,-0.807)};
            \addplot [dashed, very thick, grad3] coordinates {(-0.677,-0.9) (-0.677,0.5)};
            \addplot [dashed, very thick, grad3] coordinates {(-1.277,-0.9) (-1.277,0.5)};
            \draw [grad3,very thick,decoration={brace,mirror,raise=2pt},decorate] 
                  (axis cs:-1.277,-0.9) --
                    node[below=4pt] {$B(\bm{x}_{\frac{1}{4}}^{\star},\delta_{\frac{1}{4}})$} 
                  (axis cs:-0.677,-0.9);
            \addplot [mark = *] coordinates {(-0.65,-0.0015)};
            \node[above] at (axis cs:-0.4,-0.0015) {$\bm{x}_{N_2}$};
            \addplot [mark = *] coordinates {(-0.8,-0.44)};
            \node[right] at (axis cs:-0.75,-0.55) {$\bm{x}_{N_3}$};
            \addplot [-latex, very thick] coordinates {(-0.68,-0.09) (-0.77,-0.35)};
            \end{axis}
        \end{tikzpicture}
        \begin{tikzpicture}[scale=\scale, font=\LARGE]
            \begin{axis}[
                axis lines = center,
                xtick={0},
                ytick={0},
                xmin=-2.2,
                xmax=0.1,
                ymin=-1.85,
                ymax=4.1,
                width=1.85in,
                height=2.9in,
                scale only axis
            ]
            \addplot [domain=-1.85:0.05, samples=500,latex-latex,very thick,grad4]{0.5*(x^4-2*x^2)+sin(deg(8*x))} node[left,pos=0] {$f$};
            \addplot [mark = *] coordinates {(-0.8,-0.551)};
            \node[above] at (axis cs:-0.72,-0.5) {$\bm{x}_{N_3}$};
            \addplot [mark = *] coordinates {(-0.9828,-1.4992)};
            \node[left] at (axis cs:-0.9828,-1.4) {$\bm{x}^{\star}$};
            \addplot [-latex, very thick] coordinates {(-0.82,-0.65) (-0.96,-1.4)};
            \addplot [mark = *] coordinates {(-1.5,0.817)};
            \node[above] at (axis cs:-1.6,0.817) {$\bm{x}_{0}$};
            \end{axis}
        \end{tikzpicture}
        \caption{Changing $\sigma_n$ when iterates are within open neighborhood of basin of attraction $\mathcal{B}(\sigma_{n+1})$, $\delta_{\sigma_n}$ chosen so that $B(\bm{x}_{\sigma_n}^{\star},\delta_{\sigma_n})\subseteq\mathcal{B}(\sigma_{n+1})$}
    \end{subfigure}
        \caption{
        GSmoothSGD outperforming SGD. The basin of attraction for $\bm{x}^{\star}$ for GSmoothSGD is all of $\mathbb{R}$ but is strictly smaller for SGD. SGD starting at $\bm{x}_0$ does not find the global minimum, whereas GSmoothSGD does.
        }
    \label{fig:gssgd_improvement_example}
\end{figure*}

\begin{rem}
Instead of assuming that $f_{k,\Sigma}$ is convex, we could assume that $f_{k,\Sigma}$ is convex in a ball containing $\bmx_{\Sigma}^{\star}$ (see~\cite{mobahi2012optimization} Corollary 9 for sufficient conditions of $f$ for this to happen).
Then extend $f_{k,\Sigma}$ to be convex everywhere (see~\cite{yan2012extension} Theorem 3.2), which would make the basin of attraction for this fully convex function $\mathbb{R}^d$.
This is a small change to GSmoothSGD, but still shows off the benefits of Gaussian smoothing.
\end{rem}

\section{Gaussian smoothing Adam (GSmoothAdam)}
\label{sec:GSAdam}

Now, we show that GSmoothAdam has the same almost sure convergence as its unsmoothed counterpart.

\begin{thm}
\label{thm:gsadam}
Consider GSmoothAdam in Algorithm~\ref{alg:gsadam}.
Let $f$ be $L$-smooth and $f^*$ denote the minimum of $f$.
Assume $E(f_k(\bmx))=f(\bmx)$ and $E(\nabla f_k(\bmx))=\nabla f(\bmx)$.
Suppose that $E(\|\nabla f_k\|^2)\leq\lambda$ for any $k\in[K]$,
$\sum_{t=1}^{\infty}\eta_t=\infty$,
$\quad\sum_{t=1}^{\infty}\eta_t^2<\infty$,
$\quad\sum_{t=1}^{\infty}\eta_t(1-\theta_t)<\infty$,
and there exists a non-increasing sequence of real numbers $(\alpha_t)_{t\geq 1}$ such that $\eta_t=\Theta(\alpha_t)$.
If $\sqrt{|\sigma_t^2-\sigma_{t+1}^2|}\leq\eta_t$ and $\sigma_t\to 0$,
then
$\lim_{t\to\infty}\|\nabla f(\bmx_t)\|^2=0$ a.s. and $\lim_{t\to\infty}E(\|\nabla f(\bmx_t)\|^2)=0$.
\end{thm}

The proof is adapted from \cite{he2023convergence} to incorporate Gaussian smoothing.
Several lemmas from \cite{he2023convergence} can be applied as they are and do not require modification, as they apply directly to $f_{\sigma}$.
The only proofs that require modification are those where $\sigma$ varies (i.e., when we transition from $\sigma_t$ to $\sigma_{t+1}$).
As in the proof of GSmoothSGD convergence (Theorem~\ref{thm:GSSGD}), there are two methods to modify the original proof: replace $f_{\sigma_t}$ with $f_{\sigma_{t+1}}$ or substitute $\nabla f_{\sigma_{t+1}}$ with $\nabla f$.
The restrictions on $\eta_t$ come directly from~\cite{he2023convergence}.
The conditions on $\sigma_t$ are due to the need to guarantee that the sum of $|\sigma_t^2-\sigma_{t+1}^2|$ converges.
Full details of the proof of Theorem~\ref{thm:gsadam} are provided in Appendix~\ref{app:proofs_gsmoothadam}.

Compared to the GSmoothSGD convergence result, there are no additional terms dependent on $\sigma_t$ in the results Theorem~\ref{thm:gsadam}.
Instead, for certain sequences of smoothing parameters, the convergence results are identical to those of unsmoothed Adam.

\section{Explicitly Smoothing Neural Networks}
\label{sec:explicit_smoothing}


Gaussian smoothing in deep learning is often implemented using gradient-free approaches, such as Monte Carlo estimates of~\eqref{eq:fs_grad}. This method is computationally expensive, as each point in the gradient-free approximation requires randomly perturbing the model, a process more complex than simple function queries. To circumvent the computational burden of integral approximation, we explicitly compute the output of Gaussian smoothing applied to a network. Our approach is inspired by \cite{mobahi2016training}, which derives the explicit form of a Gaussian-smoothed recurrent neural network. We start by introducing the notation for explicit smoothing and then present the main results, detailing how Gaussian smoothing modifies the formulation of a neural network.

For a neural network, we use $C$ and $L$ to denote the number of convolutional and dense layers, respectively.
Given a convolutional layer $C_l$, let $k_l$ be the kernel size and $s_l$ be the stride length.
Then $w_l=\frac{\text{width}(x_{l-1})-k_l}{s_l}+1$ gives the width of the output of the convolution with a single kernel.
When we explicitly smooth a convolutional layer, we end up with an additional regularization term that depends on the input to that layer.
Define the convolutional norm of $x$ with respect to $C_l$ as
$\|x\|_{C_l}^2=\|x*J_{k_l}\|^2$
where $J_{k_l}$ is the $k_l\times k_l$ matrix whose entries are all 1.

Two additional regularization terms come from smoothing the dense and activation layers, which we define as
\begin{align}
        r_1(x,\theta)
        &=\|\theta\textup{diag}(\sqrt{(h^2)_{\sigma}}(x))\|^2_F
            -\|\theta\textup{diag}(h_{\sigma}(x))\|^2_F\\
        &\qquad\qquad+\tfrac{1}{2}\sigma^2 \textup{dim}(x)\|\sqrt{(h^2)_{\sigma}}(x)\|^2\\
        \hspace{-3pt}r_2(x,\theta,l)
        &=\tfrac{1}{2}\sigma^2\|h_{\sigma}(x)\|_{C_l}^2
            +\|\sqrt{(h^2)_{\sigma}}(x)\|_F^2\\
        &\qquad\qquad-\|h_{\sigma}(x)\|_F^2
            +\tfrac{1}{2}\sigma^2w_l^2\|\theta\|_F^2,
\end{align}
where $\|\cdot\|_F$ is the Frobenius norm.

With our notation defined, we are ready to state the optimization problem that Gaussian smoothed neural networks solve.
\begin{thm}
\label{thm:smooth_ffnn}
\label{thm:smooth_cnn}
Smoothing the Constrained FFNN gives
\begin{align}
\begin{split}
    \min_{\theta,b}\quad&\sum_{n=1}^{N}\left(
        \|x_L^n-y^n\|^2
        +\sum_{l=2}^{L}\lambda_lr_1(x_{l-1}^n,\theta_l)
    \right)\\
    \textup{s.t.}\quad&x_1^n=\theta_1x_0^n+b_1\\
    &x_l^n=\theta_lh_{\sigma}(x_{l-1}^n)+b_l\text{ for }l=2,...,L
\end{split}
\end{align}
Smoothing the Constrained CNN formulation gives (removing constants)
\begin{align}
\begin{split}
    \min_{\theta,b}&\quad\sum_{n=1}^{N}\Bigg(
        \|x_{C+L}^n-y^n\|^2
        +\mathcal{R}_n
    \Bigg)
\end{split}
\end{align}
\begin{align}
\begin{split}
    \textup{s.t.}\quad&x_1^n=x_0^n*\theta_1+b_1\\
    &x_l^n=h_{\sigma}(x_{l-1}^n)*\theta_l+b_l\text{ for }l=2,...,C\\
    &x_l^n=\theta_lh_{\sigma}(x_{l-1}^n)+b_l\text{ for }l=C+1,...,C+L\\
    &\mathcal{R}_n=\sum_{l=2}^{C}\lambda_lr_2(x_{l-1}^n,\theta_l,l)+\sum_{l=C+1}^{C+L}\lambda_lr_1(x_{l-1}^n,\theta_l)
\end{split}
\end{align}
\end{thm}

\begin{table*}
    \caption{Unsmoothed and smoothed layers (layers have no activation functions unless otherwise specified.)}
    \label{tab:summary_of_nnet_smoothing_names}
    \centering
    \resizebox{0.75\textwidth}{!}{
    \begin{tabular}{cccc|c}
    \multirow{3}{*}{\parbox{0.1\linewidth}{\vspace{7pt}\centering\textbf{Original}\\\textbf{Layer}\\\textbf{Name}}}&
    \multirow{3}{*}{\parbox{0.1\linewidth}{\vspace{7pt}\centering\textbf{Layer}\\\textbf{Number}\\($\bm{l}$)}}&
                                                         \multicolumn{3}{c}{\textbf{Smoothed}}\\\cmidrule(lr){3-5}
                 &            &\multirow{2}{*}{\parbox{0.1\linewidth}{\centering\textbf{Layer}\\\textbf{Name}}}           &\multicolumn{2}{c}{\textbf{Regularizers}}\\\cmidrule(lr){4-5}
                  &&            &\textbf{Input/Output}&\textbf{Weights}\\\specialrule{.1em}{.05em}{.05em}
         \multirow{2}{*}{Dense}&$l=1$                     &Dense                    &$-$                                                                      &$-$\\\cline{2-5}
                               &$l>1$                     &Dense                    &$\frac{\sigma^2d_l}{2}\|\bmx_{l-1}\|^2$                               &$\frac{\sigma^2}{2}\|\theta_l\|^2_F$\\\specialrule{.1em}{.05em}{.05em}
         Activation            &\multirow{2}{*}{$l>1$}                 &Activation&\multirow{2}{*}{$\|\sqrt{(h^2)_{\sigma}}(\bmx_{l-1})\|^2+\|h_{\sigma}(\bmx_{l-1})\|^2$}&\multirow{2}{*}{$-$}\\
         ($h$)     &                     &($h_{\sigma}$)&&\\\specialrule{.1em}{.05em}{.05em}
         \multirow{2}{*}{Convolution}           &$l=1$                     &Convolution              &$-$                                                                      &$-$\\\cline{2-5}
                    &$l>1$                     &Convolution              &$\frac{\sigma^2}{2}\|\bmx_{l-1}\|_{C_l}^2$                            &$\frac{\sigma^2w_l^2}{2}\|\theta_l\|^2_F$\\\specialrule{.1em}{.05em}{.05em}
         Dropout ($p$)         &$l\geq 1$                 &Identity                 &$p\|\bmx_l\|^2$                                                       &$-$\\\specialrule{.1em}{.05em}{.05em}
         Average Pool          &$l\geq 1$                 &Average Pool             &$-$                                                                      &$-$\\\specialrule{.1em}{.05em}{.05em}
         Flatten               &$l\geq 1$                 &Flatten                  &$-$                                                                      &$-$
    \end{tabular}
    }
\end{table*}

\noindent For a FFNN, this result says that a regularization term ($r_1$) is added to the loss and the original activation function are smoothed.
For a CNN, the loss has regularization terms associated with the convolution ($r_2$) and dense ($r_1$) layers and each activation function is replaced by its smoothed version.
Unless one is willing to spend computation time to estimate the smoothed activation function, it needs to be analytically computed.
Due to its popularity and prevalence, we compute all necessary computations for ReLU (see Proposition~\ref{prop:results_from_mobahi}), but this can be done for other activation functions (e.g., see~\cite{mobahi2016training} Table 1).

In order to prove Theorem~\ref{thm:smooth_ffnn} and apply Gaussian smoothing to a (convolutional) neural network, we explicitly calculate the optimization problems that smoothed FFNNs and CNNs satisfy.
We do this (as done in \cite{mobahi2016training}), by first converting the constrained problems \eqref{eqn:constrained_ffnn} and \eqref{eqn:constrained_cnn} to their corresponding unconstrained problems.
Then we smooth the unconstrained problems and convert them back into a constrained formulation.
The proofs of these results can be found in Appendix~\ref{app:details_explicit_smoothing}.

Although Theorem~\ref{thm:smooth_ffnn} provides the mathematical foundation for explicitly smoothed neural networks, translating this theory into code is not straightforward.
We now explain how to implement Gaussian-smoothed neural networks in code.
To begin, we separate the dense layer from its activation function and arrange them sequentially, meaning the dense layer appears without an activation function, followed by a separate activation layer.
This approach prevents regularization terms from mixing the weights and inputs of a layer (from $r_1$ particularly) and instead applies regularization terms separately to the weights and inputs.
We apply the same method to convolutional layers.
Table~\ref{tab:summary_of_nnet_smoothing_names} shows the correspondence between the original layers and their smoothed counterparts, along with the associated regularization terms.
Further explanation of this process, along with examples, can be found in Appendix~\ref{app:numerics}.

\section{Numerical experiments}
\label{sec:numerics}



\begin{figure*}[t]
    \centering
    \begin{subfigure}{0.90\linewidth}
        \includegraphics[width=0.98\textwidth]{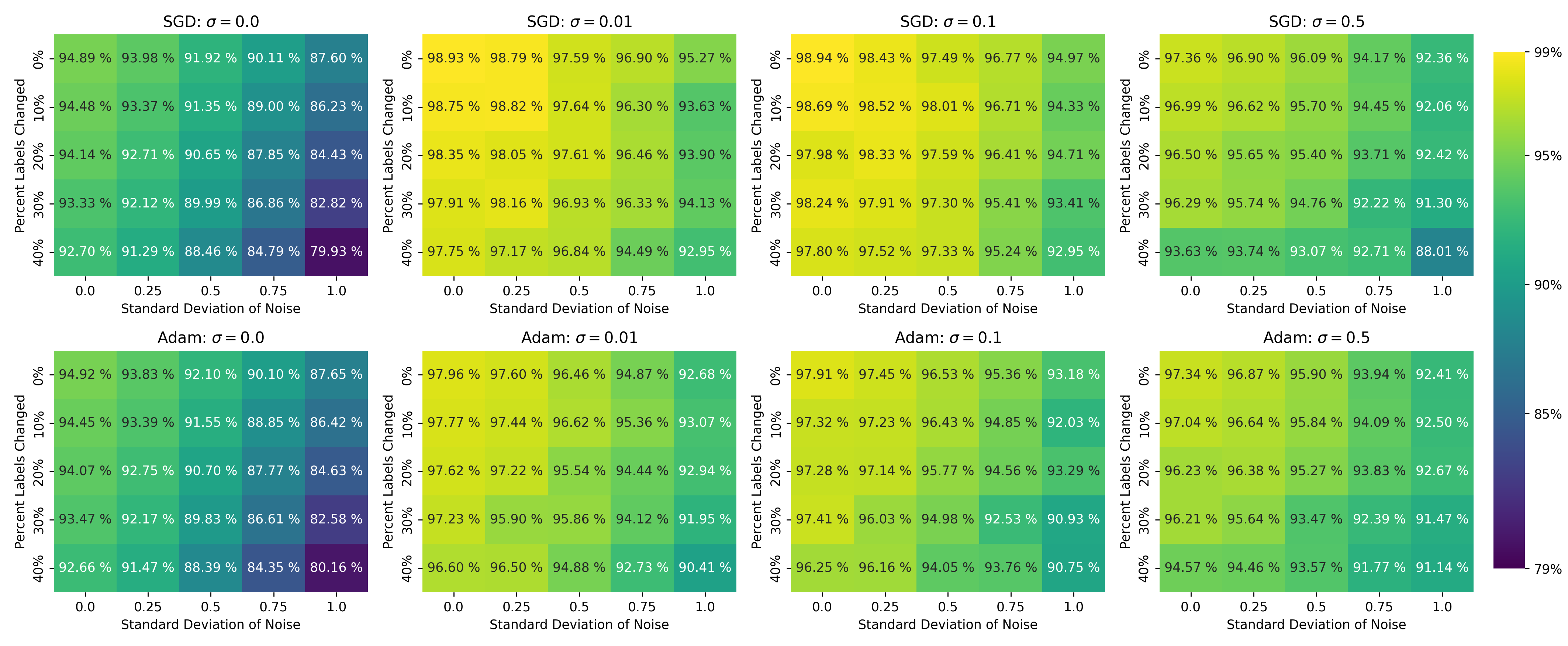}
        \caption{MNIST Experiments: SGD (top), Adam (bottom)}
        \label{fig:mnist_experiments}
    \end{subfigure}
    \begin{subfigure}{0.90\linewidth}
        \includegraphics[width=0.98\textwidth]{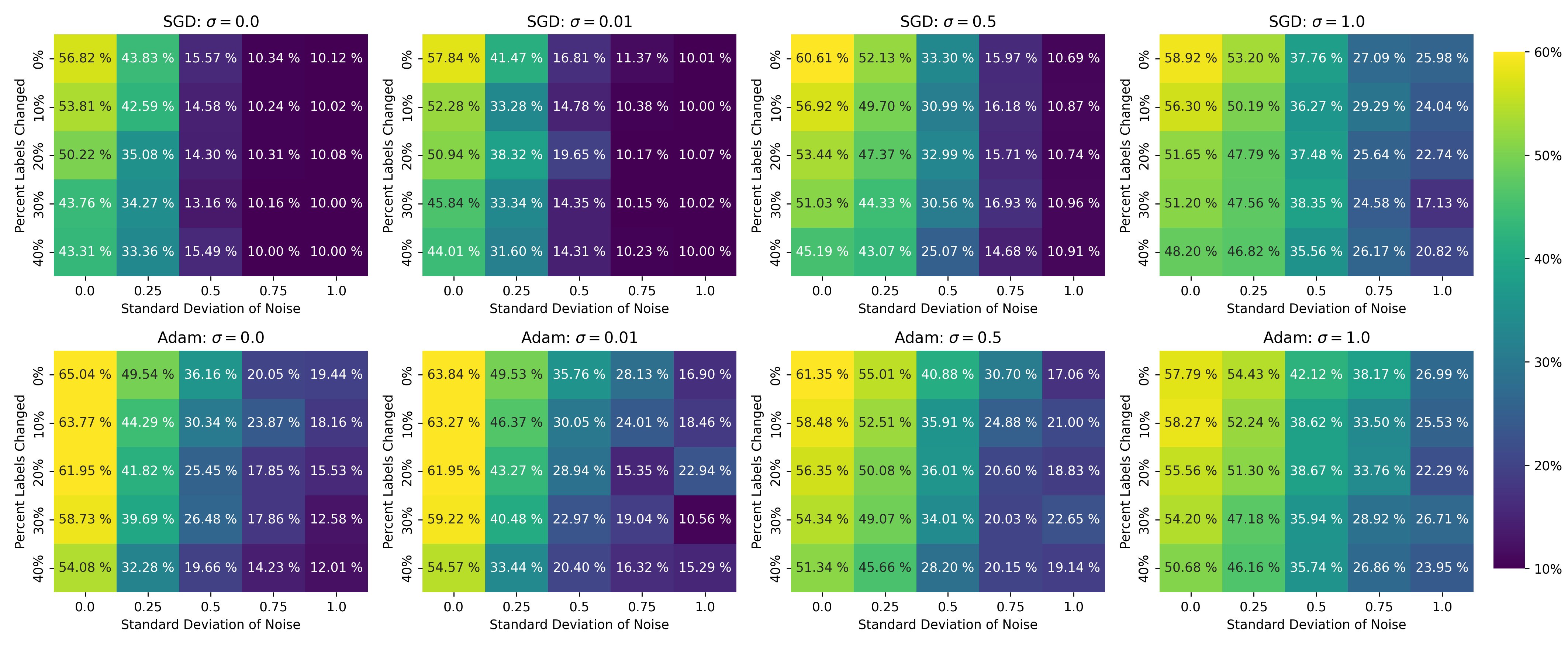}
        \caption{CIFAR-10 Experiments: SGD (top), Adam (bottom)}
        \label{fig:cifar10_experiments}
    \end{subfigure}
    \caption{Results from numerical experiments}
    \label{fig:experiments}
\end{figure*}

To apply our results, we train CNNs on the MNIST~\cite{LeCun2005TheMD} and CIFAR-10~\cite{Krizhevsky09learningmultiple} datasets with two types of noise.
First, we add Gaussian noise with a mean of 0 and standard deviations of 0, 0.25, 0.5, 0.75, and 1 to the training images (see Figure~\ref{fig:noisy_images} for examples).
Second, we randomly change the labels of 0\%, 10\%, 20\%, 30\%, and 40\% of the training images.
These combinations result in 25 variations of input and label noise.
The same network architecture was used for both the MNIST and CIFAR-10 experiments which consists of a convolutional layer with 32 kernels of size 4 and a stride of 1, followed by a ReLU activation and an average pooling layer with a window size and stride of 2.
This is followed by a dense layer with 128 nodes, a ReLU activation, and finally an output layer with 10 neurons, representing the 10 classes in the dataset.
This is summarized in Appendix Table~\ref{tab:cnn_architecture} and Table~\ref{tab:smooth_cnn_architecture} for the smoothed network.

All models are trained with a batch size of 1, using mean-squared error as the loss function, with the optimizer’s default parameters.
A hyperparameter search was conducted to tune the models using the MNIST dataset, utilizing 5-fold cross-validation on noise-free images, and the optimal hyperparameters can be found in Table~\ref{tab:smooth_cnn_hyperparameters}.
The training results of the experiments are evaluated on a test set without any noise.
The experiments were conducted using Python 3.10 and TensorFlow.
Full experimental details are provided in Section~\ref{app:numerics} of the Appendix.

We start by discussing the results of the MNIST experiments, which compare the performance of SGD, Adam, and their smoothed variants.
The key results are presented in Figure~\ref{fig:experiments}, while detailed results for all $\sigma$ values are available in Appendix Figure~\ref{fig:full_noise_mnist}.
For the unsmoothed CNN ($\sigma=0$), performance significantly decreases as the standard deviation increases, regardless of the optimizer used.
Although this is also true for the smoothed CNNs, the performance decline was not as pronounced.

For small $\sigma$-values (0.01, 0.1, 0.5), the smoothed networks consistently outperformed the unsmoothed network.
As $\sigma$ increases from 0.01 to 0.5, overall accuracy decreases in the SGD-trained models.
This suggests that on this dataset larger values of $\sigma$ are more susceptible to noise.
However, since the minimum occurs at $\sigma=0$, we expect smaller values of $\sigma$ to smooth the loss landscape sufficiently, resulting in better performance than larger $\sigma$ values.
In both the smoothed and unsmoothed cases, test accuracy exhibits greater variation as the standard deviation increases compared to the percentage of labels changed.
Surprisingly, the CNNs trained with SGD perform better than those trained with Adam.
This would likely change if the optimizer’s parameters were tuned.
However, the SGD models showed a more significant performance decline compared to Adam when input noise levels were high.

Switching to the CIFAR-10 experiments, the key results are shown in Figure~\ref{fig:cifar10_experiments} with full results shown in Appendix Figure~\ref{fig:full_noise_cifar10}.
Once again, we observe that image noise affects training more than label noise.
Smoothing the loss function improves stability for both optimizers by reducing their sensitivity to noise.
For both Adam and SGD, the best results occur when noise is added at $\sigma=1.0$, suggesting that more smoothing was necessary for the CIFAR-10 experiments compared to the MNIST experiments.
However, using a larger model to learn the CIFAR-10 labels seems warranted.
In contrast to the MNIST results, Adam generally performs better across all settings, particularly in high-noise conditions.
Although noise significantly impacts both optimizers, the effect is mitigated at higher $\sigma$ values, suggesting that loss-smoothing techniques effectively stabilize learning in noisy environments.

\section{Conclusions}
\label{sec:conclusions}

In this paper, we provided theoretical convergence proofs for smoothing two stochastic gradient algorithms, along with sufficient conditions for performance improvements. We first defined the GSmoothSGD algorithm and proved that it converges to a ball around the minimizer under specific sequences of smoothing parameters (e.g., when constant). We then introduced GSmoothAdam and demonstrated its almost-sure convergence to a stationary point, showing that smoothing does not impose additional computational costs. These results offer a general framework for applying smoothing to other stochastic gradient algorithms.

Our numerical experiments on neural networks further confirmed that smoothing improves performance and robustness to noise in training. To align with our theoretical findings, we explicitly derived Gaussian-smoothed versions of feedforward and convolutional neural networks, which can be adapted to a broad range of architectures. Future work will explore smoothing with other distributions beyond Gaussian, such as compactly supported distributions, to extend these results to cases involving only local Lipschitz constants of the gradient.

\newpage 
\bibliographystyle{plain}
\bibliography{paper}

\begin{thebibliography}{10}

\bibitem{chen2019zo}
Xiangyi Chen, Sijia Liu, Kaidi Xu, Xingguo Li, Xue Lin, Mingyi Hong, and David Cox.
\newblock Zo-adamm: Zeroth-order adaptive momentum method for black-box optimization.
\newblock {\em Advances in neural information processing systems}, 32, 2019.

\bibitem{sgdconvergence}
Christopher De~Sa.
\newblock Lecture 5: Stochastic gradient descent.
\newblock https://www.cs.cornell.edu-/courses/cs4787/2019sp/notes/lecture5.pdf, 2019.
\newblock Accessed: 2024-09-26.

\bibitem{dunlavy2005homotopy}
Daniel~M Dunlavy and Dianne~P O'Leary.
\newblock Homotopy optimization methods for global optimization.
\newblock Technical report, Sandia National Laboratories (SNL), Albuquerque, NM, and Livermore, CA~…, 2005.

\bibitem{evans2010partial}
Lawrence~C Evans.
\newblock {\em Partial differential equations}, volume~19.
\newblock American Mathematical Society, 2010.

\bibitem{hazan2016graduated}
Elad Hazan, Kfir~Yehuda Levy, and Shai Shalev-Shwartz.
\newblock On graduated optimization for stochastic non-convex problems.
\newblock In {\em International conference on machine learning}, pages 1833--1841. PMLR, 2016.

\bibitem{he2023convergence}
Meixuan He, Yuqing Liang, Jinlan Liu, and Dongpo Xu.
\newblock Convergence of adam for non-convex objectives: Relaxed hyperparameters and non-ergodic case.
\newblock {\em arXiv preprint arXiv 2307.11782}, 2023.

\bibitem{iwakiri2022single}
Hidenori Iwakiri, Yuhang Wang, Shinji Ito, and Akiko Takeda.
\newblock Single loop gaussian homotopy method for non-convex optimization.
\newblock {\em Advances in Neural Information Processing Systems}, 35:7065--7076, 2022.

\bibitem{jin2018local}
Chi Jin, Lydia~T Liu, Rong Ge, and Michael~I Jordan.
\newblock On the local minima of the empirical risk.
\newblock {\em Advances in neural information processing systems}, 31, 2018.

\bibitem{JohnsonZhang13}
R.~Johnson and T.~Zhang.
\newblock Accelerating stochastic gradient descent using predictive variance reduction.
\newblock {\em Advances in Neural Information Processing Systems}, 26:315--323, 2013.

\bibitem{KingmaBa14}
D.~P. Kingma and J.~Ba.
\newblock Adam: A method for stochastic optimization.
\newblock {\em CoRR preprint, https://arxiv.org/abs/1412.6980}, 2014.

\bibitem{Krizhevsky09learningmultiple}
Alex Krizhevsky.
\newblock Learning multiple layers of features from tiny images, 2009.

\bibitem{LeCun2005TheMD}
Yann LeCun and Corinna Cortes.
\newblock The mnist database of handwritten digits.
\newblock http://yann.lecun.com/exdb/mnist/, 2005.

\bibitem{liu2020primer}
Sijia Liu, Pin-Yu Chen, Bhavya Kailkhura, Gaoyuan Zhang, Alfred~O Hero~III, and Pramod~K Varshney.
\newblock A primer on zeroth-order optimization in signal processing and machine learning: Principals, recent advances, and applications.
\newblock {\em IEEE Signal Processing Magazine}, 37(5):43--54, 2020.

\bibitem{mobahi2012optimization}
Hossein Mobahi.
\newblock {\em Optimization by Gaussian smoothing with application to geometric alignment}.
\newblock University of Illinois at Urbana-Champaign, 2012.

\bibitem{mobahi2016closed}
Hossein Mobahi.
\newblock Closed form for some gaussian convolutions.
\newblock {\em arXiv preprint arXiv:1602.05610}, 2016.

\bibitem{mobahi2016training}
Hossein Mobahi.
\newblock Training recurrent neural networks by diffusion.
\newblock {\em arXiv preprint arXiv:1601.04114}, 2016.

\bibitem{mobahi2015link}
Hossein Mobahi and John~W Fisher.
\newblock On the link between gaussian homotopy continuation and convex envelopes.
\newblock In {\em Energy Minimization Methods in Computer Vision and Pattern Recognition: 10th International Conference, EMMCVPR 2015, Hong Kong, China, January 13-16, 2015. Proceedings 10}, pages 43--56. Springer, 2015.

\bibitem{mobahi2015theoretical}
Hossein Mobahi and John Fisher~III.
\newblock A theoretical analysis of optimization by gaussian continuation.
\newblock In {\em Proceedings of the AAAI Conference on Artificial Intelligence}, volume~29, 2015.

\bibitem{mobahi2012gaussian}
Hossein Mobahi and Yi~Ma.
\newblock Gaussian smoothing and asymptotic convexity.
\newblock {\em Coordinated Science Laboratory Report no. UILU-ENG-12-2201, DC-254}, 2012.

\bibitem{mobahi2012seeing}
Hossein Mobahi, C~Lawrence Zitnick, and Yi~Ma.
\newblock Seeing through the blur.
\newblock In {\em 2012 IEEE Conference on Computer Vision and Pattern Recognition}, pages 1736--1743. IEEE, 2012.

\bibitem{nesterov2017random}
Yurii Nesterov and Vladimir Spokoiny.
\newblock Random gradient-free minimization of convex functions.
\newblock {\em Foundations of Computational Mathematics}, 17(2):527--566, 2017.

\bibitem{gsgd}
Andrew Starnes, Anton Dereventsov, and Clayton Webster.
\newblock Gaussian smoothing gradient descent for minimizing high-dimensional non-convex functions.
\newblock {\em arXiv preprint arXiv 2311.00521}, 2023.

\bibitem{wu1996effective}
Zhijun Wu.
\newblock The effective energy transformation scheme as a special continuation approach to global optimization with application to molecular conformation.
\newblock {\em SIAM Journal on Optimization}, 6(3):748--768, 1996.

\bibitem{yan2012extension}
Min Yan.
\newblock Extension of convex function.
\newblock {\em arXiv preprint arXiv:1207.0944}, 2012.

\end{thebibliography}

\newpage

\onecolumn
\clearpage
\appendix


\addcontentsline{toc}{section}{Appendix} 
\part{Appendix} 

\parttoc 

\section{Proofs of Background Results}
\label{app:background_results}

We begin this section by stating the definitions of convexity and $L$-smoothness as well as mentioning some key identities that we use throughout our proofs, then we provide the proofs of Lemmas~\ref{lem:lemma4_nesterov_modification} and~\ref{lem:fstronglyconvexthensoisfsigma}.

\begin{defn}
\label{defn:background_defs}
Let $f:\mathbb{R}^d\to\mathbb{R}$.
\begin{enumerate}[label={(\alph*)},ref={\thedefn~(\alph*)}]
    \item We say $f$ is convex, if for $\bmx,\bmy\in\mathbb{R}^d$
        \begin{equation}
            f(t\bmx+(1-t)\bmy)\leq tf(\bmx)+(1-t)f(\bmy).
        \end{equation}
    \item We say $f$ is $\gamma$-strongly convex if
        \begin{equation}
            f(t\bmx+(1-t)\bmy)\leq tf(\bmx)+(1-t)f(\bmy)-\frac{\gamma}{2}t(1-t)\|\bmx-\bmy\|^2.
        \end{equation}
    \item We say $f$ is $L$-smooth, if for $\bmx,\bmy\in\mathbb{R}^d$
        \begin{equation}
            |f(\bmy)-f(\bmx)-\langle\nabla f(\bmx),\bmy-\bmx\rangle|\leq\frac{L}{2}\|\bmy-\bmx\|^2.
        \end{equation}
\end{enumerate}
\end{defn}

As in \cite{nesterov2017random}, it is often convenient to represent the gradient of the smoothed function as an integral difference\footnote{This comes from the fact that $\int_{\mathbb{R}^d}f(\bmx)\bmu e^{-\|\bmu\|^2}\;d\bmu=0$.}
\begin{equation}
\label{eqn:firstfinitedifferenceintegral}
    \nabla f_{\sigma}(\bmx)=\frac{2}{\pi^{\frac{d}{2}}\sigma}\int_{\mathbb{R}^d}\Big(f(\bmx+\sigma \bmu)-f(\bmx)\Big)\bmu e^{-\|\bmu\|^2}\;d\bmu.
\end{equation}
Another convenience from \cite{nesterov2017random}, is rewriting the gradient of the original function in an integral
\begin{equation}
    \nabla f(\bmx)=\frac{1}{\pi^{\frac{d}{2}}}\int_{\mathbb{R}^d}\langle\nabla f(\bmx),\bmu\rangle \bmu e^{-\|\bmu\|^2}\;d\bmu.
\end{equation}

We state Lemma 4 from~\cite{nesterov2017random}, which we use to bound the norm of the original function by the norm of the smoothed function.

\begin{lem}[Lemma 4~\cite{nesterov2017random}]
\label{lem:nesterov_lemma_4}
If $f:\mathbb{R}^d\to\mathbb{R}$ is $L$-smooth, then for any $\bmx\in\mathbb{R}^d$ and $\sigma\geq 0$
\begin{equation}
    \|\nabla f(\bmx)\|^2
    \leq 2\|\nabla f_{\sigma}(\bmx)\|^2+\frac{L^2\sigma^2}{4}(6+d)^3.
\end{equation}
\end{lem}

Now, we prove Lemma~\ref{lem:lemma4_nesterov_modification}.

\begin{proof}[Proof of Lemma~\ref{lem:lemma4_nesterov_modification}]
The first result can be shown just by a modification of the proof of Lemma~\ref{lem:nesterov_lemma_4}. Observe
\begin{align}
\begin{split}
    \|\nabla f_{\sigma}(\bmx)\|^2
    &=\left\|\frac{2}{\pi^{\frac{d}{2}}\sigma}\int_{\mathbb{R}^d}\big(f(\bmx+\sigma \bmu)-f(\bmx)\big)\bmu e^{-\|\bmu\|^2}du\right\|^2\\
    &=\left\|\frac{2}{\pi^{\frac{d}{2}}\sigma}\int_{\mathbb{R}^d}\Big[\big(f(\bmx+\sigma \bmu)-f(\bmx)-\sigma\langle\nabla f(\bmx),\bmu\rangle\big)+\sigma\langle\nabla f(\bmx),\bmu\rangle\Big]\bmu e^{-\|\bmu\|^2}d\bmu\right\|^2\\
    &\leq\frac{8}{\pi^{d}\sigma^2}\int_{\mathbb{R}^d}\big(f(\bmx+\sigma \bmu)-f(\bmx)-\sigma\langle\nabla f(\bmx),\bmu\rangle\big)^2\|\bmu\|^2e^{-\|\bmu\|^2}d\bmu+2\|\nabla f(\bmx)\|^2\\
    &\leq\frac{2L^2\sigma^2}{\pi^d}\int_{\mathbb{R}^d}\|\bmu\|^6e^{-\|\bmu\|^2}d\bmu+2\|\nabla f(\bmx)\|^2\\
    &\leq\frac{L^2\sigma^2}{4}(6+d)^3+2\|\nabla f(\bmx)\|^2,
\end{split}
\end{align}
where the last inequality comes from Lemma 1 of \cite{nesterov2017random} which states that for $p\geq 2$
\begin{equation}
    \frac{1}{\pi^{\nicefrac{p}{2}}}\int_{\mathbb{R}^d}\|\bmu\|^pe^{-\|\bmu\|^2}\;d\bmu
    \leq\left(\frac{p+d}{2}\right)^{\nicefrac{p}{2}}.
\end{equation}

For the second part of the lemma, we will assume, without loss of generality, $\sigma\leq\tau$. By Lemma 3 from~\cite{nesterov2017random}, for any $\eta>0$,
\begin{equation}
    \|\nabla f_{\eta}(\bmx)-\nabla f(\bmx)\|
    \leq L\eta\left(\frac{3+d}{2}\right)^{\nicefrac{3}{2}}.
\end{equation}
By Lemma 2.8 of \cite{gsgd}, if $\eta=\sqrt{\tau^2-\sigma^2}$, then $(f_{\sigma})_{\eta}=f_{\tau}$.
So,
\begin{align}
    \|\nabla f_{\tau}(\bmx)-\nabla f_{\sigma}(\bmx)\|
    =\|\nabla (f_{\sigma})_{\eta}-\nabla f_{\sigma}(\bmx)\|
    \leq L\eta\left(\frac{3+d}{2}\right)^{\nicefrac{3}{2}}
    =L\sqrt{\tau^2-\sigma^2}\left(\frac{3+d}{2}\right)^{\nicefrac{3}{2}}.
\end{align}
\end{proof}

\section{Proof of GSmoothSGD Convergence}
\label{app:GSSGD}


The proof of convergence of GSmoothSGD adapts the standard proof of SGD (see, e.g., \cite{sgdconvergence}).
Based on the background results, we have two ways to change between $f_{\sigma}$ and $f_{\tau}$ (where $\tau$ could be 0): switch at the function level (Lemma~\ref{lem:differentsmoothingvalues}) or switch at the gradient level (Lemma~\ref{lem:difference_between_smoothed_gradients}).
Our proof of SGD carefully chooses where to change between smoothing values in order to keep the added smoothing bound as small as possible.

\begin{proof}[Proof of Theorem~\ref{thm:GSSGD}]
Since $E(\|\nabla f_k\|^2)\leq\varbound$, there exists $\varbound_t$ so that $E(\|\nabla f_{k_t,\sigma_{t+1}}\|^2)\leq\varbound_{t+1}$ (see Lemma~\ref{lem:lemma4_nesterov_modification}). 
We begin by repeating typical analysis done in the SGD proof: 
\begin{align}
\begin{split}
    f_{\sigma_{t+1}}(\bmx_{t+1})
    &\stackrel{L-\text{smooth}}{\leq}f_{\sigma_{t+1}}(\bmx_t)+\langle\nabla f_{\sigma_{t+1}}(\bmx_t),\bmx_{t+1}-\bmx_t\rangle+\frac{L}{2}\|\bmx_{t+1}-\bmx_t\|^2\\
    &=f_{\sigma_{t+1}}(\bmx_t)+\langle\nabla f_{\sigma_{t+1}}(\bmx_t),-\lrate\nabla f_{k_t,\sigma_{t+1}}(\bmx_t)\rangle+\frac{L\lrate^2}{2}\|\nabla f_{k_t,\sigma_{t+1}}(\bmx_t)\|^2\\
    &=f_{\sigma_{t+1}}(\bmx_t)-\lrate\langle\nabla f_{\sigma_{t+1}}(\bmx_t),\nabla f_{k_t,\sigma_{t+1}}(\bmx_t)\rangle+\frac{L\lrate^2}{2}\|\nabla f_{k_t,\sigma_{t+1}}(\bmx_t)\|^2
\end{split}
\end{align}
Taking the expectation and using the gradient bound gives
\begin{align}
\begin{split}
    E(f_{\sigma_{t+1}}(\bmx_{t+1}))
    &\leq E(f_{\sigma_{t+1}}(\bmx_t))-\lrate E(\langle\nabla f_{\sigma_{t+1}}(\bmx_t),\nabla f_{k_t,\sigma_{t+1}}(\bmx_t)\rangle)+\frac{L\lrate^2}{2}\varbound_{t+1}.
\end{split}
\end{align}
Repeating the regular SGD proof but for $f_{\sigma_{t+1}}$, we have 
\begin{equation}
    E\big(\langle\nabla f_{\sigma_{t+1}}(\bmx_t),\nabla f_{k_t,\sigma_{t+1}}(\bmx_t)\rangle\big)
    =E\left(\left\langle\nabla f_{\sigma_{t+1}}(\bmx_t),E(\nabla f_{k,\sigma_{t+1}}(\bmx_t)|k_t=k)\right\rangle\right)
    =E(\|\nabla f_{\sigma_{t+1}}(\bmx_t)\|^2).
\end{equation}
This means that
\begin{equation}
\label{eqn:pf_of_gssgd_iterates_decrease}
    E(f_{\sigma_{t+1}}(\bmx_{t+1}))\leq E(f_{\sigma_{t+1}}(\bmx_t))-\lrate E(\|\nabla f_{\sigma_{t+1}}(\bmx_t)\|^2)+\frac{L\lrate^2}{2}\varbound_{t+1}.
\end{equation}
Rearranging gives 
\begin{align}
\begin{split}
    \lrate E(\|\nabla f_{\sigma_{t+1}}(\bmx_t)\|^2)
    &\leq E(f_{\sigma_{t+1}}(\bmx_t)-f_{\sigma_{t+1}}(\bmx_{t+1}))+\frac{L\lrate^2}{2}\varbound_{t+1}\\
    &\leq E(f_{\sigma_{t}}(\bmx_t)-f_{\sigma_{t+1}}(\bmx_{t+1}))+\frac{L\lrate^2}{2}\varbound_{t+1}+\frac{Ld}{4}|\sigma_{t+1}^2-\sigma_t^2|.
\end{split}
\end{align}
Summing over the steps shows (where $\sigma_0=0$ for notational convenience)
\begin{align}
\begin{split}
    \lrate\sum_{t=0}^{T-1}E(\|\nabla f_{\sigma_{t+1}}(\bmx_t)\|^2)
    &\leq\sum_{t=0}^{T-1}E(f_{\sigma_{t}}(\bmx_t)-f_{\sigma_{t+1}}(\bmx_{t+1}))+\frac{L\lrate^2}{2}\sum_{t=0}^{T-1}\varbound_{t+1}+\frac{Ld}{4}\sum_{t=0}^{T-1}|\sigma_{t+1}^2-\sigma_t^2|\\
    &=E(f_{\sigma_0}(\bmx_0)-f_{\sigma_{T}}(\bmx_{T}))+\frac{L\lrate^2}{2}\sum_{t=0}^{T-1}\varbound_{t+1}+\frac{Ld}{4}\sum_{t=0}^{T-1}|\sigma_{t+1}^2-\sigma_t^2|
\end{split}
\end{align}
We know that $f^*\leq f_{\sigma_T}(\bmx_T)$, so 
\begin{align}
\begin{split}
    \lrate\sum_{t=0}^{T-1}E(\|\nabla f_{\sigma_{t+1}}(\bmx_t)\|^2)
    &\leq E(f(\bmx_0)-f^*)+\frac{L\lrate^2}{2}\sum_{t=1}^{T}\varbound_{t}+\frac{Ld}{4}\sum_{t=1}^{T}|\sigma_{t}^2-\sigma_{t-1}^2|\\
    &=f(\bmx_0)-f^*+\frac{L\lrate^2}{2}\sum_{t=1}^{T}\varbound_{t}+\frac{Ld}{4}\sum_{t=1}^{T}|\sigma_{t}^2-\sigma_{t-1}^2|.
\end{split}
\end{align}
Taking the average gives
\begin{align}
\begin{split}
    \sum_{t=0}^{T-1}E(\|\nabla f_{\sigma_{t+1}}(\bmx_t)\|^2)
    &\leq\frac{f(\bmx_0)-f^*}{T\lrate}+\frac{L\lrate}{2T}\sum_{t=1}^{T}\varbound_{t}+\frac{Ld}{4T\lrate}\sum_{t=1}^{T}|\sigma_{t}^2-\sigma_{t-1}^2|\\
    &\stackrel{\lrate<\frac{1}{L}}{\leq}\frac{f(\bmx_0)-f^*}{T\lrate}+\frac{1}{2T}\sum_{t=1}^{T}\varbound_{t}+\frac{d}{4T\lrate^2}\sum_{t=1}^{T}|\sigma_{t}^2-\sigma_{t-1}^2|.
\end{split}
\end{align}
From Lemma~\ref{lem:nesterov_lemma_4} and using $L<\frac{1}{\lrate}$, we have
\begin{align}
\begin{split}
    \sum_{t=0}^{T-1}E(\|\nabla f(\bmx_t)\|^2)
    &\leq\frac{1}{T}\sum_{t=0}^{T-1}\left(2E(\|\nabla f_{\sigma_{t+1}}(\bmx_t)\|^2)+\frac{L^2\sigma_{t+1}^2}{4}(6+d)^3\right)\\
    &\leq\frac{2(f(\bmx_0)-f^*)}{T\lrate}+\frac{1}{T}\sum_{t=1}^{T}\varbound_{t}+\frac{d}{2T\lrate^2}\sum_{t=1}^{T}|\sigma_{t}^2-\sigma_{t-1}^2|+\frac{(6+d)^3}{4T\lrate^2}\sum_{t=1}^{T}\sigma_{t}^2.
\end{split}
\end{align}
Finally, from Lemma~\ref{lem:lemma4_nesterov_modification}, since $E(\|\nabla f_k\|^2)\leq\varbound$, we have that 
\begin{align}
\begin{split}
    E(\|\nabla f_{k_t,\sigma_{t+1}}(\bmx_t)\|^2)
    &\leq 2E(\|\nabla f_{k_t}(\bmx_t)\|^2)+\frac{L^2(6+d)^3}{4}\sigma_{t+1}^2\\
    &\leq 2\varbound+\frac{L^2(6+d)^3}{4}\sigma_{t+1}^2\\
    &\leq 2\varbound+\frac{(6+d)^3}{4\lrate^2}\sigma_{t+1}^2.
\end{split}
\end{align}
Combining the previous two equations yields
\begin{align}
\begin{split}
    \frac{1}{T}\sum_{t=0}^{T-1}E(\|\nabla f(\bmx_t)\|^2)
    &\leq\frac{2(f(\bmx_0)-f^*)}{T\lrate}+\frac{1}{T}\sum_{t=1}^{T}\varbound_{t}+\frac{d}{2T\lrate^2}\sum_{t=1}^{T}|\sigma_{t}^2-\sigma_{t-1}^2|+\frac{(6+d)^3}{4T\lrate^2}\sum_{t=1}^{T}\sigma_{t}^2\\
    &=\frac{2(f(\bmx_0)-f^*)}{T\lrate}+2\varbound+\frac{1}{2T\lrate^2}\sum_{t=1}^{T}\Big(|\sigma_{t}^2-\sigma_{t-1}^2|d+\sigma_{t}^2(6+d)^3\Big).
\end{split}
\end{align}
\end{proof}

We now turn towards proving Proposition~\ref{prop:gssgd_improvement}.
We begin with a formal definition of the basin of attraction.

\begin{defn}
\label{defn:def_basin_of_attraction}
Let $\sigma\geq 0$ and suppose $f_{\sigma}$ has a unique minimizer $\sstar$.
Let $(\sigma_n)_{n=1}^{\infty}\subseteq[\sigma,\infty)$ with $\sigma_t\to\sigma$, define
\begin{align}
    \mathcal{B}\big(\sigma,(\sigma_t)_{t=1}^{\infty};(k_t)_{t=1}^{\infty}\big) &= \{\bmx\in\mathbb{R}^d:\bmx_{t}\to\sstar\text{ where }\bmx_{t+1}=\bmx_{t}-\eta\nabla f_{k_t,\sigma_{t+1}}(\bmx_t)\text{ and }\bmx_0=\bmx\}\\
    \mathcal{B}\big(\sigma,(\sigma_t)_{t=1}^{\infty}\big) &=\bigcap \left\{\mathcal{B}\big(\sigma,(\sigma_t)_{t=1}^{\infty};(k_t)_{t=1}^{\infty}\big):k_t\sim\text{Unif}([K])\right\}\\
    \mathcal{B}(\sigma)&=\bigcup \left\{\mathcal{B}\big(\sigma,(\sigma_t)_{t=1}^{\infty}\big):(\sigma_n)_{n=1}^{\infty}\subseteq[\sigma,\infty)\text{ with }\sigma_t\to\sigma\right\}
\end{align}
We call $\mathcal{B}(\sigma)$ the basin of attraction of $\sstar$.
\end{defn}

While we assume that there are unique minimizers, the proof can be generalized to the setting where the minimizers form a connected, bounded set. Regardless, for simplicity, we assume uniqueness of the minimizers.

The proof of Proposition~\ref{prop:gssgd_improvement} chains together the iterates from traveling though $\sigma_1, \sigma_2, \dots, \sigma_N$ by running GSmoothSGD until the iterate is close enough to $\bmx_{\sigma_n}^{\star}$ so that it is also in the basin of attraction of $\bmx_{\sigma_{n+1}}^{\star}$.
The following proposition provides the proof of this chaining idea.
In particular, Proposition~\ref{prop:gssgd_improvement} is actually a corollary of the following result, which provides how we transition from $\mathcal{B}(\sigma_n)$ to $\mathcal{B}(\sigma_{n+1})$.

\begin{prop}
Assume $f_{\sigma}$ has a unique minimizer $\sstar$ for $0\leq \sigma\leq\Sigma$.
Let $0\leq\tau<\sigma\leq\Sigma$ be such that
\begin{equation}
    \bm{x}^{\star}_{\sigma}\in\mathcal{B}(\tau).
\end{equation}
Suppose further that there is an open set around $\sstar$ contained in $\mathcal{B}(\tau)$.
Then $\mathcal{B}(\sigma)\subseteq\mathcal{B}(\tau)$.
\end{prop}

\begin{proof}
Let $\bmx\in\mathcal{B}(\sigma)$, so there exists $(\alpha_t)_{t=1}^{\infty}\subseteq[\sigma,\infty)$ such that
\begin{equation}
    \bmx_t=\bmx_{t-1}-\eta\nabla f_{k_t,\alpha_{t+1}}(\bmx_{t-1})
\end{equation}
with $\bmx_0=\bmx$ and $k_t\sim\text{Unif}([K])$ satisfies $\bmx_t\to\sstar$.
Let $\bm{k} = (k_1,k_2,...)$ be a realization of samples where $k_t\sim\text{Unif}([K])$.
By assumption, there exists $R>0$ such that
\begin{equation}
    B(\sstar,R)\subseteq\mathcal{B}(\tau).
\end{equation}
Since $\bmx\in\mathcal{B}(\sigma_n)$, there exists $(\alpha_t)_{t=1}^{\infty}\subseteq[\sigma,\infty)$ so that if
\begin{equation}
    \bmx_t=\bmx_{t-1}-\eta\nabla f_{k_t,\alpha_{t+1}}(\bmx_{t-1})
\end{equation}
with $\bmx_0=\bmx$ then $\bmx_t\to\sstar$.
As such, there exists $N\in\mathbb{N}$ so that $\|\bmx_N-\sstar\|<R$.
Hence,
\begin{equation}
    \bmx_N\in B(\sstar,R)\subseteq\mathcal{B}(\tau).
\end{equation}
This means that there exists $(\beta_t)_{t=1}^{\infty}\subseteq[\tau,\infty)$ with $\beta_t\to\tau$ such that if
\begin{equation}
    \bmy_t=\bmy_{t-1}-\eta\nabla f_{k_{t+N},\beta_{t+1}}(\bmy_{t-1})
\end{equation}
with $\bmy_0=\bmx_N$ then $\bmy_t\to\bmx_{\tau}^{\star}$.
Define
\begin{equation}
    \gamma_t=\left\{\begin{array}{ll}
        \alpha_t\qquad&\text{for }t=1,...,N-1\\
        \beta_t&\text{for }t\geq N
    \end{array}\right.
\end{equation}
and
\begin{equation}
    \bmx_t=\bmx_{t-1}-\eta\nabla f_{k_t,\gamma_{t+1}}(\bmx_{t-1})
\end{equation}
with $\bmx_0=\bmx$.
Then $\bmx_t\to\bmx_{\tau}^{\star}$ and $(\gamma_t)_{t=1}^{\infty}\subseteq[\tau,\infty)$ with $\gamma_t\to\tau$.
Therefore,
\begin{equation}
    x\in\mathcal{B}(\tau,(\gamma_t)_{t=1}^{\infty};\bm{k})\subseteq\mathcal{B}(\tau).
\end{equation}
Since this can be done for any $\bm{k}$, the proof is complete.
\end{proof}

Practically, this does not provide a sequence of smoothing parameters because $(\beta_t)_{t=1}^{\infty}$ (and hence $(\gamma_t)_{t=1}^{\infty}$) depends on $\bm{k}$.
Next, we state a more general corollary of the previous proposition compared to Proposition~\ref{prop:gssgd_improvement}.
In fact, Proposition~\ref{prop:gssgd_improvement} is just a particular example of the following corollary that leverages the convexity of $f_{\Sigma}$ to increase the initial basin of attraction.

\begin{coro}
Suppose there exists an increasing sequence, $(\sigma_n)_{n=1}^{N}$, starting at 0 such that for $n=2,\dots,N$, $f_{\sigma_n}$ has a unique minimizer $\bmx_{\sigma_n}^{\star}$ and there exists $R_n>0$ so that
\begin{equation}
    B(\bm{x}^{\star}_{\sigma_{n}},R_n)\subseteq\mathcal{B}(\sigma_{n-1}).
\end{equation}
Then $\mathcal{B}(\Sigma)\subseteq\mathcal{B}(0)$.
\end{coro}

\begin{proof}
The previous proposition shows that $\mathcal{B}(\sigma_{n})\subseteq\mathcal{B}(\sigma_{n-1})$ for each $n=2,...,N$.
Therefore,
\begin{equation}
    \mathcal{B}(\Sigma)
    =\mathcal{B}(\sigma_1)
    \subseteq\mathcal{B}(\sigma_2)
    \subseteq\cdots
    \subseteq\mathcal{B}(\sigma_N)
    =\mathcal{B}(0).
\end{equation}
\end{proof}

Finally, we prove Proposition~\ref{prop:gssgd_improvement}.

\begin{proof}[Proof of Proposition~\ref{prop:gssgd_improvement}]
Since each $f_{k,\Sigma}$ is convex and they share a minimizer, $f_{\Sigma}$ is also convex with the same minimizer.
As such, $\mathcal{B}(\Sigma)=\mathbb{R}^d$.
By the previous corollary, we have
\begin{equation}
    \mathbb{R}^d
    =\mathcal{B}(\Sigma)
    \subseteq\mathcal{B}(0)
    \subseteq\mathbb{R}^d.
\end{equation}
\end{proof}

\section{Proof of Convergence of GSmoothAdam}
\label{app:proofs_gsmoothadam}

The proof that GSmoothAdam converges almost surely adapts the proof that Adam does from~\cite{he2023convergence}.\
In order to prove Theorem~\ref{thm:gsadam}, we needed to replicate almost the entirety of two of their other key results (Theorems 4 and 9). For this section, we adopt $\mathcal{F}_t$ as the notation for the sigma algebra generated by $\bmx_0,...,\bmx_t$.

As the first step to proving Theorem~\ref{thm:gsadam}, we provide the following smoothed version of Lemma 19 from~\cite{he2023convergence}.
This proof relies on Lemmas 16, 17 and 18 from~\cite{he2023convergence}, which do not need to be modified for smoothing.
Lemma 16 provides a uniform bound on $\bmm_t$, $\bmv_t$, and the Adam update to $\bmx_t$, where the subtle difference between the smoothed and unsmoothed results are in this bound.
In particular, the $M$ is the original statement, which bounds $\|\nabla f(\bmx)\|^2$, becomes
\begin{equation}
    M:=2\lambda+\frac{L^2(6+d)^3}{4}\max_t\sigma_t^2,
\end{equation}
using Lemma~\ref{lem:difference_between_smoothed_gradients} to bound $\|\nabla f_{\sigma}(\bmx)\|^2$.
Since we assume that $\sigma_t\to 0$, the sequence $(\sigma_t)$ is bounded and hence $M\in\mathbb{R}$.
We use this definition of $M$ throughout this section.
Lemmas 17 and 18 provide technical bounds needed in the proof of Lemma 19 that do not need to be adapted because $\sigma$ is fixed in both of them.

\begin{lem}[Lemma 19 \cite{he2023convergence}]
\label{lem:adam_lemma19}
Let $(\bmx_t)_{t\geq 1}$, $(\bmm_t)_{t\geq 1}$, and $(\bmv_t)_{t\geq 1}$ be the sequences generated by GSmoothAdam.
Let $f$ be $L$-smooth and $f^*$ denote the minimum of $f$.
Assume $E(f_k(\bmx))=f(\bmx)$ and $E(\nabla f_k(\bmx))=\nabla f(\bmx)$.
Suppose that $E(\|\nabla f_k\|^2)\leq\lambda$ for any $k\in[K]$.
Then for all $t\geq 1$, we have
\begin{equation}
    E\left(\left\langle
        \nabla f_{\sigma_{t+1}}(\bmx_t),
        \frac{\bmm_{t+1}}{\sqrt{\bmv_{t+1}+\epsilon}}
    \right\rangle\right)
    \geq\sum_{i=1}^{t+1}\prod_{j=i+1}^{t+1}\beta_jD_i
    +\sum_{i=1}^{t+1}\prod_{j=i+1}^{t+1}\beta_j\widetilde{D}_i\sqrt{|\sigma_{i}^2-\sigma_{i+1}^2|},
\end{equation}
where
\begin{equation}
    D_i
    =-L\beta\eta_{i}E\left(\left\|\frac{\bmm_{i}}{\sqrt{\bmv_{i}+\epsilon}}\right\|^2\right)
    +\frac{1-\beta_{i+1}}{\sqrt{M^2+\epsilon}}E(\|\nabla f_{\sigma_{i+1}}(\bmx_i)\|^2)
    -\frac{\sqrt{d}M^4}{\epsilon^{\nicefrac{3}{2}}}(1-\theta_{i+1})
\end{equation}
and
\begin{equation}
    \widetilde{D}_i=-\frac{\beta_i LM}{\sqrt{\epsilon}}\left(\frac{3+d}{2}\right)^{\nicefrac{3}{2}}.
\end{equation}
\end{lem}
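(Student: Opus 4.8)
The plan is to mirror the proof of Lemma 19 in \cite{he2023convergence}, systematically replacing the true gradient $\nabla f$ by the smoothed gradient $\nabla f_{\sigma}$ and accounting for the additional error incurred because the smoothing level changes between consecutive iterations. Since Lemmas 16--18 of \cite{he2023convergence} treat $\sigma$ as fixed within a single step, they transfer verbatim once $M$ is redefined as $M:=2\lambda+\frac{L^2(6+d)^3}{4}\max_t\sigma_t^2$; in particular Lemma 16 still supplies the uniform bounds $\sqrt{\epsilon}\le\sqrt{\bmv_{t}+\epsilon}\le\sqrt{M^2+\epsilon}$ and $\big\|\bmm_t/\sqrt{\bmv_t+\epsilon}\big\|\le M/\sqrt{\epsilon}$. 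The only genuinely new ingredient is Lemma~\ref{lem:difference_between_smoothed_gradients}(b), which quantifies the gap between two smoothing levels at a fixed point.

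First I would unwind the momentum recursion from Algorithm~\ref{alg:gsadam} and split the inner product into a \emph{fresh-gradient} piece carrying the newly sampled $\nabla f_{k_t,\sigma_t}(\bmx_{t-1})$ and a \emph{carryover} piece involving the previous momentum. For the fresh piece I would condition on $\mathcal{F}_{t-1}$ and use $E(\nabla f_{k_t,\sigma_t}(\bmx_{t-1})\mid\mathcal{F}_{t-1})=\nabla f_{\sigma_t}(\bmx_{t-1})$ together with the lower bound on the denominator; after bounding the denominator by $\sqrt{M^2+\epsilon}$ this yields the positive contribution $\frac{1-\beta_{i+1}}{\sqrt{M^2+\epsilon}}E(\|\nabla f_{\sigma_{i+1}}(\bmx_i)\|^2)$ appearing in $D_i$.

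The carryover piece is where the three error terms are generated. Relating $\nabla f_{\sigma_{t+1}}(\bmx_t)$ to the gradient that appeared at the previous step requires absorbing three discrepancies: (i) the change of evaluation point $\bmx_{t}$ versus $\bmx_{t-1}$, controlled by $L$-smoothness together with the update rule $\bmx_t=\bmx_{t-1}-\eta_t\bmm_{t+1}/\sqrt{\bmv_t+\epsilon}$, which produces the $-L\beta\eta_i E(\|\bmm_i/\sqrt{\bmv_i+\epsilon}\|^2)$ term; (ii) the change of the adaptive denominator from $\sqrt{\bmv_t+\epsilon}$ to $\sqrt{\bmv_{t+1}+\epsilon}$, controlled through the $(1-\theta_{t+1})$ bounds of Lemmas 17--18, which produces $-\frac{\sqrt{d}M^4}{\epsilon^{\nicefrac{3}{2}}}(1-\theta_{i+1})$; and (iii) the change of smoothing level from $\sigma_t$ to $\sigma_{t+1}$. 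For (iii) I would invoke Lemma~\ref{lem:difference_between_smoothed_gradients}(b) to write $\|\nabla f_{\sigma_{t+1}}(\bmx_t)-\nabla f_{\sigma_t}(\bmx_t)\|\le L\big(\frac{3+d}{2}\big)^{\nicefrac{3}{2}}\sqrt{|\sigma_t^2-\sigma_{t+1}^2|}$ and combine it with $\|\bmm_t/\sqrt{\bmv_t+\epsilon}\|\le M/\sqrt{\epsilon}$ via Cauchy--Schwarz, giving exactly the $\widetilde{D}_i\sqrt{|\sigma_i^2-\sigma_{i+1}^2|}$ contribution with $\widetilde{D}_i=-\frac{\beta_i LM}{\sqrt{\epsilon}}\big(\frac{3+d}{2}\big)^{\nicefrac{3}{2}}$.

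Assembling (i)--(iii) produces a one-step inequality of the form $\Phi_t\ge\beta_{t+1}\Phi_{t-1}+D_{t+1}+\widetilde{D}_{t+1}\sqrt{|\sigma_{t+1}^2-\sigma_{t+2}^2|}$, where $\Phi_t$ denotes the expected inner product in the statement. Iterating this recursion (equivalently, an induction on $t$) and collecting the accumulated discount factors $\prod_{j=i+1}^{t+1}\beta_j$ yields the two claimed sums. The main obstacle is the bookkeeping: the momentum recursion shifts indices, so care is needed to attach each smoothing-difference error to the correct discount factor and to ensure that the telescoped sum reproduces the stated ranges and the precise constants in $D_i$ and $\widetilde{D}_i$ rather than merely an inequality of the same shape.
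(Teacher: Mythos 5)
Your proposal follows essentially the same route as the paper's proof: redefine $M$ via Lemma~\ref{lem:difference_between_smoothed_gradients}, reuse Lemmas 16--18 of \cite{he2023convergence} unchanged, split the inner product into the unbiased fresh-gradient piece, the evaluation-point change (handled by $L$-smoothness of $f_{\sigma_t}$), the denominator change (Lemma 18, giving the $(1-\theta_{i+1})$ term), and the smoothing-level change (Lemma~\ref{lem:difference_between_smoothed_gradients}(b) plus Cauchy--Schwarz and the momentum bound, giving $\widetilde{D}_i$), then iterate the one-step recursion $\Theta_{t+1}\geq\beta_{t+1}\Theta_t+D_t+\widetilde{D}_t\sqrt{|\sigma_t^2-\sigma_{t+1}^2|}$ from $\Theta_0=0$. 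The only cosmetic difference is the order of the decomposition (the paper peels off the denominator correction first, then applies the momentum recursion), which does not change the substance of the argument.
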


\begin{proof}
We repeat the proof from \cite{he2023convergence}, but include the necessary changes for smoothing.
Let
\begin{align}
\label{eqn:lemma_19_0}
    \Theta_{t+1}
    &=E\left(\left\langle
        \nabla f_{\sigma_{t+1}}(\bmx_t),
        \frac{\bmm_{t+1}}{\sqrt{\bmv_{t+1}+\epsilon}}
    \right\rangle\right)\\
    &=\underbrace{
        E\left(\left\langle
            \nabla f_{\sigma_{t+1}}(\bmx_t),
            \frac{\bmm_{t+1}}{\sqrt{\bmv_{t}+\epsilon}}
        \right\rangle\right)
    }_{I}
    +\underbrace{
        E\left(\left\langle
            \nabla f_{\sigma_{t+1}}(\bmx_t),
            \frac{\bmm_{t+1}}{\sqrt{\bmv_{t+1}+\epsilon}}-\frac{\bmm_{t+1}}{\sqrt{\bmv_{t}+\epsilon}}
        \right\rangle\right)
    }_{II}.
\end{align}
Focusing on $I$, we have
\begin{align}
\label{eqn:lemma_19_1}
    &E\left(\left.
        \left\langle
            \nabla f_{\sigma_{t+1}}(\bmx_t),
            \frac{\bmm_{t+1}}{\sqrt{\bmv_{t+1}+\epsilon}}
        \right\rangle
    \right|\mathcal{F}_t\right)\\
    &=E\left(\left.
        \left\langle
            \nabla f_{\sigma_{t+1}}(\bmx_t),
            \frac{\beta_{t+1}\bmm_{t}+(1-\beta_{t+1})\nabla f_{k_t,\sigma_{t+1}}(\bmx_t)}{\sqrt{\bmv_{t+1}+\epsilon}}
        \right\rangle
    \right|\mathcal{F}_t\right)\\
    &=\beta_{t+1}\left\langle
        \nabla f_{\sigma_{t+1}}(\bmx_t),
        \frac{\bmm_{t}}{\sqrt{\bmv_{t+1}+\epsilon}}
    \right\rangle
    +(1-\beta_{t+1})\left\langle
        \nabla f_{\sigma_{t+1}}(\bmx_t),
        \frac{\nabla f_{\sigma_{t+1}}(\bmx_t)}{\sqrt{\bmv_{t+1}+\epsilon}}
    \right\rangle\\
    &=\beta_{t+1}\left\langle
        \nabla f_{\sigma_{t+1}}(\bmx_t),
        \frac{\bmm_{t}}{\sqrt{\bmv_{t+1}+\epsilon}}
    \right\rangle
    +(1-\beta_{t+1})\left\|
        \frac{(\nabla f_{\sigma_{t+1}}(\bmx_t))^2}{\sqrt{\bmv_{t+1}+\epsilon}}
    \right\|_1\\
    &=\beta_{t+1}\left\langle
        \nabla f_{\sigma_{t}}(\bmx_{t-1}),
        \frac{\bmm_{t}}{\sqrt{\bmv_{t+1}+\epsilon}}
    \right\rangle
    +(1-\beta_{t+1})\left\|
        \frac{(\nabla f_{\sigma_{t+1}}(\bmx_t))^2}{\sqrt{\bmv_{t+1}+\epsilon}}
    \right\|_1\\
    &\qquad\qquad\qquad-\beta_{t+1}\left\langle
        \nabla f_{\sigma_{t+1}}(\bmx_{t})-\nabla f_{\sigma_{t}}(\bmx_{t-1}),
        \frac{\bmm_{t}}{\sqrt{\bmv_{t+1}+\epsilon}}
    \right\rangle,
\end{align}
where $(\nabla f_{\sigma_{t+1}}(\bmx_t))^2$ is done coordinate-wise.
Focusing on the last term of the previous equation, we have
\begin{align}
    -\beta_{t+1}\left\langle
        \nabla f_{\sigma_{t+1}}(\bmx_{t})-\nabla f_{\sigma_{t}}(\bmx_{t-1}),
        \frac{\bmm_{t}}{\sqrt{\bmv_{t+1}+\epsilon}}
    \right\rangle
    &\geq-\beta
    \|\nabla f_{\sigma_{t+1}}(\bmx_{t})-\nabla f_{\sigma_{t}}(\bmx_{t-1})\|
    \left\|\frac{\bmm_{t}}{\sqrt{\bmv_{t+1}+\epsilon}}\right\|.
\end{align}
Note that
\begin{align}
    \|\nabla f_{\sigma_{t+1}}(\bmx_{t})-\nabla f_{\sigma_{t}}(\bmx_{t-1})\|
    &\leq
        \|\nabla f_{\sigma_{t}}(\bmx_{t})-\nabla f_{\sigma_{t}}(\bmx_{t-1})\|
        +\|\nabla f_{\sigma_{t+1}}(\bmx_{t})-\nabla f_{\sigma_{t}}(\bmx_{t})\|\\
    &\leq
        L\|\bmx_t-\bmx_{t-1}\|
        +L\left(\frac{3+d}{2}\right)^{\nicefrac{3}{2}}\sqrt{|\sigma_t^2-\sigma_{t+1}^2|}
\end{align}
using Lemma~\ref{lem:difference_between_smoothed_gradients} and the fact that $f_{\sigma_t}$ is L-smooth.
So,
\begin{align}
    &-\beta_{t+1}\left\langle
        \nabla f_{\sigma_{t+1}}(\bmx_{t})-\nabla f_{\sigma_{t}}(\bmx_{t-1}),
        \frac{\bmm_{t}}{\sqrt{\bmv_{t+1}+\epsilon}}
    \right\rangle\\
    &\geq-\beta
    L\|\bmx_t-\bmx_{t-1}\|
    \left\|\frac{\bmm_{t}}{\sqrt{\bmv_{t+1}+\epsilon}}\right\|
    -\beta
    L\left(\frac{3+d}{2}\right)^{\nicefrac{3}{2}}\sqrt{|\sigma_t^2-\sigma_{t+1}^2|}
    \left\|\frac{\bmm_{t}}{\sqrt{\bmv_{t+1}+\epsilon}}\right\|\\
    &=-\beta
    L\eta_{t}
    \left\|\frac{\bmm_{t}}{\sqrt{\bmv_{t+1}+\epsilon}}\right\|^2
    -\beta
    L\left(\frac{3+d}{2}\right)^{\nicefrac{3}{2}}\sqrt{|\sigma_t^2-\sigma_{t+1}^2|}
    \left\|\frac{\bmm_{t}}{\sqrt{\bmv_{t+1}+\epsilon}}\right\|.
\end{align}
Combining this with where we were in \eqref{eqn:lemma_19_1}, we have
\begin{align}
    &E\left(\left.
        \left\langle
            \nabla f_{\sigma_{t+1}}(\bmx_t),
            \frac{\bmm_{t+1}}{\sqrt{\bmv_{t+1}+\epsilon}}
        \right\rangle
    \right|\mathcal{F}_t\right)\\
    &\geq\beta_{t+1}\left\langle
        \nabla f_{\sigma_{t}}(\bmx_{t-1}),
        \frac{\bmm_{t}}{\sqrt{\bmv_{t+1}+\epsilon}}
    \right\rangle
    +(1-\beta_{t+1})\left\|
        \frac{(\nabla f_{\sigma_{t+1}}(\bmx_t))^2}{\sqrt{\bmv_{t+1}+\epsilon}}
    \right\|_1\\
    &\qquad\qquad\qquad-\beta
        L\eta_{t}
        \left\|\frac{\bmm_{t}}{\sqrt{\bmv_{t+1}+\epsilon}}\right\|^2
        -\beta_{t+1}
        L\left(\frac{3+d}{2}\right)^{\nicefrac{3}{2}}\sqrt{|\sigma_t^2-\sigma_{t+1}^2|}
        \left\|\frac{\bmm_{t}}{\sqrt{\bmv_{t+1}+\epsilon}}\right\|.
\end{align}
So,
\begin{align}
    I
    &=E\left(E\left(\left.
        \left\langle
            \nabla f_{\sigma_{t+1}}(\bmx_t),
            \frac{\bmm_{t+1}}{\sqrt{\bmv_{t+1}+\epsilon}}
        \right\rangle
    \right|\mathcal{F}_t\right)\right)\\
    &\geq
        -\beta_{t+1}E\left(\left\langle
            \nabla f_{\sigma_{t}}(\bmx_{t-1}),
            \frac{\bmm_{t}}{\sqrt{\bmv_{t+1}+\epsilon}}
        \right\rangle\right)
        +(1-\beta_{t+1})E\left(\left\|
            \frac{(\nabla f_{\sigma_{t+1}}(\bmx_t))^2}{\sqrt{\bmv_{t+1}+\epsilon}}
        \right\|_1\right)\\
    &\qquad\qquad\qquad
        -\beta L\eta_{t}E\left(\left\|\frac{\bmm_{t}}{\sqrt{\bmv_{t+1}+\epsilon}}\right\|^2\right)\\
    &\qquad\qquad\qquad
        -\beta_{t+1}L\left(\frac{3+d}{2}\right)^{\nicefrac{3}{2}}\sqrt{|\sigma_t^2-\sigma_{t+1}^2|}
        E\left(\left\|\frac{\bmm_{t}}{\sqrt{\bmv_{t+1}+\epsilon}}\right\|\right)\\
    &\geq
        \beta_{t+1}\Theta_{t}
        +\frac{1-\beta_{t+1}}{\sqrt{M^2+\epsilon}}E\left(\left\|
            \nabla f_{\sigma_{t+1}}(\bmx_t)
        \right\|^2\right)
        -\beta L\eta_{t}E\left(\left\|\frac{\bmm_{t}}{\sqrt{\bmv_{t+1}+\epsilon}}\right\|^2\right)\\
    &\qquad\qquad\qquad
        -\frac{\beta_{t+1}LM}{\sqrt{\epsilon}}\left(\frac{3+d}{2}\right)^{\nicefrac{3}{2}}\sqrt{|\sigma_t^2-\sigma_{t+1}^2|}
\end{align}
where the last inequality used Lemmas 16 and 17 from \cite{he2023convergence}.

Now focusing on $II$ in \eqref{eqn:lemma_19_0},
\begin{align}
    II
    &=E\left(\left\langle
        \nabla f_{\sigma_{t+1}}(\bmx_t),
        \frac{\bmm_{t+1}}{\sqrt{\bmv_{t+1}+\epsilon}}-\frac{\bmm_{t+1}}{\sqrt{\bmv_{t}+\epsilon}}
    \right\rangle\right)\\
    &=-E\left(\left\langle
        \nabla f_{\sigma_{t+1}}(\bmx_t),
        \frac{\bmm_{t+1}}{\sqrt{\bmv_{t}+\epsilon}}-\frac{\bmm_{t+1}}{\sqrt{\bmv_{t+1}+\epsilon}}
    \right\rangle\right)\\
    &\geq E\left(
        \|\nabla f_{\sigma_{t+1}}(\bmx_t)\|
        \left\|\frac{\bmm_{t+1}}{\sqrt{\bmv_{t}+\epsilon}}-\frac{\bmm_{t+1}}{\sqrt{\bmv_{t+1}+\epsilon}}\right\|
    \right)\\
    &\geq-\frac{\sqrt{d}M^4}{\epsilon^{\frac{3}{2}}}(1-\theta_t)
\end{align}
by Lemma 18 from \cite{he2023convergence}.

Therefore, using the definitions of $D_t$ and $\widetilde{D}_t$ from the statement of the lemma,
\begin{align}
    \Theta_{t+1}
    &\geq\beta_{t+1}\Theta_{t}
        +\frac{1-\beta_{t+1}}{\sqrt{M^2+\epsilon}}E\left(\left\|
            \nabla f_{\sigma_{t+1}}(\bmx_t)
        \right\|^2\right)
        -\beta L\eta_{t}E\left(\left\|\frac{\bmm_{t}}{\sqrt{\bmv_{t+1}+\epsilon}}\right\|^2\right)\\
    &\qquad\qquad\qquad
        -\frac{\sqrt{d}M^4}{\epsilon^{\frac{3}{2}}}(1-\theta_{t+1})
        -\frac{\beta_{t+1}LM}{\sqrt{\epsilon}}\left(\frac{3+d}{2}\right)^{\nicefrac{3}{2}}\sqrt{|\sigma_t^2-\sigma_{t+1}^2|}\\
    &=\beta_{t+1}\Theta_{t}
        +D_t
        +\widetilde{D}_t\sqrt{|\sigma_t^2-\sigma_{t+1}^2|}\\
\end{align}
Recursively applying this inequality, we see that
\begin{align}
    \Theta_{t+1}
    &\geq\prod_{i=1}^{t+1}\beta_i\Theta_0
        +\sum_{i=1}^{t+1}\prod_{j=i+1}^{t+1}\beta_jD_i
        +\sum_{i=1}^{t+1}\prod_{j=i+1}^{t+1}\beta_j\widetilde{D}_i\sqrt{|\sigma_i^2-\sigma_{i+1}^2|}\\
    &=
        \sum_{i=1}^{t+1}\prod_{j=i+1}^{t+1}\beta_jD_i
        +\sum_{i=1}^{t+1}\prod_{j=i+1}^{t+1}\beta_j\widetilde{D}_i\sqrt{|\sigma_i^2-\sigma_{i+1}^2|}
\end{align}
since $\Theta_0=0$ and we used $\prod_{j=t+2}^{t+1}\beta_{t}=1$ for notational convenience.
\end{proof}

The next result, which is the GSmoothAdam analogue of Theorem~\ref{thm:GSSGD} for GmoothSGD, shows that the minimum gradient of the iterates converges to 0.

\begin{thm}[Theorem 4 from \cite{he2023convergence}]
\label{thm:adam_theorem4}
Let $f$ be $L$-smooth and $f^*$ denote the minimum of $f$.
Assume $E(f_k(\bmx))=f(\bmx)$ and $E(\nabla f_k(\bmx))=\nabla f(\bmx)$.
Suppose that $E(\|\nabla f_k\|^2)\leq\lambda$ for any $k\in[K]$ and $(\alpha_t)_{t\geq 1}$ is a non-increasing real sequence.
Let $(\bmx_t)_{t\geq 1}$ be generated by GSmoothAdam with $\eta_t=\Theta(\alpha_t)$ (i.e., there exist $C_0,\widetilde{C_0}>0$ such that $C_0\alpha_t\leq\eta_t\leq\widetilde{C_0}\alpha_t$). Then for $T\geq 1$, we have
\begin{multline}
    \min_{1\leq t\leq T}\Big(E(\|\nabla f(\bmx_t)\|^2)\Big)
    \sum_{t=1}^{T}\eta_t
    \leq
        2C_1(f_{\sigma_1}(\bmx_1)-f^*)
        +2C_2\sum_{t=1}^{T}\eta_t(1-\theta_t)
        +2C_3\sum_{t=1}^{T}\eta_t^2\\
        +\frac{Ld\sqrt{M^2+\epsilon}}{2(1-\beta)}\sum_{t=1}^{T}|\sigma_t^2-\sigma_{t+1}^2|
        +\frac{L^2(6+d)^3}{4}\sum_{t=1}^{T}\eta_t\sigma_{t+1}^2,
\end{multline}
where
\begin{equation}
    C_1=\frac{\sqrt{M^2+\epsilon}}{1-\beta},\;
    C_2=\frac{\widetilde{C_0}\sqrt{d}M^4\sqrt{M^2+\epsilon}}{\epsilon^{\nicefrac{3}{2}}C_0(1-\beta)^2},\;
    C_3=\frac{2\widetilde{C_0}^2LM^2\sqrt{M^2+\epsilon}}{\epsilon C_0^2(1-\beta)^2}.
\end{equation}
\end{thm}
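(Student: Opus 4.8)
The plan is to mimic the proof of Theorem 4 in \cite{he2023convergence}, using the smoothed alignment bound of Lemma~\ref{lem:adam_lemma19} as the engine and feeding the smoothing corrections in through Lemmas~\ref{lem:differentsmoothingvalues} and~\ref{lem:difference_between_smoothed_gradients}. Since $f$ is $L$-smooth, every $f_{\sigma_{t+1}}$ is $L$-smooth by Lemma~\ref{lem:fsigmalsmoothandconvex}, so I would first apply the $L$-smoothness descent inequality to $f_{\sigma_{t+1}}$ along the GSmoothAdam step $\bmx_{t+1}-\bmx_t$. After taking expectations, its linear part is (up to a step-size factor and the paper's momentum indexing) the alignment quantity $\Theta_{t+1}=E\langle\nabla f_{\sigma_{t+1}}(\bmx_t),\bmm_{t+1}/\sqrt{\bmv_{t+1}+\epsilon}\rangle$ that Lemma~\ref{lem:adam_lemma19} bounds below, while its quadratic part is a constant multiple of $\eta_{t+1}^2\,E\|\bmm_{t+1}/\sqrt{\bmv_{t+1}+\epsilon}\|^2$. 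The uniform bound on the normalized momentum (Lemma 16 of \cite{he2023convergence}, with $M$ redefined as in the text) controls this remainder by a multiple of $\eta_{t+1}^2$, which will feed the $C_3\sum\eta_t^2$ term.

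Next I would sum over $t=1,\dots,T$. The function values telescope, but because the smoothing level changes each step I would trade $f_{\sigma_{t+1}}(\bmx_t)$ for $f_{\sigma_t}(\bmx_t)$ at cost $\tfrac{Ld}{4}|\sigma_{t+1}^2-\sigma_t^2|$ via Lemma~\ref{lem:differentsmoothingvalues}; this is the source of the $\sum|\sigma_t^2-\sigma_{t+1}^2|$ term, and the endpoint is bounded below by $f^*$ using $f_\sigma\ge f^*$ from Lemma~\ref{lem:fsigmagreaterthanfnonconvex}, yielding $f_{\sigma_1}(\bmx_1)-f^*$. On the linear side I would insert the recursive lower bound for $\Theta_{t+1}$ from Lemma~\ref{lem:adam_lemma19}, turning $\sum_t\eta_t\Theta_t$ into a double sum whose inner terms carry geometric weights $\prod_{j=i+1}^{t+1}\beta_j\le\beta^{\,t+1-i}$. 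Swapping the order of summation and using $\eta_t=\Theta(\alpha_t)$ with $(\alpha_t)$ non-increasing (so $\eta_t\le(\widetilde{C_0}/C_0)\eta_i$ for $t\ge i$) collapses the geometric factors; this is precisely where the $1/(1-\beta)$ and $\widetilde{C_0}/C_0$ ratios in $C_1,C_2,C_3$ are generated. The $D_i$ summands then contribute the positive gradient term $\tfrac{1-\beta}{\sqrt{M^2+\epsilon}}\sum_t\eta_t E\|\nabla f_{\sigma_{t+1}}(\bmx_t)\|^2$, the momentum-norm and $(1-\theta_t)$ terms, alongside the smoothing corrections $\widetilde D_i\sqrt{|\sigma_i^2-\sigma_{i+1}^2|}$.

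Finally I would isolate the gradient term, multiply through by $C_1=\sqrt{M^2+\epsilon}/(1-\beta)$, and convert $\|\nabla f_{\sigma_{t+1}}(\bmx_t)\|^2$ back to $\|\nabla f(\bmx_t)\|^2$ using Lemma 4 of \cite{nesterov2017random} in its original direction, $\|\nabla f(\bmx)\|^2\le 2\|\nabla f_\sigma(\bmx)\|^2+\tfrac14 L^2\sigma^2(6+d)^3$. The factor $2$ accounts for the coefficients $2C_1,2C_2,2C_3$, and the additive $\tfrac14 L^2\sigma_{t+1}^2(6+d)^3$ produces the last term $\tfrac{L^2(6+d)^3}{4}\sum_t\eta_t\sigma_{t+1}^2$. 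Bounding $\min_{1\le t\le T}E\|\nabla f(\bmx_t)\|^2\cdot\sum_t\eta_t\le\sum_t\eta_t E\|\nabla f(\bmx_t)\|^2$ then delivers the stated inequality.

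The main obstacle is the double-sum reorganization: correctly swapping $\sum_t\eta_t\sum_{i\le t}\big(\prod_j\beta_j\big)(\cdots)$, dominating the geometric weights, and comparing $\eta_t$ to $\eta_i$ through $\eta_t=\Theta(\alpha_t)$ so the constants emerge exactly as $C_1,C_2,C_3$. A secondary subtlety specific to smoothing is handling the corrections $\widetilde D_i\sqrt{|\sigma_i^2-\sigma_{i+1}^2|}$: since $\widetilde D_i<0$ they enter the upper bound with a positive sign after the sign flip, and they must be carried through the same summation swap and merged into the $\sum|\sigma_t^2-\sigma_{t+1}^2|$ contribution (splitting any leftover $\eta_t\sqrt{|\cdot|}$ into the $\sum\eta_t^2$ term where needed) rather than appearing separately. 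Verifying that this aggregation reproduces the clean coefficient $\tfrac{Ld\sqrt{M^2+\epsilon}}{2(1-\beta)}$ is the most delicate piece of bookkeeping.
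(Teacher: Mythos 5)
Your outline tracks the paper's proof essentially step for step: the $L$-smoothness descent inequality for $f_{\sigma_{t+1}}$ along the GSmoothAdam step, Lemma~\ref{lem:adam_lemma19} for the linear (alignment) term, Lemma 16 of \cite{he2023convergence} with the redefined $M$ for the quadratic remainder (which feeds $C_3\sum\eta_t^2$), the telescoping with the $\tfrac{Ld}{4}|\sigma_t^2-\sigma_{t+1}^2|$ exchange from Lemma~\ref{lem:differentsmoothingvalues} and the lower bound by $f^*$, the geometric-weight/summation-swap argument using $\eta_t=\Theta(\alpha_t)$, and finally the conversion $\|\nabla f(\bmx)\|^2\le 2\|\nabla f_{\sigma}(\bmx)\|^2+\tfrac14 L^2\sigma^2(6+d)^3$ followed by the min bound. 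All of that is correct and is exactly what the paper does.

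The genuine problem is the last item you flag, and it is not mere bookkeeping: your plan for the $\widetilde{D}_i$ corrections cannot produce the stated constants. After the summation swap these corrections contribute the nonnegative quantity
\begin{equation}
    \frac{\sqrt{M^2+\epsilon}}{1-\beta}\sum_{t=1}^{T}\sum_{i=1}^{t}\prod_{j=i+1}^{t}\eta_{t+1}\beta_j\,|\widetilde{D}_i|\,\sqrt{|\sigma_i^2-\sigma_{i+1}^2|}
\end{equation}
to the right-hand side. Absorbing it by Young's inequality $\eta\sqrt{|\cdot|}\le\tfrac12\eta^2+\tfrac12|\cdot|$ is a legitimate inequality, but it injects new contributions (proportional to $\beta LM\epsilon^{-\nicefrac{1}{2}}\left(\tfrac{3+d}{2}\right)^{\nicefrac{3}{2}}$, carrying the $\widetilde{C_0}/C_0$ and $1/(1-\beta)$ factors from the swap) into both the $\sum\eta_t^2$ coefficient and the $\sum|\sigma_t^2-\sigma_{t+1}^2|$ coefficient, so what you prove is a bound of the same shape with constants strictly larger than $C_3$ and $\tfrac{Ld\sqrt{M^2+\epsilon}}{2(1-\beta)}$; no hypothesis of this theorem (unlike the assumption $\sqrt{|\sigma_t^2-\sigma_{t+1}^2|}\le\eta_t$ present in Theorem~\ref{thm:gsadam} but absent here) lets you hide those extra pieces. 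For comparison, the paper does not absorb this term at all: it deletes the entire double sum ``for simplicity,'' on the grounds that its entries are positive. Since $\widetilde{D}_i\le 0$ makes the term enter the upper bound with a positive sign, deleting it strengthens rather than weakens the claimed inequality, so the paper's own step is unjustified as written; your instinct to carry the term through is the more rigorous route, but carried out honestly it yields the theorem only with inflated constants (or with the double sum left explicitly in the bound), not with the clean coefficients as stated.
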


\begin{proof}
Again, we will repeat the proof from \cite{he2023convergence}, but with the necessary changes for smoothing.
Repeating what was done at the beginning of the GSmoothSGD proof, we have
\begin{align}
    f_{\sigma_{t+1}}(\bmx_{t+1})
    &\leq f_{\sigma_{t+1}}(\bmx_t)+\langle\nabla f_{\sigma_{t+1}}(\bmx_t),\bmx_{t+1}-\bmx_t\rangle+\frac{L}{2}\|\bmx_{t+1}-\bmx_t\|^2\\
    &=
        f_{\sigma_{t+1}}(\bmx_t)
        -\eta_{t+1}\left\langle\nabla f_{\sigma_{t+1}}(\bmx_t),\frac{\bmm_{t+1}}{\sqrt{\bmv_{t+1}+\epsilon}}\right\rangle
        +\frac{L\eta_{t+1}^2}{2}\left\|\frac{\bmm_{t+1}}{\sqrt{\bmv_{t+1}+\epsilon}}\right\|^2.
\end{align}
This means
\begin{align}
    E(f_{\sigma_{t+1}}(\bmx_{t+1}))
    &\leq
        E(f_{\sigma_{t+1}}(\bmx_t))
        -\eta_{t+1}E\left(\left\langle\nabla f_{\sigma_{t+1}}(\bmx_t),\frac{\bmm_{t+1}}{\sqrt{\bmv_{t+1}+\epsilon}}\right\rangle\right)\\
    &\qquad
        +\frac{L\eta_{t+1}^2}{2}E\left(\left\|\frac{\bmm_{t+1}}{\sqrt{\bmv_{t+1}+\epsilon}}\right\|^2\right)\\
    &\leq
    E(f_{\sigma_{t+1}}(\bmx_t))
        -\eta_{t+1}\sum_{i=1}^{t+1}\prod_{j=i+1}^{t+1}\beta D_i
        -\eta_{t+1}\sum_{i=1}^{t+1}\prod_{j=i+1}^{t+1}\beta\widetilde{D}_i\sqrt{|\sigma_i^2-\sigma_{i+1}^2|}\\
    &\qquad
        +\frac{L\eta_{t+1}^2}{2}E\left(\left\|\frac{\bmm_{t+1}}{\sqrt{\bmv_{t+1}+\epsilon}}\right\|^2\right).
\end{align}
Now, repeating the analysis that was done in \cite{he2023convergence}, the smoothed version of their equation (23) is
\begin{multline}
    E(f_{\sigma_{t+1}}(\bmx_{t+1}))
    \leq
        E(f_{\sigma_{t+1}}(\bmx_t))
        -\frac{(1-\beta)\eta_{t+1}}{\sqrt{M^2+\epsilon}}E(\|\nabla f_{\sigma_{t+1}}(\bmx_t)\|^2)
        +\frac{\beta L\eta_{t+1}M^2}{\epsilon}\sum_{i=1}^{t+1}\beta^{t-i}\eta_{i}\\
        +\frac{L\eta_{t+1}^2M^2}{2\epsilon}
        +\frac{\sqrt{d}M^4\eta_{t+1}}{\epsilon^{\nicefrac{3}{2}}}\sum_{i=1}^{t+1}\beta^{t-i}(1-\theta_{i})
        -\eta_{t+1}\sum_{i=1}^{t+1}\prod_{j=i+1}^{t+1}\beta_j\widetilde{D}_i\sqrt{|\sigma_i^2-\sigma_{i+1}^2|}.
\end{multline}
As such, rearranging and summing (as in \cite{he2023convergence}),
\begin{multline}
    \frac{(1-\beta)}{\sqrt{M^2+\epsilon}}\sum_{t=1}^{T}\eta_{t+1}E(\|\nabla f_{\sigma_{t+1}}(\bmx_t)\|^2)
    \leq
        \sum_{t=1}^{T}\Big(E(f_{\sigma_{t+1}}(\bmx_t))-E(f_{\sigma_{t+1}}(\bmx_{t+1})\Big)\\
        +\frac{L\eta_{t+1}^2M^2}{2\epsilon}
        +\frac{\beta L\eta_{t+1}M^2}{\epsilon}\sum_{i=1}^{t+1}\beta^{t-i}\eta_{i}\\
        +\frac{\sqrt{d}M^4\eta_{t+1}}{\epsilon^{\nicefrac{3}{2}}}\sum_{i=1}^{t}\beta^{t-i}(1-\theta_{i})
        -\eta_{t+1}\sum_{i=1}^{t}\prod_{j=i+1}^{t}\beta_j\widetilde{D}_i\sqrt{|\sigma_i^2-\sigma_{i+1}^2|}.
\end{multline}
Now,
\begin{align}
    E(f_{\sigma_{t+1}}(\bmx_t))-E(f_{\sigma_{t+1}}(\bmx_{t+1}))
    \leq E(f_{\sigma_t}(\bmx_t)-f_{\sigma_{t+1}}(\bmx_{t+1}))+\frac{Ld}{4}|\sigma_t^2-\sigma_{t+1}^2|,
\end{align}
which means
\begin{multline}
    \frac{(1-\beta)}{\sqrt{M^2+\epsilon}}\sum_{t=1}^{T}\eta_{t+1}E(\|\nabla f_{\sigma_{t+1}}(\bmx_t)\|^2)
    \leq
        f_{\sigma_1}(\bmx_1)-f^*
        +\frac{L\eta_k^2M^2}{2\epsilon}\\
        +\frac{\beta L\eta_{t+1}M^2}{\epsilon}\sum_{i=1}^t\beta^{t-i}\eta_{i-1}
        +\frac{\sqrt{d}M^4\eta_{t+1}}{\epsilon^{\nicefrac{3}{2}}}\sum_{i=1}^{t+1}\beta^{t-i}(1-\theta_{i})\\
        -\eta_{t+1}\sum_{i=1}^{t}\prod_{j=i+1}^{t}\beta_j\widetilde{D}_i\sqrt{|\sigma_i^2-\sigma_{i+1}^2|}
        +\frac{Ld}{4}\sum_{t=1}^{T}|\sigma_t^2-\sigma_{t+1}^2|.
\end{multline}
Again repeating the analysis in \cite{he2023convergence} (and carrying along the additional additive $\sigma$ terms), our version of their equation (30) is
\begin{multline}
    \sum_{t=1}^{T}\eta_{t+1}E(\|\nabla f_{\sigma_{t+1}}(\bmx_t)\|^2)
    \leq
        C_1(f_{\sigma_1}(\bmx_1)-f^*)
        +C_2\sum_{t=1}^{T}\eta_{t+1}(1-\theta_{t+1})
        +C_3\sum_{t=1}^{T}\eta_{t+1}^2\\
        -\frac{\sqrt{M^2+\epsilon}}{1-\beta}\sum_{t=1}^{T}\sum_{i=1}^{t}\prod_{j=i+1}^{t}\eta_{t+1}\beta_j\widetilde{D}_i\sqrt{|\sigma_i^2-\sigma_{i+1}^2|}
        +\frac{Ld\sqrt{M^2+\epsilon}}{4(1-\beta)}\sum_{t=1}^{T}|\sigma_t^2-\sigma_{t+1}^2|.
\end{multline}
Although it will improve the estimate, for simplicity since all of the terms of the fourth term of the previous equation are positive, we have
\begin{multline}
\label{eqn:thm4_1}
    \sum_{t=1}^{T}\eta_{t+1}E(\|\nabla f_{\sigma_{t+1}}(\bmx_t)\|^2)
    \leq
        C_1(f_{\sigma_1}(\bmx_1)-f^*)
        +C_2\sum_{t=1}^{T}\eta_{t+1}(1-\theta_{t+1})
        +C_3\sum_{t=1}^{T}\eta_{t+1}^2\\
        +\frac{Ld\sqrt{M^2+\epsilon}}{4(1-\beta)}\sum_{t=1}^{T}|\sigma_t^2-\sigma_{t+1}^2|.
\end{multline}

Now, applying Lemma 4 from \cite{nesterov2017random} and finishing the proof from \cite{he2023convergence}, we have
\begin{align}
\label{eqn:thm4_2}
    &\min_{1\leq t\leq T}\Big(E(\|\nabla f(\bmx_t)\|^2)\Big)\sum_{t=1}^{T}\eta_{t+1}\\
    &\leq\min_{1\leq t\leq T}\left(
        2E(\|\nabla f_{\sigma_{t+1}}(\bmx_t)\|^2)
        +\frac{L^2\sigma_{t+1}^2(6+d)^3}{4}
    \right)\sum_{t=1}^{T}\eta_{t+1}\\
    &=\sum_{t=1}^{T}\eta_{t+1}\min_{1\leq t\leq T}\left(
        2E(\|\nabla f_{\sigma_{t+1}}(\bmx_t)\|^2)
        +\frac{L^2\sigma_{t+1}^2(6+d)^3}{4}
    \right)\\
    &\leq\sum_{t=1}^{T}\eta_{t+1}\left(
        2E(\|\nabla f_{\sigma_{t+1}}(\bmx_t)\|^2)
        +\frac{L^2\sigma_{t+1}^2(6+d)^3}{4}
    \right)\\
    &=
        2\sum_{t=1}^{T}\eta_{t+1}E(\|\nabla f_{\sigma_{t+1}}(\bmx_t)\|^2)
        +\frac{L^2(6+d)^3}{4}\sum_{t=1}^{T}\eta_{t+1}\sigma_{t+1}^2.
\end{align}
Combining \eqref{eqn:thm4_1} and \eqref{eqn:thm4_2} gives the result.
\end{proof}

The next result provides almost sure convergence of GSmoothAdam.
The proof primarily relies on the analysis in the proof of the previous theorem.

\begin{thm}[Theorem 9 from \cite{he2023convergence}]
\label{thm:adam_theorem9}
Let $f=\frac{1}{K}\sum_{k=1}^{K}f_{k}$ be $L$-smooth and $f^*$ denote the minimum of $f$.
Suppose that $E(\|\nabla f_k\|^2)\leq\lambda$ for any $k\in[K]$.
Let $(\bmx_t)_{t\geq 1}$ be generated by GSmoothAdam.
Suppose
\begin{equation}
    \sum_{t=1}^{\infty}\frac{\eta_t}{\sum_{i=1}^{t-1}\eta_i}=\infty,
    \quad\sum_{t=1}^{\infty}\eta_t^2<\infty,
    \quad\sum_{t=1}^{\infty}\eta_t(1-\theta_t)<\infty,
\end{equation}
and $\eta_t$ is decreasing.
If $\sum_{t=1}^{\infty}|\sigma_t^2-\sigma_{t+1}^2|<\infty$, then for any $T\geq 1$, we have
\begin{equation}
    \min_{1\leq t\leq T}\|\nabla f_{\sigma_{t+1}}(\bmx_t)\|^2=o\left(\frac{1}{\sum_{t=1}^T\eta_t}\right)\;\text{a.s.}
\end{equation}
\end{thm}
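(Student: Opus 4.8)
The plan is to bootstrap the almost-sure statement directly from the expectation bound already produced inside the proof of Theorem~\ref{thm:adam_theorem4}, rather than running a fresh descent or martingale argument. First I would verify that the hypotheses of Theorem~\ref{thm:adam_theorem4} hold here: since $f=\frac1K\sum_k f_k$ with uniform sampling we get $E(f_k(\bmx))=f(\bmx)$ and $E(\nabla f_k(\bmx))=\nabla f(\bmx)$ for free; taking $\alpha_t=\eta_t$ makes $\eta_t=\Theta(\alpha_t)$ trivial with $(\alpha_t)$ non-increasing (as $\eta_t$ is decreasing); and $\sum_t|\sigma_t^2-\sigma_{t+1}^2|<\infty$ forces $(\sigma_t^2)$ to be Cauchy, hence bounded, so that $M<\infty$. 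The relevant output is not the final display of Theorem~\ref{thm:adam_theorem4} but its intermediate inequality (\ref{eqn:thm4_1}), which already isolates the smoothed gradient $\|\nabla f_{\sigma_{t+1}}(\bmx_t)\|^2$ appearing in the present claim, so no conversion via Lemma~\ref{lem:difference_between_smoothed_gradients} to $\nabla f$ is needed.

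Next I would observe that, under the three summability assumptions $\sum_t\eta_t^2<\infty$, $\sum_t\eta_t(1-\theta_t)<\infty$, and $\sum_t|\sigma_t^2-\sigma_{t+1}^2|<\infty$, every term on the right-hand side of (\ref{eqn:thm4_1}) is bounded uniformly in $T$. Letting $T\to\infty$ then gives $\sum_{t=1}^{\infty}\eta_{t+1}E(\|\nabla f_{\sigma_{t+1}}(\bmx_t)\|^2)<\infty$. Because the summands are nonnegative, Tonelli's theorem permits interchanging sum and expectation, so $E\big(\sum_{t=1}^{\infty}\eta_{t+1}\|\nabla f_{\sigma_{t+1}}(\bmx_t)\|^2\big)<\infty$; a nonnegative random variable with finite mean is a.s. finite, whence $\sum_{t=1}^{\infty}\eta_{t+1}\|\nabla f_{\sigma_{t+1}}(\bmx_t)\|^2<\infty$ almost surely.

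It then remains to convert this almost-sure summability into the stated little-$o$ rate. Writing $a_t=\|\nabla f_{\sigma_{t+1}}(\bmx_t)\|^2$, the divergence hypothesis $\sum_t \eta_t/\sum_{i=1}^{t-1}\eta_i=\infty$ is, by the Abel--Dini theorem, equivalent to $\sum_t\eta_t=\infty$; since the shift between $\sum_{t\le T}\eta_t$ and $\sum_{t\le T}\eta_{t+1}$ is asymptotically negligible, both partial sums diverge and may be used interchangeably in the denominator. Fixing a sample path with $\sum_t\eta_{t+1}a_t<\infty$ and an $\epsilon>0$, I would choose $N$ with tail $\sum_{t>N}\eta_{t+1}a_t<\epsilon$. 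For $T>N$, using $\min_{t\le T}a_t\le a_t$ on the tail gives $\big(\min_{t\le T}a_t\big)\sum_{t=N+1}^{T}\eta_{t+1}\le\epsilon$; as the tail partial sum diverges this already forces $\min_{t\le T}a_t\to0$, and splitting $\sum_{t=1}^{T}\eta_{t+1}$ into its fixed head and its tail yields $\limsup_T\big(\min_{t\le T}a_t\big)\sum_{t=1}^{T}\eta_{t+1}\le\epsilon$. Since $\epsilon$ is arbitrary, $\min_{1\le t\le T}\|\nabla f_{\sigma_{t+1}}(\bmx_t)\|^2=o\big(1/\sum_{t=1}^{T}\eta_t\big)$ a.s.

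I expect this final step to be the main subtlety: the crude bound $\big(\min_{t\le T}a_t\big)\sum_{t\le T}\eta_{t+1}\le\sum_{t\le T}\eta_{t+1}a_t$ only yields an $O$-rate, and upgrading it to the little-$o$ requires the tail/head splitting together with the divergence of $\sum_t\eta_t$. The two purely bookkeeping points to watch are the justification of the Tonelli interchange (immediate from nonnegativity) and the harmlessness of the $\eta_{t+1}$-versus-$\eta_t$ reindexing in the denominator. Everything upstream is inherited essentially verbatim from the proof of Theorem~\ref{thm:adam_theorem4}, so the novelty is confined to the measure-theoretic and deterministic bookkeeping just described.
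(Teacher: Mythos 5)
Your proposal is correct and follows essentially the same route as the paper, which defers to the proof of Theorem 9 in \cite{he2023convergence} with smoothing substituted in: the expectation bound from the proof of Theorem~\ref{thm:adam_theorem4} (inequality (\ref{eqn:thm4_1})), whose right-hand side stays bounded precisely because of the summability hypotheses including $\sum_t|\sigma_t^2-\sigma_{t+1}^2|<\infty$, followed by the monotone-convergence/Tonelli interchange to get almost-sure summability, and then the deterministic head--tail argument converting summability plus divergence of $\sum_t\eta_t$ into the little-$o$ rate. Your explicit verification that $\sum_t|\sigma_t^2-\sigma_{t+1}^2|<\infty$ bounds $(\sigma_t^2)$ and hence keeps $M$ finite (since this theorem, unlike Theorem~\ref{thm:gsadam}, does not assume $\sigma_t\to 0$) is exactly the point the paper's one-line proof glosses over.
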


\begin{proof}
The proof is exactly the same, just replace $f(\bmx^k)$ with $f_{\sigma_{t+1}}(\bmx_t)$.
Also, in order to use Lebesgue's Monotone Convergence Theorem, we need to use our assumption about the summability of $|\sigma_t^2-\sigma_{t+1}^2|$.
\end{proof}

With most of the heavy lifting completed in the proofs of the previous results, we are ready to show that the iterates from GSmoothAdam converge to a stationary point a.s. in $\bmx$.

\begin{proof}[Proof of Theorem~\ref{thm:gsadam}]
Once again, this proof follows the same structure as in \cite{he2023convergence} with changes for smoothing.
Since $f$ satisfies the assumptions of Theorem~\ref{thm:adam_theorem9}, from the proof of Theorem~\ref{thm:adam_theorem9} we have that
\begin{equation}
    \sum_{t=1}^{\infty}\eta_{t+1}\|\nabla f_{\sigma_{t+2}}(\bmx_{t+1})\|^2<\infty\text{ a.s.}
\end{equation}
Since $f$ is $L$-smooth,
\begin{align}
    \Big|\|\nabla f_{\sigma_{t+2}}(\bmx_{t+1})\|-\|\nabla f_{\sigma_{t+1}}(\bmx_{t})\|\Big|
    &\leq\|\nabla f_{\sigma_{t+2}}(\bmx_{t+1})-\nabla f_{\sigma_{t+1}}(\bmx_{t})\|\\
    &\leq L\|\bmx_{t+1}-\bmx_t\|+L\left(\frac{3+d}{2}\right)^{\nicefrac{3}{2}}\sqrt{|\sigma_{t+2}^2-\sigma_{t+1}^2|}\\
    &=L\eta_{t+1}\left\|\frac{\bmm_{t+1}}{\sqrt{\bmv_{t+1}+\epsilon}}\right\|+L\left(\frac{3+d}{2}\right)^{\nicefrac{3}{2}}\sqrt{|\sigma_{t+2}^2-\sigma_{t+1}^2|}\\
    &\leq\frac{LM}{\sqrt{\epsilon}}\eta_{t+1}+L\left(\frac{3+d}{2}\right)^{\nicefrac{3}{2}}\sqrt{|\sigma_{t+2}^2-\sigma_{t+1}^2|}\text{ a.s.}\\
    &\leq\frac{LM}{\sqrt{\epsilon}}\eta_{t+1}+L\left(\frac{3+d}{2}\right)^{\nicefrac{3}{2}}\eta_{t+1}\text{ a.s.}\\
    &=\eta_{t+1}\left(\frac{LM}{\sqrt{\epsilon}}+L\left(\frac{3+d}{2}\right)^{\nicefrac{3}{2}}\right).
\end{align}
By Lemma 21 of \cite{he2023convergence},
\begin{equation}
     \lim_{t\to\infty}\|\nabla f_{\sigma_{t+1}}(\bmx_t)\|^2=0\text{ a.s.}.
\end{equation}
By Lemma~\ref{lem:difference_between_smoothed_gradients}, since $\sigma_t\to 0$, we know that
\begin{equation}
     \lim_{t\to\infty}\|\nabla f(\bmx_t)\|^2=0\text{ a.s.}.
\end{equation}
Since $\|\nabla f(\bmx_t)\|\leq M$ a.s., by Lebesgue's Dominated Convergence Theorem, we have that
\begin{equation}
    \lim_{t\to\infty}E(\|\nabla f(\bmx_t)\|^2)
    =E\left(\lim_{t\to\infty}\|\nabla f(\bmx_t)\|^2\right)
    =0.
\end{equation}
\end{proof}

\section{Details Regarding Explicitly Smoothing Neural Networks}
\label{app:details_explicit_smoothing}

In order to calculate the explicit form that a smoothed neural network has, we need to be able to combine all of the model's components (including its constraint equations) into a single function that we can take the expectation of.
We begin our analysis by writing out the constrained and unconstrained optimization problems for both FFNNs and CNNs.

A FFNN satisfies the following constrained optimization problem:
\begin{align}
\begin{split}
\label{eqn:constrained_ffnn}
    \min_{\theta,b}\quad&\sum_{n=1}^{N}\|x_L^n-y^n\|^2\\
    \text{subject to}\quad&x_1^n=\theta_1x_0^n+b_1\\
    &x_l^n=\theta_lh(x_{l-1}^n)+b_l\text{ for }l=2,...,L
\end{split}
\end{align}
Converting this to an unconstrained optimization problem means that a FFNN satisfies the following:
\begin{align}
\begin{split}
\label{eqn:unconstrained_ffnn}
    \min_{\theta,b}\sum_{n=1}^{N}
        \|x_L^n-y^n\|^2
        +\lambda_1\|\theta_1x_0^n+b_1-x_1^n\|^2
        +\sum_{l=2}^{L}\lambda_l\|\theta_lh(x_{l-1}^n)+b_l-x_l^n\|^2.
\end{split}
\end{align}

A CNN satisfies the following constrained optimization problem:
\begin{align}
\begin{split}
\label{eqn:constrained_cnn}
    \min_{\theta,b}\quad&\sum_{n=1}^{N}\|x_{L+C}^n-y^n\|^2\\
    \text{subject to}\quad&x_1^n=x_0^n*\theta_1+b_1\\
    &x_l^n=h(x_{l-1}^{n})*\theta_l+b_l\text{ for }2\leq l\leq C\\
    &x^n_{l}=\theta_{l}h(x^n_{l-1})+b_{l}\text{ for }C+1\leq l\leq C+L
\end{split}
\end{align}
Again converting this into an unconstrained problem shows that a CNN satisfies:
\begin{multline}
\label{eqn:unconstrained_cnn}
    \min_{\theta,b}\sum_{n=1}^{N}
        \|x_{L+C}^n-y^n\|^2
        +\lambda_1\|x_0^n*\theta_1+b_1-x_1^n\|_F^2
        +\sum_{l=2}^{C}\lambda_l\|h(x_{l-1}^{n})*\theta_l+b_l-x_l^n\|_F^2\\
        +\sum_{l=C+1}^{C+L}\lambda_l\|\theta_{l}h(x^n_{l-1})+b_{l}l-x_l^n\|^2.
\end{multline}
To smooth these loss functions, we need to smooth each term of its sum.
The summary of the smoothed terms and the propositions where they are proven can be found in Table~\ref{tab:summary_of_nnet_smoothing}.

\begin{table}[t]
    \caption{Summary of layer-wise Gaussian smoothing (constants removed)}
    \label{tab:summary_of_nnet_smoothing}
    \centering
    $\begin{array}{c|c|c|c|c}
        \multicolumn{1}{c}{\multirow{2}{*}{\parbox{0.08\linewidth}{\vspace{5pt}\centering\textbf{Layer}\\\textbf{No. }($\bm{l}$)}}}
        &\multicolumn{1}{c}{\multirow{2}{*}{\parbox{0.15\linewidth}{\vspace{5pt}\centering\textbf{Original}\\\textbf{Constraint}}}}
        &\multicolumn{2}{c}{\textbf{Smoothed}}&\\\cmidrule(lr){3-4}
        \multicolumn{1}{c}{}&\multicolumn{1}{c}{}&\multicolumn{1}{c}{\textbf{Constraint}}&\multicolumn{1}{c}{\textbf{Regularizer}}&\textbf{Prop.}\\\specialrule{.1em}{.05em}{.05em}
        l=1&
            x_l=\theta_lh(x_{l-1})+b_l&
            x_l=\theta_lh_{\sigma}(x_{l-1})+b_l&
            &\text{\ref{prop:smoothing_dense_layer}}
            \\\hline
        \multirow{2}{*}{$l>1$}&
            \multirow{2}{*}{$x_l=\theta_lh(x_{l-1})+b_l$}&
            \multirow{2}{*}{$x_l=\theta_lh_{\sigma}(x_{l-1})+b_l$}&
            \|\theta_l\text{diag}(\sqrt{(h^2)_{\sigma}}(x_{l-1}))\|^2_F\qquad\qquad
            &\multirow{2}{*}{\ref{prop:smoothing_dense_layer}}\\
            &&&\qquad\qquad+\|\theta_l\text{diag}(h_{\sigma}(x_{l-1}))\|^2_F&
            \\\hline
        l>1&
            x_l=\theta_lx_{l-1}+b_l&
            x_l=\theta_l\sigma(x_{l-1})+b_l&
            \frac{\sigma^2}{2}\|\theta_l\|_F^2+\frac{\sigma^2d_l}{2}\|x_{l-1}\|^2
            &\text{\ref{prop:smoothing_dense_layer}}
            \\\hline
        l=1&
            x_l=x_{l-1}*\theta_l+b_l-x_l&
            x_l=x_{l-1}*\theta_l+b_l-x_l&
            &\text{\ref{prop:smoothing_convolutional_layer}}
            \\\hline
        l>1&
            x_l=x_{l-1}*\theta_l+b_l-x_l&
            x_l=x_{l-1}*\theta_l+b_l-x_l&
            \frac{\sigma^2w_l^2}{2}\|\theta_l\|^2_F+\frac{\sigma^2}{2}\|x_{l-1}\|_{C_l}^2
            &\text{\ref{prop:smoothing_convolutional_layer}}
            \\\hline
        l\geq 1&
            x_l=\text{Drop}(x_{l-1})&
            x_l=x_{l-1}&
            p\|x_l\|^2
            &\text{\ref{prop:smoothing_dropout_layer}}
            \\\hline
        l\geq 1&
            x_l=\text{AvgPool}(x_{l-1})&
            x_l=\text{AvgPool}(x_{l-1})&
            &\text{\ref{prop:smoothing_pooling_layer}}
    \end{array}$
\end{table}

In order to smooth the functions from the unconstrained problems, we need the following results primarily from \cite{mobahi2016training}.


\begin{prop}
\label{prop:results_from_mobahi}
The following are the results from Gaussian smoothing:
\begin{enumerate}[label={(\alph*)},ref={\thedefn~(\alph*)}]
    \item \cite{mobahi2016training} $\relu_{\sigma}(x)=\frac{x}{2}\left(1+\erf(\nicefrac{x}{\sigma})\right)+\frac{\sigma}{2\sqrt{\pi}}e^{-\nicefrac{x^2}{\sigma^2}}$
    \item $\relu^2_{\sigma}(x)=\frac{1}{4}(1+\erf(\nicefrac{x}{\sigma}))(\sigma^2+2x^2)+\frac{\sigma x}{2\sqrt{\pi}}e^{-\nicefrac{x^2}{\sigma^2}}$
    \item \cite{mobahi2016training} For a matrix $A$, vector $b$, and function $h$,
    \begin{equation}
        \Big(\|Ah(\bmy)+b\|^2\Big)_{\sigma}(\bmx)
        =\Big\|Ah_{\sigma}(\bmx)+b\Big\|^2+\Big\|A\textup{diag}\Big(\sqrt{(h^2)_{\sigma}}(\bmx)\Big)\Big\|^2_F-\Big\|A\textup{diag}(h_{\sigma}(\bmx))\Big\|^2_F
    \end{equation}
    \item $(\|\bmy\|^2)_{\sigma}(\bmx)=\|\bmx\|^2+\frac{\sigma^2d}{2}$
\end{enumerate}
\end{prop}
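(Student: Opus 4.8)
The plan is to reduce every item to a direct evaluation of the smoothing integral in \eqref{eq:f_sigma}, exploiting the fact that the Gaussian kernel factorizes across coordinates. Items (a), (b), and (d) are one-dimensional (or coordinatewise) Gaussian integrals, while (c) is the structural identity that binds them together. For (a) I would specialize \eqref{eq:f_sigma} to $d=1$, writing $\relu_\sigma(x)=\frac{1}{\sqrt\pi}\int_{\mathbb R}\relu(x+\sigma u)e^{-u^2}\,du$. Since $\relu(x+\sigma u)=x+\sigma u$ exactly when $u>-x/\sigma$, this becomes $\frac{1}{\sqrt\pi}\int_{-x/\sigma}^{\infty}(x+\sigma u)e^{-u^2}\,du$. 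Splitting off the $x$-term and the $\sigma u$-term, the first reduces to $\int_{-x/\sigma}^{\infty}e^{-u^2}\,du=\frac{\sqrt\pi}{2}\bigl(1+\erf(x/\sigma)\bigr)$ and the second to $\int_{-x/\sigma}^{\infty}u\,e^{-u^2}\,du=\frac12 e^{-x^2/\sigma^2}$, which assembles to the stated formula.

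For (b) the integrand is $(x+\sigma u)^2$ on the same half-line; expanding the square produces the three truncated moments $\int e^{-u^2}$, $\int u\,e^{-u^2}$, and $\int u^2 e^{-u^2}$ over $[-x/\sigma,\infty)$. The last I would obtain by integration by parts as $-\frac{x}{2\sigma}e^{-x^2/\sigma^2}+\frac{\sqrt\pi}{4}\bigl(1+\erf(x/\sigma)\bigr)$, and then collect the $\erf$ terms and the exponential terms separately; this is routine but sign-sensitive. Item (d) is immediate: expand $\|\bmx+\sigma\bmu\|^2=\|\bmx\|^2+2\sigma\langle\bmx,\bmu\rangle+\sigma^2\|\bmu\|^2$ inside the integral, observe that the linear term vanishes because $\bmu\mapsto\bmu\,e^{-\|\bmu\|^2}$ is odd, that the constant term integrates to $\|\bmx\|^2$ by normalization, and that the quadratic term uses the second moment $\frac{1}{\pi^{d/2}}\int_{\mathbb R^d}\|\bmu\|^2 e^{-\|\bmu\|^2}\,d\bmu=\frac{d}{2}$, giving $\|\bmx\|^2+\frac{\sigma^2 d}{2}$.

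The main work, and the step I expect to be the obstacle, is (c). Writing $g(\bmy)=\|Ah(\bmy)+b\|^2=\sum_i\bigl(\sum_j A_{ij}h(y_j)+b_i\bigr)^2$ and expanding, the only pieces that are nonlinear in $h$ are the products $h(y_j)h(y_k)$. Because the smoothing integral is a product of one-dimensional Gaussians over the coordinates of $\bmy$, the smoothing operator factorizes: off-diagonal products ($j\neq k$) separate into $h_\sigma(x_j)h_\sigma(x_k)$, whereas diagonal terms ($j=k$) become $(h^2)_\sigma(x_j)$ rather than $h_\sigma(x_j)^2$. Consequently $g_\sigma(\bmx)$ agrees with $\|Ah_\sigma(\bmx)+b\|^2$ except on the diagonal, the discrepancy being exactly $\sum_{i,j}A_{ij}^2\bigl[(h^2)_\sigma(x_j)-h_\sigma(x_j)^2\bigr]$. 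The final step is to recognize this sum as a difference of Frobenius norms: since $\bigl(A\,\textup{diag}(v)\bigr)_{ij}=A_{ij}v_j$, one has $\|A\,\textup{diag}(\sqrt{(h^2)_\sigma}(\bmx))\|_F^2=\sum_{i,j}A_{ij}^2(h^2)_\sigma(x_j)$ and likewise $\|A\,\textup{diag}(h_\sigma(\bmx))\|_F^2=\sum_{i,j}A_{ij}^2 h_\sigma(x_j)^2$, which rearranges to the claim. The delicate point throughout is the coordinatewise factorization of the smoothing operator together with the careful separation of diagonal from off-diagonal index pairs; everything else is bookkeeping.
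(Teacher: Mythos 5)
Your proposal is correct. For the two items the paper actually proves, (b) and (d), your route is essentially the paper's own: the same reduction of $(\relu^2)_\sigma$ to truncated Gaussian moments over $[-\nicefrac{x}{\sigma},\infty)$, and the same expand-and-use-symmetry computation for $(\|\bmy\|^2)_\sigma$. Your integration-by-parts value $\int_{-\nicefrac{x}{\sigma}}^{\infty}u^2e^{-u^2}\,du=-\frac{x}{2\sigma}e^{-\nicefrac{x^2}{\sigma^2}}+\frac{\sqrt{\pi}}{4}\left(1+\erf(\nicefrac{x}{\sigma})\right)$ is right, and your sign bookkeeping is actually cleaner than the paper's, whose intermediate line carries a sign typo (it writes $-\frac{\sigma x}{\sqrt{\pi}}e^{-\nicefrac{x^2}{\sigma^2}}$ for the cross term where $+\frac{\sigma x}{\sqrt{\pi}}e^{-\nicefrac{x^2}{\sigma^2}}$ is needed to reach the stated, correct final formula). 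Where you genuinely go beyond the paper is in scope: the paper offers no proof of (a) or (c), deferring both to \cite{mobahi2016training}, whereas you prove them. Your argument for (c) --- coordinatewise factorization of the smoothing operator, so that off-diagonal products $h(y_j)h(y_k)$ smooth to $h_\sigma(x_j)h_\sigma(x_k)$ while diagonal terms smooth to $(h^2)_\sigma(x_j)$, with the discrepancy $\sum_{i,j}A_{ij}^2\left[(h^2)_\sigma(x_j)-h_\sigma(x_j)^2\right]$ then identified as the difference of the two Frobenius norms --- is correct and is precisely the content hidden behind the citation. One hypothesis worth stating explicitly if this were written up: (c) requires $h$ to act coordinatewise on $\bmy$ (as activation functions do throughout this paper); your factorization step silently uses this, and the identity fails for a general vector-valued $h$.
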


\begin{proof}[Proof of (b)]
We have the following:
\begin{align}
    (\relu^2)_{\sigma}(x)
    &=\frac{1}{\sqrt{\pi}}\int_{\mathbb{R}}\Big(\relu(x+\sigma u)\Big)^2e^{-u^2}\;du\\
    &=\frac{1}{\sqrt{\pi}}\int_{-\nicefrac{x}{\sigma}}^{\infty}(x+\sigma u)^2e^{-u^2}\;du\\
    &=\frac{1}{\sqrt{\pi}}\int_{-\nicefrac{x}{\sigma}}^{\infty}(x^2+2\sigma xu+\sigma^2 u^2)e^{-u^2}\;du\\
    &=\frac{x^2}{2}\Big(1+\erf(\nicefrac{x}{\sigma})\Big)-\frac{\sigma x}{\sqrt{\pi}}e^{-\nicefrac{x^2}{\sigma^2}}+\frac{\sigma x}{2\sqrt{\pi}}e^{-\nicefrac{x^2}{\sigma^2}}+\frac{\sigma^2}{4}\Big(1+\erf(\nicefrac{x}{\sigma})\Big)\\
    &=\frac{1}{4}\Big(1+\erf(\nicefrac{x}{\sigma})\Big)(\sigma^2+2x^2)+\frac{\sigma x}{2\sqrt{\pi}}e^{-\nicefrac{x^2}{\sigma^2}}
\end{align}
\end{proof}

\begin{proof}[Proof of (d)]
Observe:
\begin{align}
    \frac{1}{\sqrt{\pi}}\int_{\mathbb{R}^d}\|\bmx+\sigma \bmu\|^2e^{-\|\bmu\|^2}\;du
    &=\frac{1}{\sqrt{\pi}}\int_{\mathbb{R}^d}(\|\bmx\|^2+2\sigma \langle \bmx,\bmu\rangle + \sigma^2\|\bmu\|^2)e^{-\|\bmu\|^2}\;du\\
    &=\|\bmx\|^2+0+\frac{\sigma^2d}{2}.
\end{align}
\end{proof}

\subsection{Smoothing Terms in Neural Network Unconstrained Optimization}
\label{app:details_explicit_smoothing_unconstrained}

In this section, we smooth each of the terms from the FFNN and CNN unconstrained problems.
Let us represent the unconstrained problem as
\begin{equation}
    \min_{x,\theta,b}\sum_{n=1}^{N}f(x,\theta,b)
\end{equation}
where $f$ is a sum of certain norms.
Since smoothing is a linear operator, we can focus on a single data point and drop the sum.
Hence, we focus on smoothing $f(x,\theta,b)$.
Then
\begin{multline}
    (f*k_{\sigma})(x,\theta,b)\\
    =\underbrace{\int_{\mathbb{R}^{d_L}} \cdots \int_{\mathbb{R}^{d_1}}}_{\substack{\text{smoothing}\\\text{wrt }x}}
    \underbrace{\int_{\mathbb{R}^{d_{L}\times d_{L-1}}} \cdots \int_{\mathbb{R}^{d_1\times d_0}}}_{\substack{\text{smoothing}\\\text{wrt }\theta}}
    \underbrace{\int_{\mathbb{R}^{d_L}} \cdots \int_{\mathbb{R}^{d_1}}}_{\substack{\text{smoothing}\\\text{wrt }b}}
    f(x+\sigma u_x,\theta+\sigma U_{\theta},b+\sigma u_b)\\
    \cdot e^{-\|u_b\|^2}e^{-\|U_{\theta}\|_F^2}e^{-\|u_x\|^2}du_bdU_{\theta}du_x.
\end{multline}
Since $f\geq 0$, we are free to switch the order of integration.
As such, the process that we will take is to sequentially smooth with respect to the variables.

A summary of the results in this section can be found in Table~\ref{tab:summary_of_nnet_smoothing}.

Based on the unconstrained problems, we only ever need to focus on a layer and its input.
We use the notation $x$ for the input to a layer and $x_+$ for the output. As such, the weights and biases of a layer are denoted by $\theta_+$ and $b_+$, respectively.


\begin{prop}[Smoothing Dense Layer]
\label{prop:smoothing_dense_layer}
Let $x\in\mathbb{R}^d$, $x_+,b_+\in\mathbb{R}^{d_+}$, $\theta_+\in\mathbb{R}^{d\times d_+}$, and
\begin{equation}
    l(b_+,x_+,\theta_+,x )
    =\|\theta_+h(x )+b_+-x_+\|^2.
\end{equation}
If $x=x_0$ represents the input data (i.e. we cannot smooth with respect to $x$), then
\begin{equation}
    l_{\sigma}(b_1,x_1,\theta_1,x_0)
    =\|\theta_1h(x_{0})+b_1-x_1\|^2
    +\frac{\sigma^2d_1}{2}\|h(x_{0})\|^2
    +\sigma^2 d_1.
\end{equation}
Otherwise
\begin{multline}
    l_{\sigma}(b_+,x_+,\theta_+,x )
    =\|\theta_+h_{\sigma}(x )+b_+-x_+\|^2
    +\|\theta_+\textup{diag}(\sqrt{(h^2)_{\sigma}}(x ))\|^2_F\\
    -\|\theta\textup{diag}(h_{\sigma}(x ))\|^2_F
    +\sigma^2 d_+\left(1+\frac{1}{2}\|\sqrt{(h^2)_{\sigma}}(x )\|^2\right).
\end{multline}
Furthermore, if $h(x)=x$, then
\begin{equation}
    l_{\sigma}(b_+,x_+,\theta_+,x )
    =\|\theta_+x+b_+-x_+\|^2
    +\frac{\sigma^2}{2}\|\theta_+\|_F^2
    +\frac{\sigma^2d_+}{2}\|x\|^2
    +\frac{\sigma^4dd_+}{4}+\sigma^2d_+.
\end{equation}
\end{prop}

\begin{proof}
First, we smooth with respect to $b_+$:
\begin{align}
    &\frac{1}{\pi^{\nicefrac{d_+}{2}}}\int_{\mathbb{R}^{d_+}}\|\theta_+h(x )+(b_++\sigma u)-x_+\|^2e^{-\|u\|^2}\;du\\
    &=\frac{1}{\pi^{\nicefrac{d_+}{2}}}\int_{\mathbb{R}^{d_+}}\left(
        \|\theta_+h(x )+b_+-x_+\|^2
        +2\sigma\Big\langle \theta_+h(x )+b_+-x_+,u\Big\rangle
        +\sigma^2\|u\|^2
    \right)e^{-\|u\|^2}\;du\\
    &=\|\theta_+h(x )+b_+-x_+\|^2+\frac{\sigma^2 d_+}{2}
\end{align}

Second, we smooth with respect to $x_+$. Since $x_+$ plays the same role as $b_+$, smoothing with respect to $x_+$ is analogous to smoothing with respect to $b_+$, so we end up with 
\begin{align}
    \|\theta_+h(x )+b_+-x_+\|^2
    +\sigma^2 d_+.
\end{align}

Third, we smooth with respect to $\theta_+$. Focusing on the first term in the previous equation, let $d'=\text{dim}(\theta_+)=d_+ \times d$, then
\begin{align}
    &\frac{1}{\pi^{\nicefrac{d'}{2}}}\int_{\mathbb{R}^{d'}}\|(\theta_++\sigma U)h(x )+b_+-x_+\|^2e^{-\|U\|_F^2}\;dU\\
    &=\frac{1}{\pi^{d'}}\int_{\mathbb{R}^{d'}}\Big(
        \|\theta_+h(x )+b_+-x_+\|^2
        +\sigma^2\|Uh(x )\|^2
    \Big)e^{-\|U\|_F^2}\;dU\\
    &=\|\theta_+h(x )+b_+-x_+\|^2
    +\frac{\sigma^2}{\pi^{\nicefrac{d'}{2}}}\sum_{i=1}^{d_+}\sum_{j=1}^{d }\int_{\mathbb{R}^{d'}}U^{ij}U^{ik}\big(h(x )\big)_j\big(h(x )\big)_ke^{-\|U\|_F^2}\;dU\\
    &=\|\theta_+h(x )+b_+-x_+\|^2
    +\frac{\sigma^2}{\pi^{\nicefrac{d'}{2}}}\sum_{j=1}^{d }\left[
        \big(h(x )\big)_j^2
        \sum_{i=1}^{d_+}\frac{\pi^{\nicefrac{d'}{2}}}{2}
    \right]\\
    &=\|\theta_+h(x )+b_+-x_+\|^2
    +\frac{\sigma^2d_+}{2}\|h(x )\|^2.
\end{align}
So, after smoothing with respect to $b$, $\theta$, and $x_+$, we have
\begin{align}
\label{eqn:ffnn_second_term_smoothing_after_b_theta_xl}
    \|\theta_+h(x )+b_+-x_+\|^2
    +\frac{\sigma^2d_+}{2}\|h(x )\|^2
    +\sigma^2 d_+.
\end{align}

Finally, we smooth with respect to $x $.
By Mobahi's proposition, the first term of \eqref{eqn:ffnn_second_term_smoothing_after_b_theta_xl} smoothes to
\begin{align}
    \|\theta_+h_{\sigma}(x )+b_+-x_+\|^2
    +\|\theta_+\text{diag}(\sqrt{(h^2)_{\sigma}}(x ))\|^2_F 
    -\|\theta_+\text{diag}(h_{\sigma}(x ))\|^2_F.             
\end{align}
Again, by Mobahi's proposition, the second term of \eqref{eqn:ffnn_second_term_smoothing_after_b_theta_xl} smoothes to
\begin{multline}
    \Big\|h_{\sigma}(x )\Big\|^2+\Big\|\textup{diag}\Big(\sqrt{(h^2)_{\sigma}}(x )\Big)\Big\|^2_F-\Big\|\textup{diag}(h_{\sigma}(x ))\Big\|^2_F\\
    =\|h_{\sigma}(x )\|^2
    +\|\sqrt{(h^2)_{\sigma}}(x )\|^2
    -\|h_{\sigma}(x )\|^2
    =\|\sqrt{(h^2)_{\sigma}}(x )\|^2.
\end{multline}
Combining these two, we have that after smoothing with respect to everything,
\begin{multline}
    \|\theta_+h_{\sigma}(x )+b_+-x_+\|^2
    +\|\theta_+\text{diag}(\sqrt{(h^2)_{\sigma}}(x ))\|^2_F
    -\|\theta_+\text{diag}(h_{\sigma}(x ))\|^2_F\\
    +\sigma^2 d_+\left(1+\frac{1}{2}\|\sqrt{(h^2)_{\sigma}}(x )\|^2\right).
\end{multline}

To prove the furthermore statement, we jump back to \eqref{eqn:ffnn_second_term_smoothing_after_b_theta_xl}, which means that if $h$ is the identity function, then after smoothing with respect to $b$, $\theta$, and $x_+$, we have
\begin{align}
\label{eqn:ffnn_furthermore_starting_point}
    \|\theta_+x+b_+-x_+\|^2
    +\frac{\sigma^2d_+}{2}\|x\|^2
    +\sigma^2 d_+.
\end{align}
Once again, we smooth with respect to $x$.
The first term smoothes as follows:
\begin{align}
    &\frac{1}{\pi^{\nicefrac{d}{2}}}\int_{\mathbb{R}^d}\|\theta_+(x+\sigma u)+b_+-x_+\|^2e^{-\|u\|^2}\;du\\
    &=\|\theta_+x+b_+-x_+\|^2
    +\frac{\sigma^2}{\pi^{\nicefrac{d}{2}}}\int_{\mathbb{R}^d}\|\theta_+u\|^2e^{-\|u\|^2}\;du\\
    &=\|\theta_+x+b_+-x_+\|^2
    +\frac{\sigma^2}{\pi^{\nicefrac{d}{2}}}\sum_{i=1}^{d_+}\sum_{j,k=1}^{d}(\theta_+)_{ij}(\theta_+)_{ik}\int_{\mathbb{R}^d}u_ju_ke^{-\|u\|^2}\;du\\
    &=\|\theta_+x+b_+-x_+\|^2
    +\frac{\sigma^2}{\pi^{\nicefrac{d}{2}}}\sum_{i=1}^{d_+}\sum_{j=1}^{d}(\theta_+)_{ij}^2\int_{\mathbb{R}^d}u_j^2e^{-\|u\|^2}\;du\\
    &=\|\theta_+x+b_+-x_+\|^2
    +\frac{\sigma^2}{2}\sum_{i=1}^{d_+}\sum_{j=1}^{d}(\theta_+)_{ij}^2\\
    &=\|\theta_+x+b_+-x_+\|^2
    +\frac{\sigma^2}{2}\|\theta_+\|_F^2.
\end{align}
The second term of \eqref{eqn:ffnn_furthermore_starting_point}, smoothes to
\begin{equation}
    \frac{\sigma^2d_+}{2}\left(\|x\|^2+\frac{\sigma^2d}{2}\right)
    =\frac{\sigma^2d_+}{2}\|x\|^2+\frac{\sigma^4dd_+}{4},
\end{equation}
which has been done several times already in this proof. Therefore, we end up with
\begin{equation}
    l_{\sigma}(b_+,x_+,\theta_+,x )
    =\|\theta_+x+b_+-x_+\|^2
    +\frac{\sigma^2}{2}\|\theta_+\|_F^2
    +\frac{\sigma^2d_+}{2}\|x\|^2
    +\frac{\sigma^4dd_+}{4}+\sigma^2d_+.
\end{equation}
\end{proof}

Before we smooth the convolutional layer, we want to write out the ``convolutional norm'':
\begin{equation}
    \|x\|_{C_+}^2
    =\sum_{a,b=0}^{w_+-1}\sum_{i,j=1}^{k_+}\Big(x^{(as_++i)(bs_++j)}\Big)^2
    =\|x*J_{k_+}\|^2.
\end{equation}

\begin{prop}[Smoothing Convolutional Layer]
\label{prop:smoothing_convolutional_layer}
Let $x\in\mathbb{R}^d$, $x_+,b_+\in\mathbb{R}^{d_+}$, $\theta_+\in\mathbb{R}^{c_+\times c_+}$, and
\begin{equation}
    l(\theta_+,x_+,x)
    =\|x*_+\theta_++b_+-x_+\|_F^2
\end{equation}
were $*_+$ represents the convolution with stride $s_+$.
Let $C_+$ represent the convolutional layer with these parameters.
If $x=x_0$ represents the input data (i.e. we cannot smooth with respect to $x$), then
\begin{align}
    l_{\sigma}(\theta_+,x_+,x)
    =\|x*_+\theta_++b_+-x_+\|_F^2+\frac{\sigma^2}{2}\|x\|^2_{C_+}+\sigma^2d_+.
\end{align}
Otherwise,
\begin{align}
    l_{\sigma}(\theta_+,x_+,x)
    =\|x *_+\theta_++b_+-x_+\|_F^2+\frac{\sigma^2}{2}w_+^2\|\theta_+\|_F^2
    +\frac{\sigma^2}{2}\|x \|^2_{C_+}+\frac{\sigma^4}{4}\|1\|^2_{C_+}
    +\sigma^2d_+.
\end{align}
\end{prop}

\begin{proof}
For the proof, we will simplify notation and write $*$ instead of $*_+$ since all of the convolutions will use the same stride.
Recall that $w_+=\left(\frac{\text{width}(x)+c_+}{s_+}+1\right)$.
First, we smooth with respect to $b_+$:
\begin{align}
    &\frac{1}{\pi^{\nicefrac{d_+}{2}}}\int_{\mathbb{R}^{{d_+}}}\|x*\theta_++(b_++\sigma U)-x_+\|_F^2e^{-\|U\|_F^2}\;dU\\
    &=\frac{1}{\pi^{\nicefrac{d_+}{2}}}\int_{\mathbb{R}^{{d_+}}}\Big(
        \|x*\theta_++b_+-x_+\|_F^2
        +2\sigma\langle x*\theta_++b_+-x_+,U\rangle_F
        +\sigma^2\|U\|_F^2
    \Big)e^{-\|U\|_F^2}\;dU\\
    &=\|x*\theta_++b_+-x_+\|_F^2+\frac{\sigma^2d_+}{2}.
\end{align}

\noindent Second, we smooth with respect to $\theta_+$. For first term from the previous equation, we have
\begin{multline}
    \frac{1}{\pi^{\nicefrac{c_+^2}{2}}}\int_{\mathbb{R}^{c_+^2}}\|x *(\theta_++\sigma U)+b_+-x_+\|_F^2e^{-\|U\|_F^2}\;dU\\
    =\|x *\theta_++b_+-x_+\|_F^2
        +\frac{\sigma^2}{\pi^{\nicefrac{c_+^2}{2}}}\int_{\mathbb{R}^{c_+^2}}\|x *U\|_F^2e^{-\|U\|_F^2}\;dU
\end{multline}
Focusing on the convolution term, (here $a',b'=0,s_+,2s_+,...,w_+s_+$)
\begin{align}
\label{eqn:convolution_inner_product_0}
    &\frac{\sigma^2}{\pi^{\nicefrac{c_+^2}{2}}}\int_{\mathbb{R}^{c_+^2}}\|x *U\|_F^2e^{-\|U\|_F^2}\;dU\\
    &=\frac{\sigma^2}{\pi^{\nicefrac{c_+^2}{2}}}\int_{\mathbb{R}^{c_+^2}}
        \sum_{a',b'}\left(\sum_{i,j=1}^{c_+}U^{ij}x^{(a'+i)(b'+j)}\right)^2
    e^{-\|U\|_F^2}\;dU\\
    &=\frac{\sigma^2}{\pi^{\nicefrac{c_+^2}{2}}}\int_{\mathbb{R}^{c_+^2}}
        \sum_{a',b'}\left(\sum_{i,j=1}^{c_+}\sum_{k,l=1}^{c_+}U^{ij}U^{kl}x ^{(a'+i)(b'+j)}x ^{(a'+k)(b'+l)}
    \right)e^{-\|U\|_F^2}\;dU\\
    &=\frac{\sigma^2}{\pi^{\nicefrac{c_+^2}{2}}}\sum_{a',b'}\sum_{i,j=1}^{c_+}\big(x ^{(a'+i)(b'+j)}\big)^2\int_{\mathbb{R}^{c_+^2}}(U^{ij})^2e^{-\|U\|_F^2}\;dU\\
    &=\frac{\sigma^2}{2}\|x \|_{C_+}^2.
\end{align}
So after smoothing with respect to $b_+$ and $\theta_+$, we have
\begin{align}
    \|x *\theta_++b_+-x_+\|_F^2+\frac{\sigma^2}{2}\|x \|^2_{C_+}+\frac{\sigma^2d_+}{2}.
\end{align}

Third, we smooth with respect to $x_+$, but as with $b_+$ we end up with
\begin{align}
\label{eqn:smoothing_convolutional_layer_1}
    \|x *\theta_++b_+-x_+\|_F^2+\frac{\sigma^2}{2}\|x\|^2_{C_+}+\sigma^2d_+.
\end{align}

Finally, we smooth with respect to $x$. For the first term of \eqref{eqn:smoothing_convolutional_layer_1}, we have
\begin{multline}
    \frac{1}{\pi^{\frac{d }{2}}}\int_{\mathbb{R}^{d }}\|(x +\sigma U)*\theta_++b_+-x_+\|_F^2e^{-\|U\|_F^2}\;dU\\
    =\|x *\theta_++b_+-x_+\|_F^2
    +\frac{\sigma^2}{\pi^{\frac{d}{2}}}\int_{\mathbb{R}^{d}}\|U*\theta_+\|_F^2e^{-\|U\|_F^2}\;dU
\end{multline}
Now, for the second term, we repeat the process in \eqref{eqn:convolution_inner_product_0} and arrive at
\begin{align}
    &\frac{1}{\pi^{\frac{d }{2}}}\int_{\mathbb{R}^{d }}\|(x +\sigma U)*\theta_++b_+-x_+\|_F^2e^{-\|U\|_F^2}\;dU\\
    &=\|x *\theta_++b_+-x_+\|_F^2+\frac{\sigma^2}{2}\sum_{a,b=1}^{w_+}\sum_{i,j=1}^{c_+}(\theta_+^{ij})^2\\
    &=\|x *\theta_++b_+-x_+\|_F^2+\frac{\sigma^2}{2}w_+^2\sum_{i,j=1}^{c_+}(\theta_+^{ij})^2\\
    &=\|x *\theta_++b_+-x_+\|_F^2+\frac{\sigma^2}{2}w_+^2\|\theta_+\|_F^2.
\end{align}
For the second term of \eqref{eqn:smoothing_convolutional_layer_1},
\begin{align}
    &\frac{1}{\pi^{\frac{d }{2}}}\int_{\mathbb{R}^d}\|x +\sigma U\|^2_{C_+}e^{-\|U\|_F^2}\;dU\\
    &=\frac{1}{\pi^{\frac{d }{2}}}\int_{\mathbb{R}^d}\sum_{a,b=1}^{w_+}\sum_{i,j=1}^{c_+}
        \Big(x ^{(a+i)(b+j)}+\sigma U^{(a+i)(b+j)}\Big)^2
    e^{-\|U\|_F^2}\;dU\\
    &=\sum_{a,b=1}^{w_+}\sum_{i,j}\Big(x ^{(a+i)(b+j)}\Big)^2+\frac{\sigma^2}{\pi^{\frac{d }{2}}}\sum_{a,b=1}^{w_+}\sum_{i,j}\frac{\pi^{\frac{d }{2}}}{2}\\
    &=\|x \|^2_{C_+}+\frac{\sigma^2}{2}\|1\|^2_{C_+}.
\end{align}
So, after smoothing with respect to all variables, we end up with
\begin{align}
    \|x *\theta_++b_+-x_+\|_F^2+\frac{\sigma^2}{2}w_+^2\|\theta_+\|_F^2
    +\frac{\sigma^2}{2}\|x \|^2_{C_+}+\frac{\sigma^4}{4}\|1\|^2_{C_+}
    +\sigma^2d_+.
\end{align}
\end{proof}

\begin{prop}[Smoothing Dropout Layer]
\label{prop:smoothing_dropout_layer}
Let $p\in [0,1]$ and $\textup{drop}$ be given by $\mathcal{P}(\textup{drop}(x) = 0) = p$ and $\mathcal{P}(\textup{drop}(x) = x) = 1-p$.
For $x,x_+\in\mathbb{R}^d$, define the unconstrained loss associated with the dropout layer as
\begin{equation}
    l(x,x_+)=\|\textup{drop}(x)-x_+\|^2.
\end{equation}
Then
\begin{equation}
    l_{\sigma}(x,x_+)=p\|x_+\|^2+(1-p)\|x-x_+\|^2+\left(1-\tfrac{p}{2}\right)\sigma^2d.
\end{equation}
\end{prop}

\begin{proof}
Let $x,x_+\in\mathbb{R}^d$.
Note that if $x$ and $x_+$ are matrices, the norms and inner products in this proof are the Frobenius-type.
First, we will smooth with respect to $x_+$. So,
\begin{align}
    E_u[l(x,x_++\sigma u)]
    &=\frac{p}{\pi^{\nicefrac{d}{2}}}\int_{\mathbb{R}^d}\|x_++\sigma u\|^2e^{-\|u\|^2}\;du
    +\frac{1-p}{\pi^{\nicefrac{d}{2}}}\int_{\mathbb{R}^d}\|x - (x_++\sigma u)\|^2e^{-\|u\|^2}\;du.
\end{align}
The first integral can be computed as
\begin{align}
    \frac{p}{\pi^{\nicefrac{d}{2}}}\int_{\mathbb{R}^d}\|x_++\sigma u\|^2e^{-\|u\|^2}\;du
    &=p\|x_+\|^2+\frac{p\sigma^2d}{2}.
\end{align}
The second integral is
\begin{align}
    \frac{1-p}{\pi^{\nicefrac{d}{2}}}\int_{\mathbb{R}^d}\|x - (x_++\sigma u)\|^2e^{-\|u\|^2}\;du
    &=(1-p)\|x-x_+\|^2+\frac{(1-p)\sigma^2d}{2}.
\end{align}
So,
\begin{align}
    E_u[l(x,x_++\sigma u)]
    &=p\|x_+\|^2+\frac{p\sigma^2d}{2}
    +(1-p)\|x-x_+\|^2+\frac{(1-p)\sigma^2d}{2}\\
    &=p\|x_+\|^2+(1-p)\|x-x_+\|^2+\frac{\sigma^2d}{2}.
\end{align}

Second, we will smooth with respect to $x$. We have
\begin{align}
    &E_u(l(x+\sigma u,x_+))\\
    &=\frac{1}{\pi^{\nicefrac{d}{2}}}\int_{\mathbb{R}^d} \left(
        p\|x_+\|^2+(1-p)\|x+\sigma u-x_+\|^2+\frac{\sigma^2d}{2}
    \right)e^{-\|u\|^2}\;du\\
    &=p\|x_+\|^2
    +\frac{\sigma^2d}{2}
    +\frac{1-p}{\pi^{\nicefrac{d}{2}}}\int_{\mathbb{R}^d}\|x+\sigma u-x_+\|^2e^{-\|u\|^2}\;du\\
    &=p\|x_+\|^2
    +\frac{\sigma^2d}{2}
    +(1-p)\left(
        \|x-x_+\|^2
        +\frac{\sigma^2d}{2}
    \right)\\
    &=(1-p)\|x-x_+\|^2
    +p\|x_+\|^2
    +(2-p)\frac{\sigma^2d}{2}.
\end{align}
This shows the claimed result.
\end{proof}

\begin{prop}[Smoothing Average Pooling]
\label{prop:smoothing_pooling_layer}
Define
\begin{equation}
    \Big(\textup{pool}(x)\Big)_i = \frac{1}{k}\sum_{j=1}^{k}x^{i_j},
\end{equation}
that is the pooling layer averages over $x^{i_j}$ for $j=1,...,k$.
Let
\begin{equation}
    l(x,x_+)=\|\textup{pool}(x)-x_+\|^2.
\end{equation}
Then
\begin{equation}
    l_{\sigma}(x,x_+)=\|\textup{pool}(x)-x_+\|^2
    + \frac{\sigma^2 d_+}{2k} + \frac{\sigma^2 d_+}{2}.
\end{equation}
\end{prop}

\begin{proof}
First, we smooth with respect to $x_+$, which has been done numerous times throughout this note and we end up with
\begin{equation}
    \|\textup{pool}(x)-x_+\|^2 + \frac{\sigma^2 d_+}{2}.
\end{equation}
Second, we smooth with respect to $x$:
\begin{multline}
    \frac{1}{\pi^{\nicefrac{d}{2}}}\int_{\mathbb{R}^d}\left\|
        \textup{pool}(x+\sigma u)-x_{+}
    \right\|^2e^{-\|u\|^2}\;du\\
    =\|\textup{pool}(x+\sigma u)-x_{+}\|^2
    +\frac{\sigma^2}{\pi^{\nicefrac{d}{2}}}\int_{\mathbb{R}^d}\left\|
        \textup{pool}(u)
    \right\|^2e^{-\|u\|^2}\;du.
\end{multline}
Focusing on the last term,
\begin{align}
    \int_{\mathbb{R}^d}\left\|
        \textup{pool}(u)
    \right\|^2e^{-\|u\|^2}\;du
    &=\sum_{i=1}^{d_+}\int_{\mathbb{R}^d}\left(
        \frac{1}{k}\sum_{j=1}^{k}u^{i_j}
    \right)^2e^{-\|u\|^2}\;du\\
    &=\frac{1}{k^2}\sum_{i=1}^{d_+}\sum_{j,m=1}^{k}\int_{\mathbb{R}^d}
        u^{i_j}u^{i_m}e^{-\|u\|^2}\;du\\
    &=\frac{1}{k^2}\sum_{i=1}^{d_+}\sum_{j=1}^{k}\int_{\mathbb{R}^d}
        (u^{i_j})^2e^{-\|u\|^2}\;du\\
    &=\frac{d_+\pi^{\nicefrac{d}{2}}}{2k}.
\end{align}
So,
\begin{equation}
    \frac{1}{\pi^{\nicefrac{d}{2}}}\int_{\mathbb{R}^d}\left\|
        \textup{pool}(x+\sigma u)-x_{+}
    \right\|^2e^{-\|u\|^2}\;du
    =\|\textup{pool}(x+\sigma u)-x_{+}\|^2+\frac{d_+\sigma^2}{2k}.
\end{equation}
Combining this with the constant term from the previous equation gives the result.
\end{proof}

\begin{note}[Comment about pooling layers]
If a max pooling layer is used, then the smoothing function is difficult (if not impossible to compute). In particular, let $\mathcal{I}$ is the index that a single part of the pooling layer takes the maximum over and
\begin{equation}
    h(x)=\max_{i,j\in\mathcal{I}}x^{ij}.
\end{equation}
Since $h$ is nonlinear, we need to use Proposition~\ref{prop:results_from_mobahi} (e) which requires computing $h_{\sigma}$.
Then
\begin{align}
    h_{\sigma}(x)
    &=\frac{1}{\pi^{\nicefrac{|\mathcal{I}|}{2}}}\int_{\mathbb{R}^{|\mathcal{I}|}}\max_{i,j\in\mathcal{I}}\Big(x^{ij}+\sigma U_{ij}\Big)e^{-\|U\|_F^2}\;dU\\
    &=E\left(\max_{i,j\in\mathcal{I}}X_{ij}\right)
\end{align}
where $X_{ij}\sim\mathcal{N}(x_l^{ij},\sigma^2)$.
Even in the case where all of the $x_l^{ij}$ are the same, this does not have a closed form.
Hence, we cannot compute $h_{\sigma}$.
\end{note}

\subsection{Proofs of Mathematical Formulation of Smoothed Neural Networks}
\label{app:proofs_of_smooth_nnets}

Now that we have smoothed all of the components of both of the unconstrained problems \eqref{eqn:unconstrained_ffnn} and \eqref{eqn:unconstrained_cnn}, we are ready to explicitly write out their smoothed mathematical formulations.

\begin{proof}[Proof of Theorem~\ref{thm:smooth_ffnn}]
The unconstrained FFNN is given in \eqref{eqn:unconstrained_ffnn}, which finds the minimum of the sum (over $n$) of
\begin{align}
\begin{split}
        \|x_L^n-y^n\|^2
        +\lambda_1\|\theta_1x_0^n+b_1-x_1^n\|^2
        +\sum_{l=2}^{L}\lambda_l\|\theta_lh(x_{l-1}^n)+b_l-x_l^n\|^2.
\end{split}
\end{align}
Based on the previous section, this smoothes to
\begin{align}
\begin{split}
    \|x_L^n-y^n\|^2&+\frac{\sigma^2d_L}{2}
    +\lambda_1\left(\|\theta_1x_0^n+b_1-x_1^n\|^2+\frac{\sigma^2d_1}{2}\|x_0\|^2+\sigma^2d_1\right)\\
    &+\sum_{l=2}^{L}\lambda_l\Bigg(\|\theta_lh(x_{l-1}^n)+b_l-x_l^n\|^2
    +\|\theta_l\textup{diag}(\sqrt{(h^2)_{\sigma}}(x_{l-1}))\|^2_F\\
    &\phantom{+\sum_{l=2}^{L}\lambda_l\Bigg(}-\|\theta_l\textup{diag}(h_{\sigma}(x_{l-1}))\|^2_F
    +\sigma^2 d_l\left(1+\frac{1}{2}\|\sqrt{(h^2)_{\sigma}}(x_{l-1})\|^2\right)\Bigg).
\end{split}
\end{align}
Now, we can reconstrain to get the result. The proof for the CNN case is exactly the same.
\end{proof}

\section{Additional Details of Numerical Experiments and Practical Guide to Smooth Neural Network Implementation}
\label{app:numerics}

We begin with a description of how to implement the explicitly smooth neural networks.
In order to use TensorFlow's built-in layer regularizers, we avoid regularization terms that mix weights and layer inputs (e.g., $\|\theta\textup{diag}(\sqrt{(h^2)_{\sigma}}(x))\|^2_F$).
This is why took a non-standard approach and we separated the activation function from the layer.
Hence, we split up terms like $\|\theta h(x)+b\|$ into two terms like $\|h(x)-y\|$ and $\|\theta y+b\|$.
Then we use the results from Section~\ref{app:details_explicit_smoothing_unconstrained} to smooth.
The original unconstrained terms and their smoothed constraints and regularization terms can be found in Table~\ref{tab:example_of_explicit_smoothing}.

\begin{table}[t]
    \caption{Example converting CNN layers to smoothed counterparts (additive constants omitted).}
    \label{tab:example_of_explicit_smoothing}
    \centering
    $\begin{array}{c|cc}
        \multicolumn{1}{c}{\multirow{2}{*}{\parbox{0.15\linewidth}{\vspace{5pt}\centering\textbf{Original}\\\textbf{Constraint}}}}
        &\multicolumn{2}{c}{\textbf{Smoothed}}\\\cmidrule(lr){2-3}
        \multicolumn{1}{c}{}&\multicolumn{1}{c}{\textbf{Constraint}}&\multicolumn{1}{c}{\textbf{Regularizer}}\\\specialrule{.1em}{.05em}{.05em}
        \|x_0*\theta_1+b_1-x_1\|^2_F       &\|x_0*\theta_1+b_1 - x_1\|^2_F&\\
        \|h(x_1)-x_1^h\|^2_F               &\|h_{\sigma}(x_1)-x_1^h\|_F^2&+\|\sqrt{(h^2)_{\sigma}}(x_1)\|_F^2-\|h_{\sigma}(x_1)\|_F^2\\
        \|\textup{Pool}(x_1^h)-x_2\|^2_F   &\|\textup{Pool}(x_1^h)-x_2\|_F^2&\\
        \|\textup{Flatten}(x_2)-x_3\|^2    &\|\textup{Flatten}(x_2)-x_3\|^2&\\
        \|\theta_4x_3+b_4-x_4\|^2          &\|\theta_4x_3+b_4-x_4\|^2&+\frac{\sigma^2}{2}\|\theta_4\|_F^2+\frac{\sigma^2d_4}{2}\|x_3\|^2\\
        \|h(x_4)-x_4^h\|^2                 &\|h_{\sigma}(x_4)-x_4^h\|^2&+\|\sqrt{(h^2)_{\sigma}}(x_4)\|^2-\|h_{\sigma}(x_4)\|^2\\
        \|\theta_5x_4^h+b_5-x_5\|^2        &\|\theta_5x_4^h+b_5-x_5\|^2&+\frac{\sigma^2}{2}\|\theta_5\|_F^2+\frac{\sigma^2d_5}{2}\|x_4\|^2
    \end{array}$
\end{table}

Practically, here are the steps that we follow to get the explicitly smooth network:
\begin{enumerate}
    \item Decide on the network structure making sure to split activation functions into their own terms (as mentioned before)
    \item Use Table~\ref{tab:summary_of_nnet_smoothing_names} to convert to the smoothed network
    \item Create code for smooth network using appropriate regularization terms
\end{enumerate}

Moving onto the details of our experiments, for all of our experiments we use the CNN architecture in Table~\ref{tab:cnn_architecture}.
The resulting smooth CNN architecture can be found in Table~\ref{tab:smooth_cnn_architecture}.
All training used a batch size of 1 and early stopping with a patience of 2 epochs based on validation loss (up to 25 epochs).
For the smooth architecture, four regularization weights need to be chosen, one for each of the following: first ReLU layer ($\lambda_2$), first dense layer ($\lambda_5$), second ReLU layer ($\lambda_6$), and output layer ($\lambda_7$).
Based on the construction of our particular CNN, no regularization term was needed for the convolutional layer.
A coarse hyperparameter search was done for the learning rate (options: $10^{-3}$, $10^{-4}$, and $10^{-5}$) and regularization coefficients (options: $10^{-3}$, $10^{-5}$, $10^{-7}$, $10^{-9}$) using 5-fold cross validation on the training set.
The optimal hyperparameters that were used for the experiments are shown in Table~\ref{tab:smooth_cnn_hyperparameters}.
The default values for any additional hyperparameters available in Tensorflow, but not mentioned here were used for these methods.
Additionally, the hyperparameters for when $\sigma=0.01$ are those found for $\sigma=0.1$, no additional search was performed for this $\sigma$-value.
Finally, the learning rate for the CIFAR-10 experiments using Adam were reduced by a factor of $10^{-1}$ based on additional 5-fold cross validation on the training set using only the unsmoothed CNN.
Code for the experiments is available at \url{https://github.com/acstarnes/GSmoothSGD}.

We ran the MNIST and CIFAR-10 experiments using $\sigma=0,0.01,0.1,0.5,1$.
Figure~\ref{fig:noisy_mnist_images} contains example of the noisy MNIST images and Figure~\ref{fig:noisy_cifar10_images} shows examples for CIFAR-10.
The full heatmaps of the experiments are shown in Figures~\ref{fig:full_noise_mnist} and~\ref{fig:full_noise_cifar10}.

\begin{table}[t]
    \caption{CNN architecture for experiments}
    \label{tab:cnn_architecture}
    \centering
    \begin{tabular}{cc|l}
         \textbf{Layer}&\textbf{Layer}&\textbf{Layer}\\
         \textbf{Number}&\textbf{Name}&\textbf{Properties}\\\specialrule{.1em}{.05em}{.05em}
         0&Input&MNIST or CIFAR10 images\\\specialrule{.1em}{.05em}{.05em}
         1&Convolution&32 $4\times 4$ convolutions with stride of 1 and `valid' padding\\\specialrule{.1em}{.05em}{.05em}
         2&Activation&ReLU\\\specialrule{.1em}{.05em}{.05em}
         3&Pooling&Average pooling with $2\times 2$ pool size and stride of 2\\\specialrule{.1em}{.05em}{.05em}
         4&Flatten&\\\specialrule{.1em}{.05em}{.05em}
         5&Dense&128 neurons\\\specialrule{.1em}{.05em}{.05em}
         6&Activation&ReLU\\\specialrule{.1em}{.05em}{.05em}
         7&Dense&10 neurons\\
    \end{tabular}
\end{table}

\begin{table}[t]
    \caption{Smooth CNN architecture for experiments. ``Activity'' and ``Kernel'' indicate regularization terms applied to the layer's output and weights, respectively. The regularization coefficients are $\lambda_l$ where $l$ indicates the layer where the regularization term comes from.}
    \label{tab:smooth_cnn_architecture}
    \centering
    \begin{tabular}{cc|l}
         \textbf{Layer}&\textbf{Layer}&\textbf{Layer}\\
         \textbf{Number}&\textbf{Name}&\textbf{Properties}\\\specialrule{.1em}{.05em}{.05em}
         0&Input&MNIST or CIFAR10 images\\\specialrule{.1em}{.05em}{.05em}
         \multirow{2}{*}{1}&\multirow{2}{*}{Convolution}&32 $4\times 4$ convolutions with stride of 1 and `valid' padding\\\cline{3-3}
         &&Activity ($\lambda_2$): $\|\sqrt{(\relu^2)_{\sigma}}(x)\|_F^2+\|\relu_{\sigma}(x)\|_F^2$\\\specialrule{.1em}{.05em}{.05em}
         2&Activation&ReLU\\\specialrule{.1em}{.05em}{.05em}
         3&Pooling&Average pooling with $2\times 2$ pool size and stride of 2\\\specialrule{.1em}{.05em}{.05em}
         4&Flatten&Activity ($\lambda_5$): $\frac{\sigma^2\cdot 128}{2}\|x\|$\\\specialrule{.1em}{.05em}{.05em}
         \multirow{3}{*}{5}&\multirow{3}{*}{Dense}&128 neurons\\\cline{3-3}
         &&Kernel ($\lambda_5$): $\frac{\sigma^2}{2}\|\theta\|_F^2$\\\cline{3-3}
         &&Activity ($\lambda_6$): $\|\sqrt{(\relu^2)_{\sigma}}(x)\|_F^2+\|\relu_{\sigma}(x)\|_F^2$\\\specialrule{.1em}{.05em}{.05em}
         \multirow{2}{*}{6}&\multirow{2}{*}{Activation}&ReLU\\\cline{3-3}
         &&Activity($\lambda_7$): $\frac{\sigma^2\cdot 5}{2}\|x\|$\\\specialrule{.1em}{.05em}{.05em}
         \multirow{2}{*}{7}&\multirow{2}{*}{Dense}&10 neurons\\\cline{3-3}
         &&Kernel($\lambda_7$): $\frac{\sigma^2}{2}\|\theta\|_F^2$\\
    \end{tabular}
\end{table}

\begin{table}[t]
    \caption{Smooth CNN hyperparameters tuned from 5-fold cross validation on MNIST experiments. The regularization coefficients are $\lambda_l$ where $l$ indicates the layer where the regularization term comes from with Table~\ref{tab:smooth_cnn_architecture} showing the layer where they are used.}
    \label{tab:smooth_cnn_hyperparameters}
    \centering
    \begin{tabular}{cccccccc}
        &&\textbf{MNIST}&\textbf{CIFAR-10}&\multicolumn{4}{c}{\textbf{Regularization}}\\
        &&\textbf{Learning}&\textbf{Learning}&\multicolumn{4}{c}{\textbf{Coefficients}}\\\cline{5-8}
        \textbf{Optimizer}&$\bm{\sigma}$&\textbf{Rate}&\textbf{Rate}&$\bm{\lambda}_{\bm{2}}$&$\bm{\lambda}_{\bm{5}}$&$\bm{\lambda}_{\bm{6}}$&$\bm{\lambda}_{\bm{7}}$\\\specialrule{.1em}{.05em}{.05em}
        SGD &0  &$10^{-4}$&$10^{-4}$&$-$      &$-$      &$-$      &$-$      \\\hline
        SGD &0.1&$10^{-2}$&$10^{-2}$&$10^{-5}$&$10^{-7}$&$10^{-5}$&$10^{-7}$\\\hline
        SGD &0.5&$10^{-3}$&$10^{-3}$&$10^{-7}$&$10^{-7}$&$10^{-7}$&$10^{-5}$\\\hline
        SGD &1  &$10^{-4}$&$10^{-4}$&$10^{-7}$&$10^{-7}$&$10^{-7}$&$10^{-5}$\\\specialrule{.1em}{.05em}{.05em}
        Adam&0  &$10^{-3}$&$10^{-4}$&$-$      &$-$      &$-$      &$-$      \\\hline
        Adam&0.1&$10^{-3}$&$10^{-4}$&$10^{-7}$&$10^{-7}$&$10^{-5}$&$10^{-5}$\\\hline
        Adam&0.5&$10^{-3}$&$10^{-4}$&$10^{-7}$&$10^{-7}$&$10^{-7}$&$10^{-5}$\\\hline
        Adam&1  &$10^{-3}$&$10^{-4}$&$10^{-7}$&$10^{-7}$&$10^{-7}$&$10^{-7}$
    \end{tabular}
\end{table}
\clearpage

\begin{figure}
    \begin{subfigure}{\linewidth}
        \centering
        \includegraphics[width=0.98\linewidth]{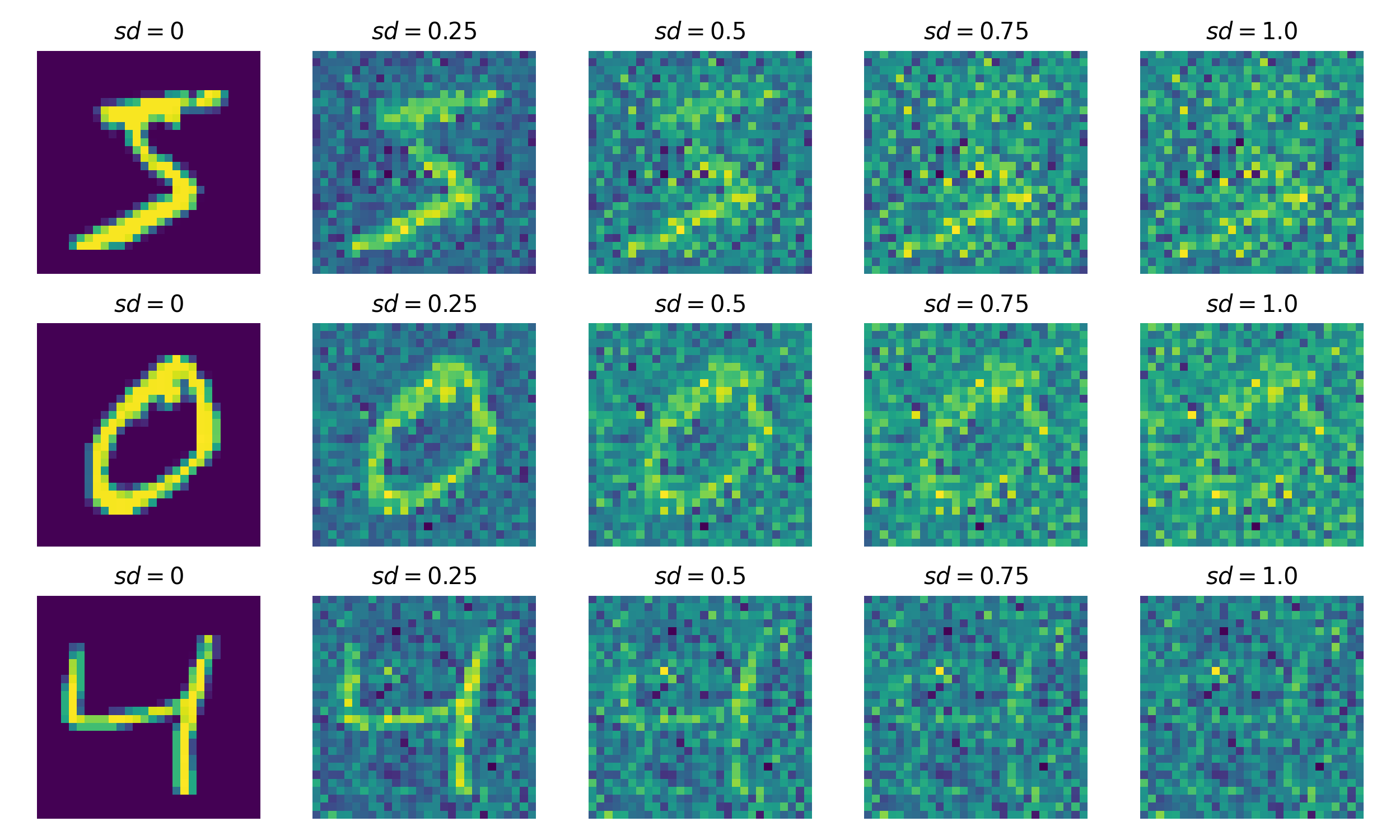}
        \caption{MNIST Noisy Images}
        \label{fig:noisy_mnist_images}
    \end{subfigure}
    \begin{subfigure}{\linewidth}
        \centering
        \includegraphics[width=0.98\linewidth]{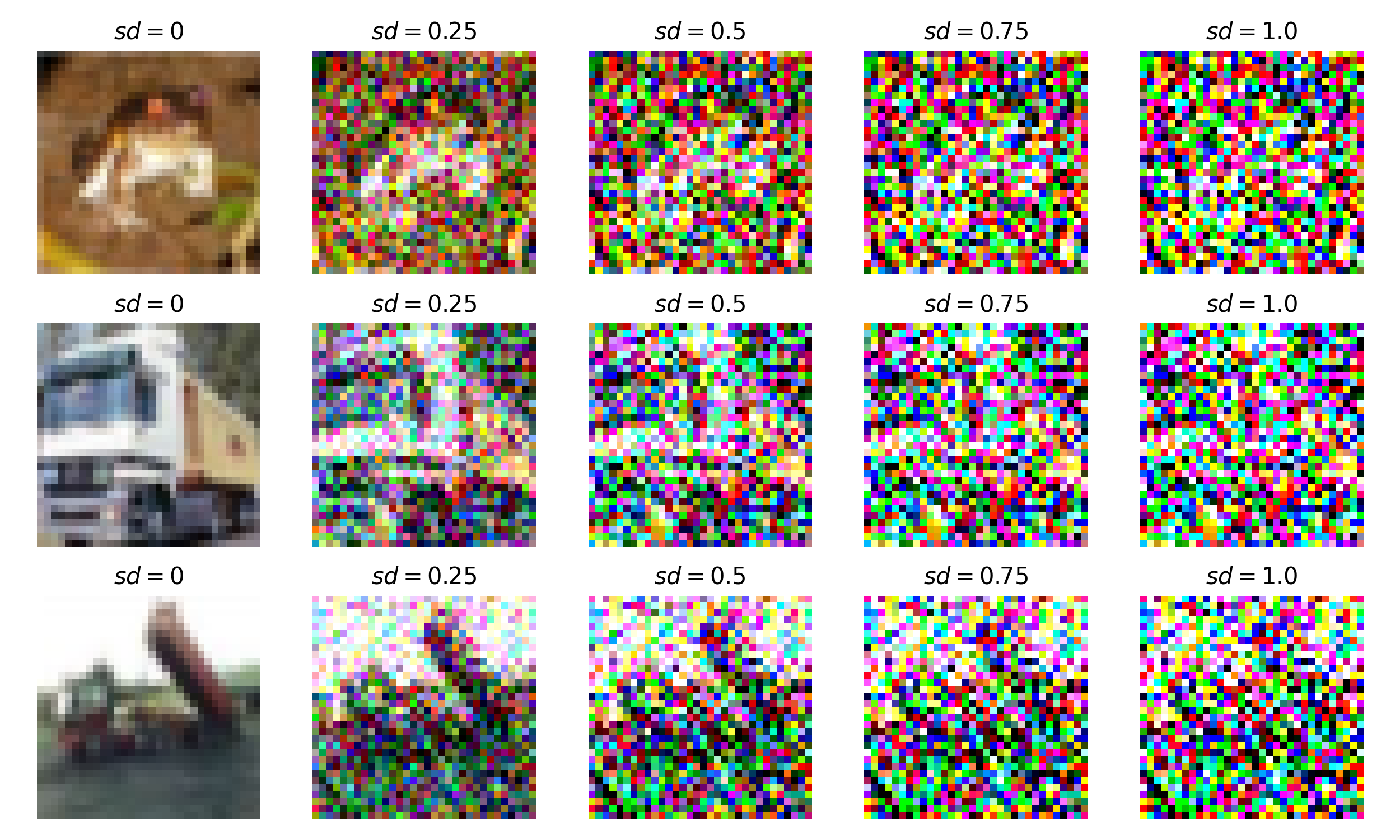}
        \caption{CIFAR-10 Noisy Images}
        \label{fig:noisy_cifar10_images}
    \end{subfigure}
    \caption{Examples of noisy images of MNIST and CIFAR-10 for noise standard deviations (sd)}
    \label{fig:noisy_images}
\end{figure}
\clearpage

\begin{figure}[t]
    \centering
    \includegraphics[height=0.95\textheight]{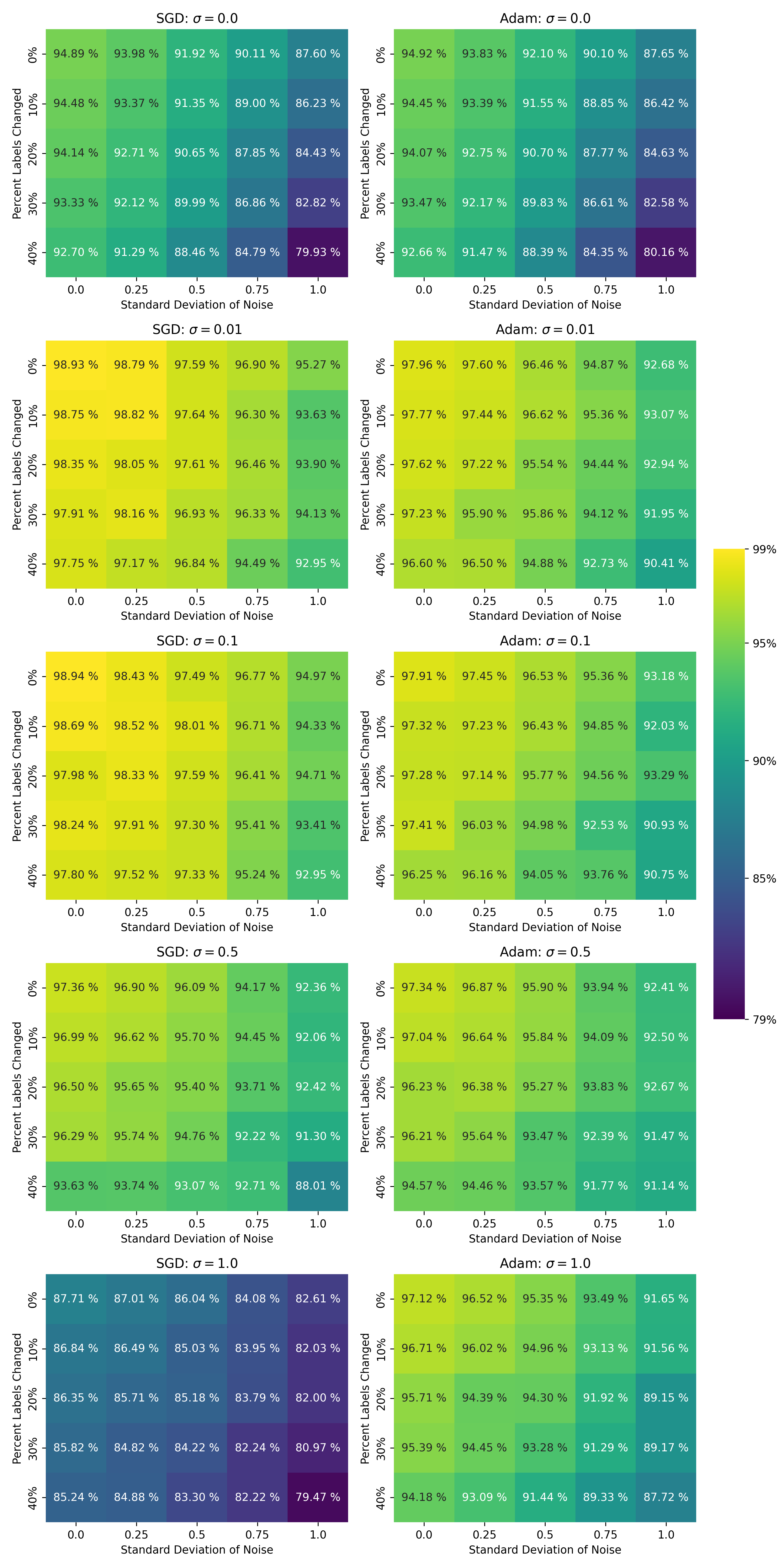}
    \caption{Test Accuracy for 25 SGD and Adam Experiments using MNIST}
    \label{fig:full_noise_mnist}
\end{figure}
\clearpage

\begin{figure}[t]
    \centering
    \includegraphics[height=0.95\textheight]{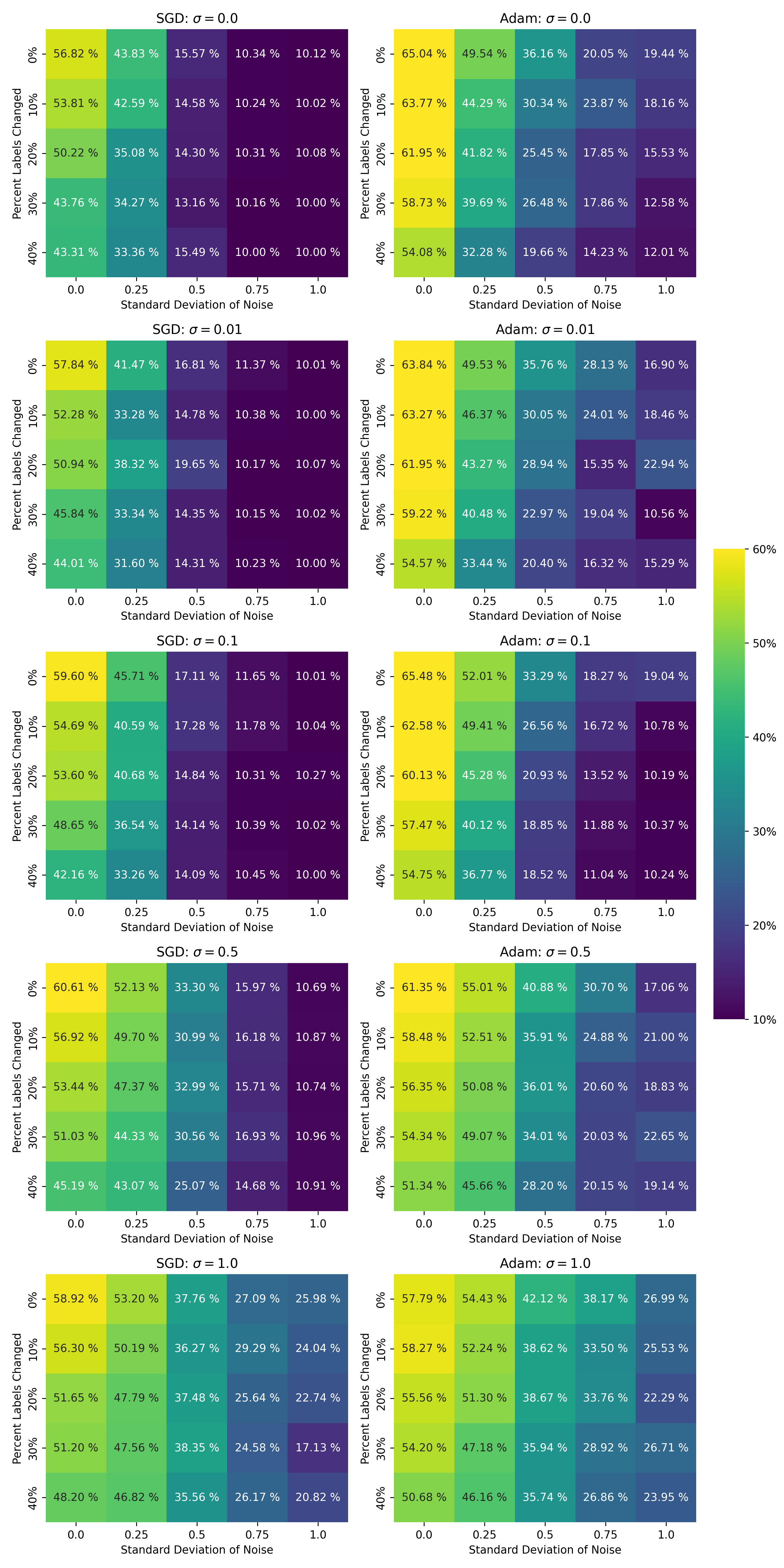}
    \caption{Test Accuracy for 25 SGD and Adam Experiments using CIFAR-10}
    \label{fig:full_noise_cifar10}
\end{figure}
\clearpage

\section{Gaussian smoothing stochastic variance reduced gradient (GSmoothSVRG)}
\label{app:convergence_svrg}

\begin{algorithm}[t]
    \caption{GSmoothSVRG}
    \label{alg:ssvrg}
    \begin{algorithmic}[1]
        \Require{$\widetilde{\bmx}_0\in\mathbb{R}^d$, $\sigma_s\geq 0$ for $s=0,1,...$}
        \For{$s=1,2,...$}
            \State $\widetilde{\bmx}=\widetilde{\bmx}_{s-1}$, $\bmx_0=\widetilde{\bmx}$, $\tau=\sigma_s$
            \State $\widetilde{\bmmu}_{\sigma_s}=\frac{1}{K}\sum_{i=1}^{K}\nabla f_{i,\sigma_s}(\widetilde{\bmx})$
            \For{$t=1,...,m$}
                \State $i_t\sim\text{Unif}[K]$
                \State $\bmv_t^{\sigma_s,\tau}=\nabla f_{i_t,\sigma_s}(\bmx_{t-1})-\nabla f_{i_t,\tau}(\widetilde{\bmx})+\widetilde{\bmmu}_{\tau}$
                \State $\bmx_t=\bmx_{t-1}-\lrate \bmv_t^{\sigma_s,\tau}$
            \EndFor
            \State $\widetilde{\bmx}_{s}=\bmx_t$ for $t\sim\text{Unif}[K]$
        \EndFor
    \end{algorithmic}
\end{algorithm}

Another modification of SGD that addresses this the variation in the gradient near a minimum is stochastic variance reduced gradient (SVRG) \cite{JohnsonZhang13}, which modifies the gradient used in the update step using inner and outer loops.
The outer loop computes a control variate that reduces the variance of the iterates in the inner loop.
The inner loop performs SGD steps, but with the control variate added to each step.
So, the inner update becomes
\begin{align}
    \bmv_{t} &= \nabla f_{k_t}(\bmx_{t-1})-\nabla f_{k_t}(\widetilde{\bmx})+\widetilde{\mu}\\
    \bmx_{t} &= \bmx_{t-1} - \lrate \bmv_{t},
\end{align}
where $\widetilde{x}$ is the output of the previous inner iteration and $\widetilde{\mu}$ is the full gradient at $\widetilde{x}$.
Since the motivation for SVRG is variance reduction, just like SGD, SVRG has a tendency converge to non-global minima, making it another good candidate for using Gaussian smoothing.
GSmoothSGD suffers the same variance issues as SGD, so in order to combine the benefits of variance reduction and smoothing, we propose Gaussian smoothed SVRG (GSmoothSVRG) which can be found in Algorithm~\ref{alg:ssvrg}.

In this section, we provide convergence results for GSmoothSVRG and then provide numerical experiments that show how the variance is reduced compared to both SGD and GSmoothSGD.

\subsection{Convergence of GSmoothSVRG}

This section provides the proof of Theorem~\ref{thm:ssvrg} and the additional background results needed for the proof.
The original convergence result for SVRG is stated for strongly convex function, we will do the same for GSmoothSVRG, which means we need to show that smoothing preserves strong convexity as well.
Our second result shows just that.

\begin{lem}
\label{lem:fstronglyconvexthensoisfsigma}
If $f$ is $\gamma$-strongly convex, then so is $f_{\sigma}$.
\end{lem}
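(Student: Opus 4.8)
The plan is to reduce the claim to the already-established preservation of convexity in Lemma~\ref{lem:fsigmalsmoothandconvex}, exploiting the fact that the smoothing operator in \eqref{eq:f_sigma} acts \emph{linearly} on its argument. Recall that $f$ being $\gamma$-strongly convex is equivalent to the function $g(\bmx) := f(\bmx) - \frac{\gamma}{2}\|\bmx\|^2$ being convex. Since smoothing is an integral against a fixed kernel, it distributes over this difference, so I would write $f_{\sigma} = g_{\sigma} + \frac{\gamma}{2}\,(\|\cdot\|^2)_{\sigma}$ and handle the two pieces separately.

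First I would apply Lemma~\ref{lem:fsigmalsmoothandconvex} to the convex function $g$ to conclude that $g_{\sigma}$ is convex. Second, I would compute the smoothing of the quadratic $h(\bmx) = \|\bmx\|^2$ directly from the definition: expanding $\|\bmx + \sigma\bmu\|^2 = \|\bmx\|^2 + 2\sigma\langle \bmx, \bmu\rangle + \sigma^2\|\bmu\|^2$ and integrating against the Gaussian weight. The cross term vanishes by symmetry of the weight, the first term reproduces $\|\bmx\|^2$, and the last contributes $\sigma^2\, E_{\bmu\sim\mathcal{N}(0,\frac12 I_d)}[\|\bmu\|^2] = \frac{\sigma^2 d}{2}$. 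Hence $h_{\sigma}(\bmx) = \|\bmx\|^2 + \frac{\sigma^2 d}{2}$, which differs from $\|\bmx\|^2$ only by an additive constant.

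Combining these gives $f_{\sigma}(\bmx) - \frac{\gamma}{2}\|\bmx\|^2 = g_{\sigma}(\bmx) + \frac{\gamma\sigma^2 d}{4}$, a convex function plus a constant, hence convex; by the characterization above this is exactly the statement that $f_{\sigma}$ is $\gamma$-strongly convex. There is essentially no hard part here: the only points requiring care are the normalization of the kernel (the weight $\frac{1}{\pi^{d/2}}e^{-\|\bmu\|^2}$ corresponds to variance $\frac12$ per coordinate, so $E[\|\bmu\|^2] = \frac{d}{2}$ rather than $d$) and the observation that the resulting additive constant is irrelevant to convexity, so that no appeal to Hessians or second-order conditions is needed.
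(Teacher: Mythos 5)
Your proof is correct, but it takes a genuinely different route from the paper's. The paper argues directly from the definition: it writes $f_{\sigma}(t\bmx+(1-t)\bmy)$ as the smoothing integral, applies the strong-convexity inequality pointwise inside the integral, and exploits the key cancellation $\|(\bmx+\sigma\bmu)-(\bmy+\sigma\bmu)\|^2=\|\bmx-\bmy\|^2$ so that the modulus $\gamma$ passes through unchanged — structurally the same argument as its convexity proof, with the extra quadratic term carried along. You instead reduce to the already-proved convexity case via the characterization that $f$ is $\gamma$-strongly convex iff $g=f-\frac{\gamma}{2}\|\cdot\|^2$ is convex (which is equivalent to the paper's interpolation definition through the identity $t\|\bmx\|^2+(1-t)\|\bmy\|^2-\|t\bmx+(1-t)\bmy\|^2=t(1-t)\|\bmx-\bmy\|^2$, valid with no smoothness assumptions), then use linearity of the smoothing operator and the explicit computation $(\|\cdot\|^2)_{\sigma}(\bmx)=\|\bmx\|^2+\frac{\sigma^2 d}{2}$ — a formula the paper itself records later as Proposition~\ref{prop:results_from_mobahi}~(d). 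Your decomposition makes it transparent \emph{why} the modulus is preserved exactly: smoothing the quadratic only shifts it by a constant, which is irrelevant to convexity. What it costs is a mild extra hypothesis — you need the kernel to have mean zero and finite second moment (true for the Gaussian) and you implicitly use that $g_{\sigma}=f_{\sigma}-\frac{\gamma}{2}(\|\cdot\|^2)_{\sigma}$ is well defined — whereas the paper's pointwise cancellation argument needs no moment conditions at all. As a side remark, your route also sidesteps a sign typo in the paper's own display, which writes $+\frac{\gamma}{2}t(1-t)\|\bmx-\bmy\|^2$ where its definition of strong convexity requires $-$.
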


The proof is the same as the proof when $f$ is convex (see Lemma~\ref{lem:fsigmalsmoothandconvex}) where only a minor modification is made for the strong convexity which now includes the quadratic function that $f$ grows as fast as. We still provide the proof here.

\begin{proof}[Proof of Lemma~\ref{lem:fstronglyconvexthensoisfsigma}]
Assume that $f$ is $\gamma$-strongly convex.
Then
\begin{align}
    f_{\sigma}(t\bmx+(1-t)\bmy)
    &=\frac{1}{\pi^{\nicefrac{d}{2}}}\int_{\mathbb{R}^d}f(t\bmx+(1-t)\bmy+\sigma\bmu)e^{-\|\bmu\|^2}\;du\\
    &=\frac{1}{\pi^{\nicefrac{d}{2}}}\int_{\mathbb{R}^d}f\Big(t(\bmx+\sigma\bmu)+(1-t)(\bmy+\sigma\bmu)\Big)e^{-\|\bmu\|^2}\;du\\
    &\leq\frac{1}{\pi^{\nicefrac{d}{2}}}\int_{\mathbb{R}^d}\Big(
        tf(\bmx+\sigma\bmu)
        +(1-t)f(\bmy+\sigma\bmu)\\
        &\qquad\qquad+\frac{\gamma}{2}t(1-t)\|(\bmx+\sigma\bmu)-(\bmy+\sigma\bmu)\|^2
    \Big)e^{-\|\bmu\|^2}\;du\\
    &=tf_{\sigma}(\bmx)+(1-t)f_{\sigma}(\bmy)+\frac{\gamma}{2}t(1-t)\|\bmx-\bmy\|^2
\end{align}
shows that $f_{\sigma}$ is also $\gamma$-strongly convex.
\end{proof}

We now show that adding variance reduction using SVRG to GSmoothSGD or equivalently smoothing SVRG does not change the convergence rate from the original SVRG for strongly convex and $L$-smooth functions.
In particular, compared with GSmoothSGD, GSmoothSVRG will converge for a fixed learning rate.

\begin{thm}\label{thm:ssvrg}
Consider GSmoothSVRG in Algorithm~\ref{alg:ssvrg}.
Assume $f_i$ is convex and $L$-smooth and $f=\frac{1}{K}\sum_{k=1}^{K}f_{k}$ is $\gamma$-strongly convex (for some $\gamma>0$).
Assume $m$ is sufficiently large so that
\begin{equation}
    \alpha=\frac{1+2L\lrate^2m}{\lrate\gamma(1-2L\lrate)m}<1.
\end{equation}
Then
\begin{equation}
    E(f(\widetilde{\bmx}_s)-f(\bmx_*))
    \leq\alpha^s E(f_{\sigma_0}(\widetilde{\bmx}_0)-f(\bmx_*))
    +\frac{Ld}{2}\sum_{i=1}^{s}\alpha^i\max(0,\sigma_{i-1}^{2}-\sigma_i^2).
\end{equation}
\end{thm}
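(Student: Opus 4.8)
The plan is to exploit that, within a single outer iteration, GSmoothSVRG \emph{is} ordinary SVRG run on the smoothed objective $f_{\sigma_s}$, and then to stitch the outer iterations together while paying for the change of smoothing parameter. First I would make the reduction precise. Fix outer iteration $s$; since $\tau=\sigma_s$, the control variate equals $\frac{1}{K}\sum_i \nabla f_{i,\sigma_s}(\widetilde{\bmx})=\nabla f_{\sigma_s}(\widetilde{\bmx})$, so the inner direction $\bmv_t=\nabla f_{i_t,\sigma_s}(\bmx_{t-1})-\nabla f_{i_t,\sigma_s}(\widetilde{\bmx})+\nabla f_{\sigma_s}(\widetilde{\bmx})$ satisfies $E_{i_t}[\bmv_t]=\nabla f_{\sigma_s}(\bmx_{t-1})$. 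By Lemma~\ref{lem:fsigmalsmoothandconvex} each $f_{i,\sigma_s}$ is convex and $L$-smooth, so $f_{\sigma_s}=\frac1K\sum_i f_{i,\sigma_s}$ is an average of convex $L$-smooth functions, and by Lemma~\ref{lem:fstronglyconvexthensoisfsigma} it is $\gamma$-strongly convex. Hence the inner loop is exactly SVRG applied to the finite sum $f_{\sigma_s}$, with every hypothesis of the classical analysis in force.

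Next I would reproduce the one-epoch SVRG estimate for $f_{\sigma_s}$, taking the reference point to be $\bmx_*^{\sigma_s}:=\argmin f_{\sigma_s}$ and writing $f_{\sigma_s}^*=f_{\sigma_s}(\bmx_*^{\sigma_s})$. The standard steps are: bound $E\|\bmv_t\|^2$ by $4L\big(f_{\sigma_s}(\bmx_{t-1})-f_{\sigma_s}^*+f_{\sigma_s}(\widetilde{\bmx})-f_{\sigma_s}^*\big)$ using convexity and $L$-smoothness of the $f_{i,\sigma_s}$; expand $E\|\bmx_t-\bmx_*^{\sigma_s}\|^2$ and invoke unbiasedness together with convexity of $f_{\sigma_s}$; telescope over $t=1,\dots,m$; and finally use $\gamma$-strong convexity of $f_{\sigma_s}$ and the uniform random choice of the output. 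This yields the per-epoch contraction
\[
E\big[f_{\sigma_s}(\widetilde{\bmx}_s)-f_{\sigma_s}^*\big]\le \alpha\, E\big[f_{\sigma_s}(\widetilde{\bmx}_{s-1})-f_{\sigma_s}^*\big],
\]
with $\alpha$ exactly as in the statement; this is where the full expression for $\alpha$ is produced, and when $\sigma_s\equiv 0$ it reduces to the original SVRG guarantee.

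The crux is the bridging step, because the reference function, its minimizer, and its minimum value all move between iterations $s-1$ and $s$. The right-hand side above evaluates $f_{\sigma_s}$ at the \emph{previous} iterate $\widetilde{\bmx}_{s-1}$, so I would re-express it in terms of the quantity carried over from the previous epoch, $f_{\sigma_{s-1}}(\widetilde{\bmx}_{s-1})-f_{\sigma_{s-1}}^*$. Lemma~\ref{lem:differentsmoothingvalues} controls both $|f_{\sigma_s}(\widetilde{\bmx}_{s-1})-f_{\sigma_{s-1}}(\widetilde{\bmx}_{s-1})|$ and $|f_{\sigma_s}^*-f_{\sigma_{s-1}}^*|$ by $\tfrac{Ld}{4}|\sigma_{s-1}^2-\sigma_s^2|$, and these two applications combine into the coefficient $\tfrac{Ld}{2}$. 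In the non-increasing regime emphasized in the introduction one has $\sigma_s\le\sigma_{s-1}$, so $|\sigma_{s-1}^2-\sigma_s^2|=\max(0,\sigma_{s-1}^2-\sigma_s^2)$; the $\max$ reflects that only smoothing \emph{decreases} incur a penalty, a sign fact that is underwritten by the monotonicity $f_{\sigma_s}\le f_{\sigma_{s-1}}$ (obtained by viewing $f_{\sigma_{s-1}}$ as $f_{\sigma_s}$ further convolved with a Gaussian of width $\sqrt{\sigma_{s-1}^2-\sigma_s^2}$ and applying Lemma~\ref{lem:fsigmagreaterthanf}). Converting the two endpoints to the true optimum $f(\bmx_*)$ through $f\le f_{\sigma}$ (Lemma~\ref{lem:fsigmagreaterthanf}) — on the left to replace $f_{\sigma_s}(\widetilde{\bmx}_s)$ by $f(\widetilde{\bmx}_s)$, and on the right to absorb the minimum gaps into the $f_{\sigma_0}$ initial term — produces a scalar recursion of the form $D_s\le \alpha D_{s-1}+\tfrac{Ld}{2}\max(0,\sigma_{s-1}^2-\sigma_s^2)$, which I would unroll into the stated geometric sum over the smoothing decrements.

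I expect this bridging bookkeeping to be the main obstacle: the SVRG contraction is routine once the reduction is in place, but tracking the moving minimizer and minimum value of $f_{\sigma_s}$ across epochs, keeping the signs of the value-change and minimum-change terms straight so that exactly the $\max(0,\cdot)$ penalty survives, and aligning the resulting geometric weights with the claimed bound, is where the care is concentrated.
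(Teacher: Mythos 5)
Your first two steps are sound, and they take a genuinely different route from the paper. The paper never introduces the smoothed minimizer $\bmx_*^{\sigma_s}$ or the smoothed minimum value $f_{\sigma_s}^*:=\min f_{\sigma_s}$ at all: it re-proves the four Johnson--Zhang lemmas (Lemmas~\ref{lem:ssvrgstep1}--\ref{lem:ssvrgstep4}) with the \emph{unsmoothed} minimizer $\bmx_*$ and value $f(\bmx_*)$ as a fixed anchor, so the quantity that contracts is the mixed gap $E(f_{\sigma_s}(\cdot)-f(\bmx_*))$; the nonnegativity needed in Lemma~\ref{lem:ssvrgstep1} comes from $f_{i,\sigma}\geq f_i$ (Lemma~\ref{lem:fsigmagreaterthanf}), and strong convexity enters exactly once, at $\bmx_*$, in Lemma~\ref{lem:ssvrgstep4}. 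You instead black-box the classical SVRG contraction on $f_{\sigma_s}$ anchored at $f_{\sigma_s}^*$, and your cross-epoch bridge (two applications of Lemma~\ref{lem:differentsmoothingvalues}, one for the value at $\widetilde{\bmx}_{s-1}$ and one for the moving minimum, each costing $\tfrac{Ld}{4}|\sigma_{s-1}^2-\sigma_s^2|$) is correct as far as it goes.

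The gap is in your final conversion, and it cannot be repaired within your decomposition. Your recursion controls $D_s=E(f_{\sigma_s}(\widetilde{\bmx}_s))-f_{\sigma_s}^*$, while the theorem's left-hand side is $E(f(\widetilde{\bmx}_s))-f(\bmx_*)$. The inequality $f\leq f_{\sigma_s}$ lets you replace the evaluated function, but the subtracted constants go the \emph{wrong} way: $f(\bmx_*)=\min f\leq\min f_{\sigma_s}=f_{\sigma_s}^*$, so $E(f(\widetilde{\bmx}_s))-f(\bmx_*)\leq D_s+\bigl(f_{\sigma_s}^*-f(\bmx_*)\bigr)$, and the leftover $f_{\sigma_s}^*-f(\bmx_*)\in\bigl[0,\tfrac{Ld}{4}\sigma_s^2\bigr]$ is strictly positive in general (for a quadratic with curvature $L$ it equals $\tfrac{Ld}{4}\sigma_s^2$ exactly). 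It cannot be ``absorbed into the $f_{\sigma_0}$ initial term'': that absorption is legitimate on the right-hand side, since $D_0\leq E(f_{\sigma_0}(\widetilde{\bmx}_0))-f(\bmx_*)$, but the left-hand leftover is attached to time $s$, carries no factor $\alpha^s$, and is not dominated by the decrement sum --- take $\sigma_s\equiv\sigma_0>0$, where every $\max(0,\sigma_{i-1}^2-\sigma_i^2)$ vanishes while your leftover is a fixed positive constant. So your route proves the statement only with an extra additive $\tfrac{Ld}{4}\sigma_s^2$ term. This is not a bookkeeping failure: the inner loop is SVRG on $f_{\sigma_s}$ and therefore contracts toward $\bmx_*^{\sigma_s}$, so $E(f(\widetilde{\bmx}_s))-f(\bmx_*)$ genuinely stagnates near $f(\bmx_*^{\sigma_s})-f(\bmx_*)$ when $\sigma$ is held fixed; no argument anchored at $f_{\sigma_s}^*$ can remove that floor. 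The paper reaches the stated bound only because Lemma~\ref{lem:ssvrgstep4} applies strong convexity of $f_{\sigma}$ \emph{at} $\bmx_*$, writing $E\|\bmx_0-\bmx_*\|^2\leq\frac{2}{\gamma}E(f_{\sigma}(\widetilde{\bmx})-f_{\sigma}(\bmx_*))$, a step that silently discards the cross term $\langle\nabla f_{\sigma}(\bmx_*),\widetilde{\bmx}-\bmx_*\rangle$, which has no sign in general because $\bmx_*$ does not minimize $f_{\sigma}$; your unremovable $O(Ld\sigma_s^2)$ term is precisely the honest price of that step.
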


The proof of this theorem is broken into four lemmas whose proofs follow the same structure as in~\cite{JohnsonZhang13} with adaptions for smoothing.
In particular, we mimic the proof that SVRG converges for strongly convex and $L$-smooth functions from~\cite{JohnsonZhang13}, which is broken into four lemmmas, and smooth when necessary.
The four lemmas build on each other culminating in the fact that the iterates are getting closer to the minimum.
The proof of the theorem then iteratively applies this result to get the claimed bound.
We begin by justifying the first lemma which states a bound between the difference in the gradients of the iterates and the minimum.

\begin{lem}\label{lem:ssvrgstep1}
For each $i\in\{1,...,K\}$, let $f_i$ be $L$-smooth and convex. 
Then for any $\sigma\geq 0$,
\begin{equation}
    E(\|\nabla f_{i,\sigma}(\bmx)-\nabla f_{i}(\bmx_*)\|^2)
    \leq 2L(f_{\sigma}(\bmx)-f(\bmx_*)).
\end{equation}
Furthermore, for $\sigma\geq\tau\geq 0$,
\begin{equation}
    E(\|\nabla f_{i,\sigma}(\bmx)-\nabla f_{i,\tau}(\bmx_*^{\tau})\|^2)
    \leq 4L(f_{\sigma}(\bmx)-f(\bmx_*))
\end{equation}
where $\bmx_*^{\tau}$ is the minimizer of $f_{\tau}$.
\end{lem}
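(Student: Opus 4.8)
The plan is to adapt the corresponding lemma from~\cite{JohnsonZhang13}, whose key ingredient is the standard co-coercivity bound for a convex, $L$-smooth function $f_i$,
\begin{equation}
    \tfrac{1}{2L}\|\nabla f_i(\bmy)-\nabla f_i(\bmx_*)\|^2
    \leq f_i(\bmy)-f_i(\bmx_*)-\langle\nabla f_i(\bmx_*),\bmy-\bmx_*\rangle,
\end{equation}
valid for every $\bmy$. For the first inequality I would write the smoothed gradient as the Gaussian average $\nabla f_{i,\sigma}(\bmx)=\frac{1}{\pi^{d/2}}\int_{\mathbb{R}^d}\nabla f_i(\bmx+\sigma\bmu)e^{-\|\bmu\|^2}\,d\bmu$ (interchanging gradient and integral as done elsewhere in the paper), subtract $\nabla f_i(\bmx_*)$ inside the integral, and apply Jensen's inequality for $\|\cdot\|^2$ against the Gaussian probability measure to obtain
\begin{equation}
    \|\nabla f_{i,\sigma}(\bmx)-\nabla f_i(\bmx_*)\|^2
    \leq\frac{1}{\pi^{d/2}}\int_{\mathbb{R}^d}\|\nabla f_i(\bmx+\sigma\bmu)-\nabla f_i(\bmx_*)\|^2 e^{-\|\bmu\|^2}\,d\bmu.
\end{equation}

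Next I would apply the co-coercivity bound pointwise at $\bmy=\bmx+\sigma\bmu$ and integrate against the Gaussian weight. The constant and linear terms integrate cleanly: since the Gaussian has mean zero, $\int_{\mathbb{R}^d}\bmu\, e^{-\|\bmu\|^2}\,d\bmu=0$ kills the $\sigma\bmu$ contribution of the inner product, leaving $\tfrac{1}{2L}\|\nabla f_{i,\sigma}(\bmx)-\nabla f_i(\bmx_*)\|^2\leq f_{i,\sigma}(\bmx)-f_i(\bmx_*)-\langle\nabla f_i(\bmx_*),\bmx-\bmx_*\rangle$. Taking the expectation over $i\sim\mathrm{Unif}[K]$ and using $E(f_{i,\sigma}(\bmx))=f_\sigma(\bmx)$, $E(f_i(\bmx_*))=f(\bmx_*)$, and $E(\nabla f_i(\bmx_*))=\nabla f(\bmx_*)=0$ (as $\bmx_*$ minimizes the convex $f$) removes the linear term and yields the first claim.

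For the second inequality I would reduce to the first by invoking the Gaussian semigroup property (Lemma 2.8 of~\cite{gsgd}, as already used in the proof of Lemma~\ref{lem:difference_between_smoothed_gradients}): with $\eta=\sqrt{\sigma^2-\tau^2}$ one has $(f_{i,\tau})_\eta=f_{i,\sigma}$ for each $i$. Since every $f_{i,\tau}$ is again convex and $L$-smooth by Lemma~\ref{lem:fsigmalsmoothandconvex}, applying the first inequality to the family $\{f_{i,\tau}\}_i$ with smoothing parameter $\eta$ and the minimizer $\bmx_*^\tau$ of $f_\tau=\frac{1}{K}\sum_i f_{i,\tau}$ gives
\begin{equation}
    E(\|\nabla f_{i,\sigma}(\bmx)-\nabla f_{i,\tau}(\bmx_*^\tau)\|^2)
    \leq 2L\big(f_\sigma(\bmx)-f_\tau(\bmx_*^\tau)\big).
\end{equation}
It then remains to show $f_\sigma(\bmx)-f_\tau(\bmx_*^\tau)\leq 2\big(f_\sigma(\bmx)-f(\bmx_*)\big)$, i.e.\ $2f(\bmx_*)-f_\tau(\bmx_*^\tau)\leq f_\sigma(\bmx)$. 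By Lemma~\ref{lem:fsigmagreaterthanf} the smoothing is pointwise non-decreasing, $f_\tau\geq f$, so $f_\tau(\bmx_*^\tau)=\min_{\bmy}f_\tau(\bmy)\geq \min_{\bmy}f(\bmy)=f(\bmx_*)$, and likewise $f_\sigma(\bmx)\geq f(\bmx)\geq f(\bmx_*)$; combining these two facts closes the bound and produces the factor $4L$. I expect the main obstacle to be the bookkeeping in this second part — correctly invoking the semigroup identity on each summand and then threading the function-value inequality through the different smoothing levels — whereas the first part is a routine Jensen-plus-co-coercivity argument once the vanishing of the odd Gaussian moment is observed.
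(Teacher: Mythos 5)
Your proposal is correct, and its second half takes a genuinely different --- and in fact sounder --- route than the paper's own proof. For the first inequality you and the paper converge on the same intermediate bound, $\|\nabla f_{i,\sigma}(\bmx)-\nabla f_i(\bmx_*)\|^2\leq 2L\big(f_{i,\sigma}(\bmx)-f_i(\bmx_*)-\langle\nabla f_i(\bmx_*),\bmx-\bmx_*\rangle\big)$, just by different means: the paper introduces the auxiliary function $g_i^{\sigma}(\bmx)=f_{i,\sigma}(\bmx)-f_i(\bmx_*)-\langle\nabla f_i(\bmx_*),\bmx-\bmx_*\rangle$, shows it is nonnegative (using $f_{i,\sigma}\geq f_i$ from Lemma~\ref{lem:fsigmagreaterthanf}) and $L$-smooth, and applies the descent lemma to it, whereas you integrate the pointwise co-coercivity of $f_i$ against the Gaussian and invoke Jensen plus the vanishing odd moment; both are valid, and your version does not even need the monotonicity $f_{i,\sigma}\geq f_i$ at this stage. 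For the ``furthermore'' inequality the difference is substantive. The paper inserts $\nabla f_i(\bmx_*)$ and splits the squared norm as $\|a-b\|^2\leq\|a-c\|^2+\|c-b\|^2$, which is not a valid inequality (the correct version carries factors of $2$, and with them the paper's argument only yields a constant of $8L$); it then applies the first part twice and uses $f_{\tau}(\bmx_*^{\tau})\leq f_{\sigma}(\bmx)$. Your route --- applying the first part to the family $\{f_{i,\tau}\}_i$, which is convex and $L$-smooth by Lemma~\ref{lem:fsigmalsmoothandconvex}, with smoothing parameter $\eta=\sqrt{\sigma^2-\tau^2}$ via the semigroup identity $(f_{i,\tau})_{\eta}=f_{i,\sigma}$ of Lemma 2.8 in \cite{gsgd}, and then closing with $f_{\tau}(\bmx_*^{\tau})\geq f(\bmx_*)$ and $f_{\sigma}(\bmx)\geq f(\bmx_*)$ --- is fully rigorous, uses only tools the paper has already established, and actually delivers the claimed constant $4L$, indeed the sharper intermediate bound $2L\big(f_{\sigma}(\bmx)-f_{\tau}(\bmx_*^{\tau})\big)$. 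So your proposal not only proves the lemma but repairs the gap in the paper's own argument for the second inequality; the only point worth making explicit in a write-up is the justification for interchanging $\nabla$ with the Gaussian integral (dominated convergence, using $\|\nabla f_i(\bmx+\sigma\bmu)\|\leq\|\nabla f_i(\bmx)\|+L\sigma\|\bmu\|$), and the observation that $\nabla f_{\tau}(\bmx_*^{\tau})=0$ so that the linear term still vanishes when the first part is re-applied at level $\tau$.
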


We can adapt the above to get the following statements as well:
\begin{equation}
    E(\|\nabla f_{i,\sigma}(\bmx)-\nabla f_{i,\tau}(\bmx_*)\|^2)\\
    \leq 2L(f_{\sigma}(\bmx)-f(\bmx_*))+\frac{1}{2}\tau^2L^2d
\end{equation}
\begin{equation}
    E(\|\nabla f_{i,\sigma}(\bmx)-\nabla f_{i,\tau}(\bmx)\|^2)\leq 4L(f_{\sigma}(\bmx)-f(\bmx_*))
\end{equation}

\begin{proof}
Note that since each $f_k$ is convex, $f$ and $f_{k,\sigma}$ are convex for any $\sigma\geq 0$. This means that $f_{\sigma}$ is also convex for any $\sigma\geq 0$.
Let
\begin{equation}
    g_i^{\sigma}(\bmx)=f_{i,\sigma}(\bmx)-f_i(\bmx_*)-\langle\nabla f_i(\bmx_*),\bmx-\bmx_*\rangle.
\end{equation}
Then since $f_i$ is convex,
\begin{equation}
    f_{i,\sigma}(\bmx)-f_i(\bmx_*)
    \geq f_i(\bmx)-f_i(\bmx_*)
    \geq \langle\nabla f_i(\bmx_*),\bmx-\bmx_*\rangle.
\end{equation}
This means $g_i^{\sigma}(\bmx)\geq 0$ for any $i$ and $\sigma$.
Since $f_{i,\sigma}$ is $L$-smooth, so is $g_i^{\sigma}$.
So,
\begin{equation}
    0
    \leq g_i^{\sigma}\left(\bmx-\tfrac{1}{L}\nabla g_i^{\sigma}(\bmx)\right)
    \leq g_i^{\sigma}(\bmx)-\frac{1}{2L}\|g_i^{\sigma}(\bmx)\|^2
\end{equation}
and rearranging we have
\begin{equation}
    \|g_i^{\sigma}(\bmx)\|^2
    \leq 2Lg_i^{\sigma}(\bmx).
\end{equation}
Since
\begin{equation}
    \nabla g_i^{\sigma}(\bmx)=\nabla f_{i,\sigma}(\bmx)-\nabla f_i(\bmx_*),
\end{equation}
we have
\begin{equation}
    \|\nabla f_{i,\sigma}(\bmx)-\nabla f_i(\bmx_*)\|^2
    \leq 2L\Big(f_{i,\sigma}(\bmx)-f_i(\bmx_*)-\langle\nabla f_i(\bmx_*),\bmx-\bmx_*\rangle\Big).
\end{equation}
Therefore, taking the expectation over $i$,
\begin{align}
\begin{split}
    E(\|\nabla f_{i,\sigma}(\bmx)-\nabla f_i(\bmx_*)\|^2)
    &\leq 2LE(f_{i,\sigma}(\bmx)-f_i(\bmx_*)-\langle\nabla f_i(\bmx_*),\bmx-\bmx_*\rangle)\\
    &=2L(f_{\sigma}(\bmx)-f(\bmx_*)).
\end{split}
\end{align}
The furthermore statement can be seen by
\begin{align}
\begin{split}
    E(\|\nabla f_{i,\sigma}(\bmx)-\nabla f_{i,\tau}(\bmx_*^{\tau})\|^2)
    &\leq E(\|\nabla f_{i,\sigma}(\bmx)-\nabla f_{i}(\bmx_*)\|^2)
    + E(\|\nabla f_{i,\tau}(\bmx_*^{\tau})-\nabla f_{i}(\bmx_*)\|^2)\\
    &\leq 2L(f_{\sigma}(\bmx)-f(\bmx_*))+2L(f_{\tau}(\bmx_*^{\tau})-f(\bmx_*))\\
    &\leq 2L(f_{\sigma}(\bmx)-f(\bmx_*))+2L(f_{\sigma}(\bmx)-f(\bmx_*))\\
    &=4L(f_{\sigma}(\bmx)-f(\bmx_*)),
\end{split}
\end{align}
since $f_{\sigma}(\bmx)\geq f_{\tau}(\bmx_*^{\tau})$.
\end{proof}

Next, we bound the GSmoothSVRG gradient update by a linear combination of function outputs.
The previous lemma is applied in the proof in order to derive the bound.

\begin{lem}\label{lem:ssvrgstep2}
For each $i\in\{1,...,K\}$, let $f_i$ be $L$-smooth and convex.
For $\sigma\geq\tau\geq 0$,
\begin{equation}
    E(\|\bmv_t\|^2|\bmx_{t-1})
    \leq 4L(f_{\sigma}(\bmx_{t-1})-f(\bmx_*)+f_{\sigma}(\widetilde{\bmx})-f(\bmx_*)).
\end{equation}
\end{lem}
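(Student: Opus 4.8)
The plan is to mimic the classical SVRG variance bound of Johnson--Zhang, replacing every gradient by its smoothed counterpart and carefully tracking the two distinct smoothing scales $\sigma$ and $\tau$. Recalling that $\bmv_t=\nabla f_{i_t,\sigma}(\bmx_{t-1})-\nabla f_{i_t,\tau}(\widetilde{\bmx})+\widetilde{\bmmu}_{\tau}$ and that $\widetilde{\bmmu}_{\tau}=\frac{1}{K}\sum_{i=1}^{K}\nabla f_{i,\tau}(\widetilde{\bmx})=\nabla f_{\tau}(\widetilde{\bmx})$ (smoothing commutes with the sample average, exactly as in the introduction), I would first insert $\nabla f_{i_t}(\bmx_*)$ into the expression and split $\bmv_t=a-b$, where $a=\nabla f_{i_t,\sigma}(\bmx_{t-1})-\nabla f_{i_t}(\bmx_*)$ and $b=\nabla f_{i_t,\tau}(\widetilde{\bmx})-\nabla f_{i_t}(\bmx_*)-\nabla f_{\tau}(\widetilde{\bmx})$. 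The elementary inequality $\|a-b\|^2\leq 2\|a\|^2+2\|b\|^2$ then reduces the task to bounding the conditional second moments of $a$ and $b$ separately.

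For the $a$-term, taking the expectation over $i_t$ and invoking the first bound of Lemma~\ref{lem:ssvrgstep1} at the point $\bmx_{t-1}$ with smoothing parameter $\sigma$ gives directly $E(\|a\|^2\mid\bmx_{t-1})\leq 2L(f_{\sigma}(\bmx_{t-1})-f(\bmx_*))$. For the $b$-term, the key observation is that $b$ is centered: since $\bmx_*$ minimizes $f$ we have $\nabla f(\bmx_*)=0$, so $E_{i_t}(\nabla f_{i_t,\tau}(\widetilde{\bmx})-\nabla f_{i_t}(\bmx_*))=\nabla f_{\tau}(\widetilde{\bmx})$, which is precisely the deterministic quantity subtracted off in $b$. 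Hence $E(b\mid\bmx_{t-1})=0$, and because the variance never exceeds the raw second moment, $E(\|b\|^2\mid\bmx_{t-1})\leq E(\|\nabla f_{i_t,\tau}(\widetilde{\bmx})-\nabla f_{i_t}(\bmx_*)\|^2)$. A second application of Lemma~\ref{lem:ssvrgstep1}, now at $\widetilde{\bmx}$ with parameter $\tau$, bounds this by $2L(f_{\tau}(\widetilde{\bmx})-f(\bmx_*))$.

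It then remains to convert the $f_{\tau}(\widetilde{\bmx})$ appearing in the $b$-estimate into $f_{\sigma}(\widetilde{\bmx})$ so that the final bound involves only $\sigma$. For this I would use monotonicity of smoothing in the parameter for convex functions: since each $f_i$, and hence $f$, is convex, $f_{\tau}$ is convex by Lemma~\ref{lem:fsigmalsmoothandconvex}, and the semigroup identity $(f_{\tau})_{\eta}=f_{\sigma}$ with $\eta=\sqrt{\sigma^2-\tau^2}$ together with Lemma~\ref{lem:fsigmagreaterthanf} yields $f_{\sigma}(\widetilde{\bmx})=(f_{\tau})_{\eta}(\widetilde{\bmx})\geq f_{\tau}(\widetilde{\bmx})$ whenever $\sigma\geq\tau$. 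Substituting and adding the two contributions gives $E(\|\bmv_t\|^2\mid\bmx_{t-1})\leq 4L(f_{\sigma}(\bmx_{t-1})-f(\bmx_*))+4L(f_{\sigma}(\widetilde{\bmx})-f(\bmx_*))$, which is exactly the claim.

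The main obstacle is less the algebra than the bookkeeping of the two smoothing scales. One must verify that the control variate $\widetilde{\bmmu}_{\tau}$ is smoothed at the \emph{same} level $\tau$ as the snapshot gradient $\nabla f_{i_t,\tau}(\widetilde{\bmx})$, so that the $b$-term is genuinely mean zero and the variance-reduction cancellation survives the smoothing; and one must ensure the $\sigma\geq\tau$ monotonicity step is legitimate, which crucially relies on convexity rather than merely $L$-smoothness. Were $\tau>\sigma$ instead, this last conversion would fail and one would incur an additive smoothing cost of the type recorded in the adapted inequalities stated immediately after the lemma.
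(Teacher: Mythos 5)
Your proof is correct, and it reaches the stated bound by a genuinely different decomposition than the paper's. The paper splits $\bmv_t$ around the $\tau$-smoothed gradient at the auxiliary minimizer $\bmx_*^{\tau}$ of $f_{\tau}$, i.e.\ $\bmv_t=\big(\nabla f_{i_t,\sigma}(\bmx_{t-1})-\nabla f_{i_t,\tau}(\bmx_*^{\tau})\big)+\big(\nabla f_{i_t,\tau}(\bmx_*^{\tau})-\nabla f_{i_t,\tau}(\widetilde{\bmx})+\widetilde{\bmmu}_{\tau}\big)$, bounds \emph{both} pieces with the ``furthermore'' ($4L$) part of Lemma~\ref{lem:ssvrgstep1} (using $\nabla f_{\tau}(\bmx_*^{\tau})=0$ together with $E\|\xi-E\xi\|^2\leq E\|\xi\|^2$ on the second piece), and finishes with the same monotonicity $f_{\tau}(\widetilde{\bmx})\leq f_{\sigma}(\widetilde{\bmx})$ that you invoke. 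You instead center both differences at the unsmoothed $\nabla f_{i_t}(\bmx_*)$, exactly as in Johnson--Zhang, apply $\|a-b\|^2\leq 2\|a\|^2+2\|b\|^2$, and then need only the first ($2L$) part of Lemma~\ref{lem:ssvrgstep1}; your mean-zero step uses $\nabla f(\bmx_*)=0$ rather than $\nabla f_{\tau}(\bmx_*^{\tau})=0$. Your route buys something concrete: the constant comes out right. The paper's chain is written without the factors of $2$ --- its first inequality asserts, in effect, $E\|a+b\|^2\leq E\|a\|^2+E\|b\|^2$, which is not valid in general here, since $a$ and $b$ are correlated through $i_t$ and the cross term $E\langle a,b\rangle$ need not be nonpositive even though $E(b)=0$ --- and restoring those factors in the paper's argument would inflate its final bound to $8L(\cdots)$. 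Your split pays the factor $2$ up front, compensates with the sharper $2L$ bound, and legitimately lands on $4L(f_{\sigma}(\bmx_{t-1})-f(\bmx_*)+f_{\sigma}(\widetilde{\bmx})-f(\bmx_*))$, while never needing $\bmx_*^{\tau}$ at all. Two small points to make explicit in a final write-up: the identity $\widetilde{\bmmu}_{\tau}=\nabla f_{\tau}(\widetilde{\bmx})$ (smoothing and differentiation commute with the finite average defining $f$), and the applicability of Lemma~\ref{lem:fsigmagreaterthanf} to the convex, everywhere-differentiable $f_{\tau}$ via the semigroup identity --- both are immediate from $L$-smoothness and Lemma~\ref{lem:fsigmalsmoothandconvex}, as you indicate.
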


Recall that
\begin{equation}
    \bmv_t^{\sigma,\tau}=\nabla f_{i_t,\sigma}(\bmx_{t-1})-\nabla f_{i_t,\tau}(\widetilde{\bmx})+\widetilde{\bmmu}_{\tau}.
\end{equation}
This means
\begin{align}
\begin{split}
    \label{eqn:expectationofvt}
    E(\bmv_t^{\sigma,\tau}|\bmx_{t-1})
    &=\nabla f_{\sigma}(\bmx_{t-1})-\nabla f_{\tau}(\widetilde{\bmx})+\nabla f_{\tau}(\widetilde{\bmx})\\
    &=\nabla f_{\sigma}(\bmx_{t-1}).
\end{split}
\end{align}

\begin{proof}
Observe
\begin{align}
\begin{split}
    E(\|\bmv_t\|^2|\bmx_{t-1})
    &=E(\|\nabla f_{i_t,\sigma}(\bmx_{t-1})-\nabla f_{i_t,\tau}(\widetilde{\bmx})+\widetilde{\bmmu}_{\tau}\|^2|\bmx_{t-1})\\
    &\leq 
        E(\|\nabla f_{i_t,\sigma}(\bmx_{t-1})-\nabla f_{i_t,\tau}(\bmx_*^{\tau})\|^2|\bmx_{t-1})\\&\qquad
      + E(\|\nabla f_{i_t,\tau}(\bmx_*^{\tau})-\nabla f_{i_t,\tau}(\widetilde{\bmx})+\widetilde{\bmmu}_{\tau}\|^2|\bmx_{t-1})\\
    &\stackrel{(1)}{\leq}
        E(\|\nabla f_{i_t,\sigma}(\bmx_{t-1})-\nabla f_{i_t,\tau}(\bmx_*^{\tau})\|^2|\bmx_{t-1})\\&\qquad
      + E(\|\nabla f_{i_t,\tau}(\bmx_*^{\tau})-\nabla f_{i_t,\tau}(\widetilde{\bmx})-E(f_{i_t,\tau}(\bmx_*^{\tau})-\nabla f_{i_t,\tau}(\widetilde{\bmx}))\|^2|\bmx_{t-1})\\
    &\stackrel{(2)}{\leq}
        E(\|\nabla f_{i_t,\sigma}(\bmx_{t-1})-\nabla f_{i_t,\tau}(\bmx_*^{\tau})\|^2|\bmx_{t-1})
      + E(\|\nabla f_{i_t,\tau}(\bmx_*^{\tau})-\nabla f_{i_t,\tau}(\widetilde{\bmx})\|^2|\bmx_{t-1})\\
    &\stackrel{\text{Lem~\ref{lem:ssvrgstep1}}}{\leq}
        4L(f_{\sigma}(\bmx_{t-1})-f(\bmx_*))
      + 4L(f_{\tau}(\widetilde{\bmx})-f(\bmx_*))\\
    &\stackrel{(3)}{\leq}
        4L(f_{\sigma}(\bmx_{t-1})-f(\bmx_*)+f_{\sigma}(\widetilde{\bmx})-f(\bmx_*))
\end{split}
\end{align}
where step (1) is due to $E(\nabla f_{i_t,\tau}(\bmx_*^{\tau}))=0$, step (2) follows from $E(\|\xi-E(\xi)\|^2)=E(\|\xi\|^2)-\|E(\xi)\|^2\leq E(\|\xi\|^2)$ for any random vector $\xi$, and step (3) is because $f$ is convex and $\sigma\geq\tau$.
\end{proof}

With this bound on the GSmoothSVRG gradient update, we are ready to bound the expected value of the iterates.

\begin{lem}\label{lem:ssvrgstep3}
For each $i\in\{1,...,K\}$, let $f_i$ be $L$-smooth and convex.
For $\sigma\geq\tau\geq 0$,
\begin{equation}
    2\lrate(1-2L\lrate)mE(f_{\sigma}(\widetilde{\bmx}_s)-f(\bmx_*))
    \leq E(\|\bmx_0-\bmx_*\|^2)+4L\lrate^2mE(f_{\sigma}(\widetilde{\bmx})-f(\bmx_*)).
\end{equation}
\end{lem}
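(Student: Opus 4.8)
The plan is to mirror the inner-loop analysis of SVRG in \cite{JohnsonZhang13}, substituting the smoothed surrogate $f_{\sigma}$ for $f$ everywhere the update direction $\bmv_t$ appears and inserting the smoothing lemmas at the two points where an estimate of a gradient or a function value is genuinely needed. Fix an inner loop, write $\bmx_*$ for the minimizer of $f$, and track the squared distance $D_t=\|\bmx_t-\bmx_*\|^2$, starting from $\bmx_0=\widetilde{\bmx}$.

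First I would derive a one-step recursion. Expanding $\bmx_t=\bmx_{t-1}-\lrate\bmv_t$ gives
\[
    D_t=D_{t-1}-2\lrate\langle\bmv_t,\bmx_{t-1}-\bmx_*\rangle+\lrate^2\|\bmv_t\|^2 .
\]
Taking the conditional expectation given $\bmx_{t-1}$ and using (\ref{eqn:expectationofvt}), i.e. $E(\bmv_t\mid\bmx_{t-1})=\nabla f_{\sigma}(\bmx_{t-1})$, replaces the cross term by $\langle\nabla f_{\sigma}(\bmx_{t-1}),\bmx_{t-1}-\bmx_*\rangle$. I would lower-bound this by $f_{\sigma}(\bmx_{t-1})-f_{\sigma}(\bmx_*)$ using convexity of $f_{\sigma}$ (Lemma~\ref{lem:fsigmalsmoothandconvex}, valid because each $f_i$, and hence $f$, is convex), and bound the second-moment term by Lemma~\ref{lem:ssvrgstep2}. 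This produces
\[
    E(D_t\mid\bmx_{t-1})\le D_{t-1}-2\lrate\big(f_{\sigma}(\bmx_{t-1})-f_{\sigma}(\bmx_*)\big)+4L\lrate^2\big(f_{\sigma}(\bmx_{t-1})-f(\bmx_*)+f_{\sigma}(\widetilde{\bmx})-f(\bmx_*)\big).
\]

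Next I would take full expectations, sum over $t=1,\dots,m$ so that the $D_t$ telescope, and discard the nonnegative terminal distance $E(D_m)\ge 0$. Collecting the coefficient of $\sum_t E(f_{\sigma}(\bmx_{t-1}))$ yields the factor $1-2L\lrate$; since $\widetilde{\bmx}_s$ is a uniformly chosen inner iterate, Jensen's inequality (again convexity of $f_{\sigma}$) gives $m\,E(f_{\sigma}(\widetilde{\bmx}_s))\le\sum_t E(f_{\sigma}(\bmx_{t-1}))$, collapsing the sum to the single left-hand term. Rearranging into the claimed form $2\lrate(1-2L\lrate)m\,E(f_{\sigma}(\widetilde{\bmx}_s)-f(\bmx_*))$ is then bookkeeping.

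The main obstacle is reconciling optimum values: the convexity step naturally produces $f_{\sigma}(\bmx_*)$, whereas the statement is phrased with $f(\bmx_*)$. Lemma~\ref{lem:fsigmagreaterthanf} gives $f_{\sigma}(\bmx_*)\ge f(\bmx_*)$, which is the correctly signed inequality needed so that discarding $E(D_m)\ge 0$ stays valid, while Lemma~\ref{lem:differentsmoothingvalues} (smoothing values $\sigma$ and $0$) controls the leftover $f_{\sigma}(\bmx_*)-f(\bmx_*)\le\frac{Ld}{4}\sigma^2$; this residual is precisely the smoothing cost that is carried through and later absorbed when the bound is iterated across the outer loops in Theorem~\ref{thm:ssvrg}. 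The only further care required is tracking which smoothing level is attached to each evaluation so that every use of Lemmas~\ref{lem:ssvrgstep1}--\ref{lem:ssvrgstep2} respects the hypothesis $\sigma\ge\tau\ge 0$ (here $\tau=\sigma_s=\sigma$ within an inner loop); otherwise the argument is the Johnson--Zhang telescoping carried out verbatim.
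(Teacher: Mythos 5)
Your proposal follows the paper's proof step for step: the same expansion of $\|\bmx_t-\bmx_*\|^2$, the same use of $E(\bmv_t\mid\bmx_{t-1})=\nabla f_{\sigma}(\bmx_{t-1})$ from (\ref{eqn:expectationofvt}), the same appeal to Lemma~\ref{lem:ssvrgstep2}, and the same telescoping over the $m$ inner steps (where you invoke Jensen, the paper uses the exact identity $mE(f_{\sigma}(\widetilde{\bmx}_s)\mid\bmx_0,\dots,\bmx_{m-1})=\sum_t f_{\sigma}(\bmx_t)$, since $\widetilde{\bmx}_s$ is a uniformly random iterate rather than the average; the direction you need holds either way). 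The genuine gap is in how you reconcile $f_{\sigma}(\bmx_*)$ with $f(\bmx_*)$. Convexity of $f_{\sigma}$ puts the term $+2\lrate f_{\sigma}(\bmx_*)$ into the one-step upper bound, and to reach the stated lemma you must replace it by $+2\lrate f(\bmx_*)$, i.e., bound it above by something \emph{smaller}, because $f_{\sigma}(\bmx_*)\geq f(\bmx_*)$ by Lemma~\ref{lem:fsigmagreaterthanf}. The inequality you call ``correctly signed'' is in fact signed against you: it forces a nonnegative residual $2\lrate\big(f_{\sigma}(\bmx_*)-f(\bmx_*)\big)$ to be carried in every inner step, so what your argument actually proves is
\begin{equation*}
    2\lrate(1-2L\lrate)m\,E(f_{\sigma}(\widetilde{\bmx}_s)-f(\bmx_*))
    \leq E(\|\bmx_0-\bmx_*\|^2)+4L\lrate^2m\,E(f_{\sigma}(\widetilde{\bmx})-f(\bmx_*))
    +2\lrate m\big(f_{\sigma}(\bmx_*)-f(\bmx_*)\big),
\end{equation*}
a strictly weaker statement (the residual is at most $\tfrac{Ld}{4}\sigma^2$ per step by Lemma~\ref{lem:differentsmoothingvalues}, but it does not vanish). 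Your claim that this residual is ``precisely the smoothing cost\dots absorbed when the bound is iterated across the outer loops'' is not correct: the lemma as stated has no such term, Lemma~\ref{lem:ssvrgstep4} consumes this lemma verbatim, and the $\max(0,\sigma_{i-1}^2-\sigma_i^2)$ cost in Theorem~\ref{thm:ssvrg} comes from switching smoothing levels between outer iterations via Lemma~\ref{lem:differentsmoothingvalues}, not from any inner-loop residual; absorbing yours would require re-deriving both downstream results with an extra $O(\lrate mLd\sigma^2)$ term.

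You should also know that this gap is not yours alone: the paper's proof hides the identical issue behind its one-word ``conv.'' step, which asserts $\langle\nabla f_{\sigma}(\bmx_{t-1}),\bmx_{t-1}-\bmx_*\rangle\geq f_{\sigma}(\bmx_{t-1})-f(\bmx_*)$. That inequality is false in general: at $\bmx_{t-1}=\bmx_*$ the left side is $0$ while the right side is $f_{\sigma}(\bmx_*)-f(\bmx_*)>0$ for non-constant $f$ (Lemma~\ref{lem:fsigmagreaterthanfnonconvex}). Indeed the lemma itself fails without the residual: take every $f_i(x)=\tfrac12 x^2$ (so $L=1$, $\widetilde{\bmmu}_{\tau}=\nabla f_{i,\tau}(\widetilde{\bmx})$, hence $\bmv_t=\nabla f_{\sigma}(\bmx_{t-1})=\bmx_{t-1}$) and start at $\widetilde{\bmx}=\bmx_0=\bmx_*=0$; then every iterate is $0$, the left side equals $2\lrate(1-2\lrate)m\sigma^2/4$, the right side equals $\lrate^2m\sigma^2$, and the stated inequality is violated whenever $\lrate<\tfrac14$ and $\sigma>0$. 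So the honest output of your derivation---the bound with the extra $2\lrate m(f_{\sigma}(\bmx_*)-f(\bmx_*))$ term---is the correct form of the lemma; it is the statement (and the paper's own proof), not your telescoping, that needs repair, and the correction then has to be propagated through Lemma~\ref{lem:ssvrgstep4} and Theorem~\ref{thm:ssvrg}.
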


\begin{proof}
First,
\begin{align}
\begin{split}
    E(\|\bmx_t-\bmx_*\|^2|\bmx_{t-1})
    &\stackrel{\text{def. }\bmx_t}{=}\|\bmx_{t-1}-\bmx_*\|^2-2\lrate\langle \bmx_{t-1}-\bmx_*,E(\bmv_t|\bmx_{t-1})\rangle+\lrate^2E(\|\bmv_t\|^2|\bmx_{t-1})\\
    &\stackrel{\text{eqn. \eqref{eqn:expectationofvt}}}{=}\|\bmx_{t-1}-\bmx_*\|^2-2\lrate\langle \bmx_{t-1}-\bmx_*,\nabla f_{\sigma}(\bmx_{t-1})\rangle+\lrate^2E(\|\bmv_t\|^2|\bmx_{t-1})\\
    &\stackrel{\text{Lem. \ref{lem:ssvrgstep2}}}{\leq}
        \|\bmx_{t-1}-\bmx_*\|^2
        -2\lrate\langle \bmx_{t-1}-\bmx_*,\nabla f_{\sigma}(\bmx_{t-1})\rangle\\&\qquad
        +4L\lrate^2(f_{\sigma}(\bmx_{t-1})-f(\bmx_*)+f_{\sigma}(\widetilde{\bmx})-f(\bmx_*))\\
    &\stackrel{\text{conv.}}{\leq}
        \|\bmx_{t-1}-\bmx_*\|^2
        -2\lrate(f_{\sigma}(\bmx_{t-1})-f(\bmx_*))\\&\qquad
        +4L\lrate^2(f_{\sigma}(\bmx_{t-1})-f(\bmx_*)+f_{\sigma}(\widetilde{\bmx})-f(\bmx_*))\\
    &=
        \|\bmx_{t-1}-\bmx_*\|^2
        -2\lrate(1-2L\lrate)(f_{\sigma}(\bmx_{t-1})-f(\bmx_*))
        +4L\lrate^2(f_{\sigma}(\widetilde{\bmx})-f(\bmx_*)).
\end{split}
\end{align}
Since $P(\widetilde{\bmx}_s=\bmx_t)=\frac{1}{m}$ for $t=0,...,m-1$, then
\begin{equation}
    mE(f_{\sigma}(\widetilde{\bmx}_s)|\bmx_0,...,\bmx_{m-1}) = \sum_{t=0}^{m-1}f_{\sigma}(\bmx_t).
\end{equation}
So, summing over the $m$ steps gives
\begin{align}
\begin{split}
    E(\|\bmx_m-\bmx_*\|^2|\bmx_0,...,\bmx_{m-1})
    &\leq
        \|\bmx_0-\bmx_*\|^2
        -2\lrate(1-2L\lrate)mE(f_{\sigma}(\widetilde{\bmx}_s)-f(\bmx_*)|\bmx_0,...,\bmx_{m-1})\\&\qquad
        +4L\lrate^2m(f_{\sigma}(\widetilde{\bmx})-f(\bmx_*)).
\end{split}
\end{align}
Rearranging shows
\begin{multline}
    E(\|\bmx_m-\bmx_*\|^2|\bmx_0,...,\bmx_{m-1})
    +2\lrate(1-2L\lrate)mE(f_{\sigma}(\widetilde{\bmx}_s)-f(\bmx_*)|\bmx_0,...,\bmx_{m-1})
    \\\leq
        \|\bmx_0-\bmx_*\|^2
        +4L\lrate^2m(f_{\sigma}(\widetilde{\bmx})-f(\bmx_*)).
\end{multline}
Since $\|\bmx_m-\bmx_*\|^2\geq 0$,
\begin{equation}
    2\lrate(1-2L\lrate)mE(f_{\sigma}(\widetilde{\bmx}_s)-f(\bmx_*)|\bmx_0,...,\bmx_{m-1})
    \leq
        \|\bmx_0-\bmx_*\|^2
        +4L\lrate^2m(f_{\sigma}(\widetilde{\bmx})-f(\bmx_*)).
\end{equation}
Finally, taking the expectation gives
\begin{align}
\begin{split}
    &2\lrate(1-2L\lrate)mE(f_{\sigma}(\widetilde{\bmx}_s)-f(\bmx_*))\\
    &=2\lrate(1-2L\lrate)mE(E(f_{\sigma}(\widetilde{\bmx}_s)-f(\bmx_*)|\bmx_0,...,\bmx_{m-1}))\\
    &\leq
        E(E(\|\bmx_0-\bmx_*\|^2|\bmx_0,...,\bmx_{m-1}))
        +4L\lrate^2mE(f_{\sigma}(\widetilde{\bmx})-f(\bmx_*)|\bmx_0,...,\bmx_{m-1}))\\
    &=
        E(\|\bmx_0-\bmx_*\|^2)
        +4L\lrate^2mE(f_{\sigma}(\widetilde{\bmx})-f(\bmx_*)).
\end{split}
\end{align}
\end{proof}

All of the work so far now culminates in showing that, for strongly convex functions, expected difference between the iterates and minimum decreases by a multiplicative factor smaller than one each time.

\begin{lem}\label{lem:ssvrgstep4}
Assume $f_i$ is convex and $L$-smooth and $f$ is $\gamma$-strongly convex (for some $\gamma>0$).
For $\sigma\geq\tau\geq 0$, if $f$ is $\gamma$-strongly convex, then
\begin{equation}
    E(f_{\sigma}(\widetilde{\bmx}_s)-f(\bmx_*))
    \leq \frac{1+2L\lrate^2m}{\lrate\gamma(1-2L\lrate)m}E(f_{\sigma}(\widetilde{\bmx})-f(\bmx_*)).
\end{equation}
\end{lem}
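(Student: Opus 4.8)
The plan is to build directly on Lemma~\ref{lem:ssvrgstep3}, whose right-hand side already contains $E(f_{\sigma}(\widetilde{\bmx})-f(\bmx_*))$ alongside the stray term $E(\|\bmx_0-\bmx_*\|^2)$, while its left-hand side is a positive multiple of $E(f_{\sigma}(\widetilde{\bmx}_s)-f(\bmx_*))$. Thus, to obtain a one-step recursion of exactly the claimed form, all I need is to re-express $E(\|\bmx_0-\bmx_*\|^2)$ as a constant multiple of $E(f_{\sigma}(\widetilde{\bmx})-f(\bmx_*))$, after which the entire right-hand side collapses and I can divide through by the coefficient $2\lrate(1-2L\lrate)m$.

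The conversion proceeds in two steps. First, in Algorithm~\ref{alg:ssvrg} the inner loop is initialized at $\bmx_0=\widetilde{\bmx}$, so $E(\|\bmx_0-\bmx_*\|^2)=E(\|\widetilde{\bmx}-\bmx_*\|^2)$. Since $f$ is $\gamma$-strongly convex and $L$-smooth (hence differentiable), I would apply the equivalent first-order characterization of strong convexity at the minimizer, where $\nabla f(\bmx_*)=0$, to get
\begin{equation}
    f(\widetilde{\bmx})-f(\bmx_*)\geq\frac{\gamma}{2}\|\widetilde{\bmx}-\bmx_*\|^2,
\end{equation}
equivalently $\|\widetilde{\bmx}-\bmx_*\|^2\leq\frac{2}{\gamma}(f(\widetilde{\bmx})-f(\bmx_*))$. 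Second, to match the smoothed objective on the right of Lemma~\ref{lem:ssvrgstep3}, I would invoke Lemma~\ref{lem:fsigmagreaterthanf}: each $f_i$, and therefore $f$, is convex and differentiable, so $f(\widetilde{\bmx})\leq f_{\sigma}(\widetilde{\bmx})$ and consequently
\begin{equation}
    E(\|\bmx_0-\bmx_*\|^2)\leq\frac{2}{\gamma}E(f(\widetilde{\bmx})-f(\bmx_*))\leq\frac{2}{\gamma}E(f_{\sigma}(\widetilde{\bmx})-f(\bmx_*)).
\end{equation}

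Substituting this into Lemma~\ref{lem:ssvrgstep3} turns the whole right-hand side into $\bigl(\tfrac{2}{\gamma}+4L\lrate^2 m\bigr)E(f_{\sigma}(\widetilde{\bmx})-f(\bmx_*))$; dividing by $2\lrate(1-2L\lrate)m$ then splits the multiplicative factor into a distance contribution $\frac{1}{\gamma\lrate(1-2L\lrate)m}$ and a variance contribution $\frac{2L\lrate}{1-2L\lrate}$, whose sum is the claimed factor $\alpha$ of Theorem~\ref{thm:ssvrg}. The proof is mostly bookkeeping once these two substitutions are in place, and the one point that genuinely must be handled with care is the \emph{direction} of the replacement of $f$ by $f_{\sigma}$: the strong-convexity estimate naturally produces $f(\widetilde{\bmx})-f(\bmx_*)$, and because this sits inside an upper bound we need precisely the inequality $f\leq f_{\sigma}$ from Lemma~\ref{lem:fsigmagreaterthanf} rather than a reverse bound. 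A secondary check is that strong convexity is used only for the average $f$ (not for the individual $f_i$, which are assumed merely convex and $L$-smooth), so that the first-order inequality above is legitimate and the resulting factor is indeed the contraction constant $\alpha<1$ required to iterate in the proof of Theorem~\ref{thm:ssvrg}.
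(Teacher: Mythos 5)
Your proposal is correct and follows the same overall strategy as the paper's proof: both collapse the term $E(\|\bmx_0-\bmx_*\|^2)$ in Lemma~\ref{lem:ssvrgstep3} into a multiple of $E(f_{\sigma}(\widetilde{\bmx})-f(\bmx_*))$ via strong convexity, substitute, and divide by $2\lrate(1-2L\lrate)m$. The one place you diverge is in how that conversion is made. The paper first invokes Lemma~\ref{lem:fstronglyconvexthensoisfsigma} (smoothing preserves $\gamma$-strong convexity) to write $\|\widetilde{\bmx}-\bmx_*\|^2\leq\frac{2}{\gamma}\big(f_{\sigma}(\widetilde{\bmx})-f_{\sigma}(\bmx_*)\big)$ and then uses $f_{\sigma}(\bmx_*)\geq f(\bmx_*)$; you instead apply strong convexity of $f$ itself at its minimizer, $\|\widetilde{\bmx}-\bmx_*\|^2\leq\frac{2}{\gamma}\big(f(\widetilde{\bmx})-f(\bmx_*)\big)$, and then lift the numerator with $f(\widetilde{\bmx})\leq f_{\sigma}(\widetilde{\bmx})$ from Lemma~\ref{lem:fsigmagreaterthanf}. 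Your variant is arguably cleaner logically: the first-order strong-convexity bound is applied at $\bmx_*$, where $\nabla f(\bmx_*)=0$ genuinely holds, whereas the paper applies the analogous bound to $f_{\sigma}$ at $\bmx_*$, which is \emph{not} the minimizer of $f_{\sigma}$, so a cross term $\langle\nabla f_{\sigma}(\bmx_*),\widetilde{\bmx}-\bmx_*\rangle$ is silently dropped there; your route also makes Lemma~\ref{lem:fstronglyconvexthensoisfsigma} unnecessary for this step. One bookkeeping caveat: the arithmetic (yours and the paper's alike) yields the factor $\frac{1}{\gamma\lrate(1-2L\lrate)m}+\frac{2L\lrate}{1-2L\lrate}=\frac{1+2L\gamma\lrate^2m}{\gamma\lrate(1-2L\lrate)m}$, which agrees with the constant in \cite{JohnsonZhang13} but differs from the constant printed in the lemma, $\frac{1+2L\lrate^2m}{\lrate\gamma(1-2L\lrate)m}$, by a factor of $\gamma$ in the second term; that discrepancy is a typo in the statement (the paper's own final line even carries a sign slip before ``arithmetic gives the result''), not a flaw in your argument.
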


\begin{proof}
Since $f$ is $\gamma$-strongly convex, so is $f_{\sigma}$.
As $\bmx_0=\widetilde{\bmx}$,
\begin{equation}
    E(\|\bmx_0-\bmx_*\|^2)
    \leq\frac{2}{\gamma}E(f_{\sigma}(\widetilde{\bmx})-f_{\sigma}(\bmx_*))
    \leq\frac{2}{\gamma}E(f_{\sigma}(\widetilde{\bmx})-f(\bmx_*)).
\end{equation}
Combining this with Lemma~\ref{lem:ssvrgstep3} shows
\begin{align}
\begin{split}
    2\lrate(1-2L\lrate)mE(f_{\sigma}(\widetilde{\bmx}_s)-f(\bmx_*))
    &\leq E(\|\bmx_0-\bmx_*\|^2)+4L\lrate^2mE(f_{\sigma}(\widetilde{\bmx})-f(\bmx_*))\\
    &\leq\frac{2}{\gamma}E(f_{\sigma}(\widetilde{\bmx})-f(\bmx_*))+4L\lrate^2mE(f_{\sigma}(\widetilde{\bmx})-f(\bmx_*))\\
    &=2\left(\frac{1}{\gamma}-2L\lrate^2m\right)E(f_{\sigma}(\widetilde{\bmx})-f(\bmx_*)).
\end{split}
\end{align}
Arithmetic gives the result.
\end{proof}

With the result of the previous lemma in hand, we are finally ready to prove that GSmoothSVRG converges.

\begin{proof}[Proof of Theorem~\ref{thm:ssvrg}]
Using Lemma~\ref{lem:ssvrgstep4}, we have
\begin{align}
\begin{split}
    E(f_{\sigma_s}(\widetilde{\bmx}_s)-f(\bmx_*))
    &\leq\alpha E(f_{\sigma_s}(\widetilde{\bmx}_{s-1})-f(\bmx_*))\\
    &\leq\alpha E(f_{\sigma_{s-1}}(\widetilde{\bmx}_{s-1})-f(\bmx_*))+\frac{Ld}{2}\alpha\max(0,\sigma_{s-1}^2-\sigma_s^2)\\
    &\vdots\\
    &\leq\alpha^s E(f_{\sigma_{0}}(\widetilde{\bmx}_{0})-f(\bmx_*))+\frac{Ld}{2}\sum_{i=1}^{s}\alpha^i\max(0,\sigma_{i-1}^2-\sigma_i^2).
\end{split}
\end{align}
\end{proof}

Note that the only condition that $\tau$ in GSmoothSVRG needs to satisfy is $\sigma_t\geq\tau\geq 0$ at each iteration.
The two obvious choices for $\tau$ are $\sigma_s$ or $0$.
If $\tau = \sigma_s$, then we are performing SVRG on $f_{\sigma_s}$.
On the other hand, if $\tau = 0$, then we are making the control variate of SVRG include information about the gradient of the original, non-smoothed function.

\subsection{Numerical Experiments for GSmoothSVRG}

For the GSmoothSVRG experiments, we repeat the noisy MNIST experiments using a batch size of 1.
Since our primary focus was to compare variance of the norm of the iterative update and the rate of convergence between GSmoothSVRG and SGD, we only trained for one epoch (of 5000 steps in total).
Additionally, we wanted to use the same learning rate for both algorithms; after a hyperparameter search, we chose to use 0.01 for both methods.
A hyperparameter search was also done to pick regularization weights as well as the smoothing parameter for the inner GSmoothSVRG update.
The regularization weights for GSmoothSVRG were $10^{-13}$ for both the first ReLU layer and first dense layer and $10^{-11}$ for both the second ReLU layer and the output layer, and we use the same smoothing parameter for both the inner and outer updates.
The standard deviation of the norm of the gradient update for all of the GSmoothSVRG experiments can be found in Figure~\ref{fig:full_svrg_noise_variation}.

\begin{figure}
    \centering
    \includegraphics[width=\textwidth]{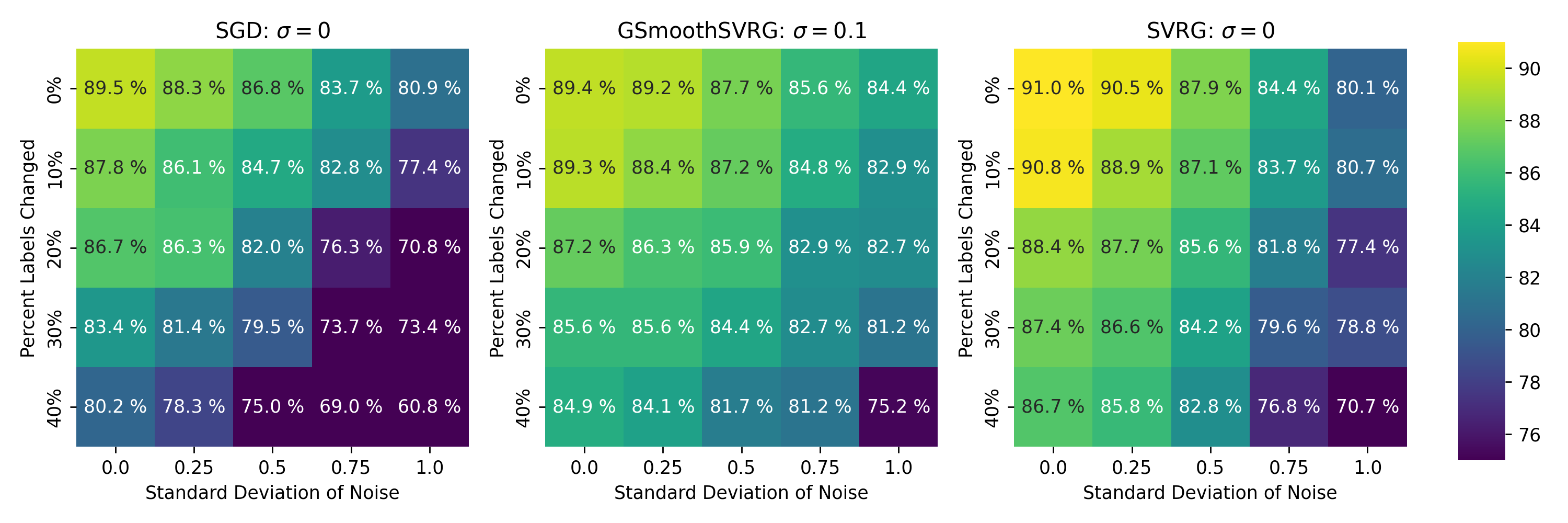}
    \caption{SVRG Noise Heatmaps}
    \label{fig:full_svrg_noise}
\end{figure}

\begin{figure}
    \centering
    \includegraphics[width=\textwidth]{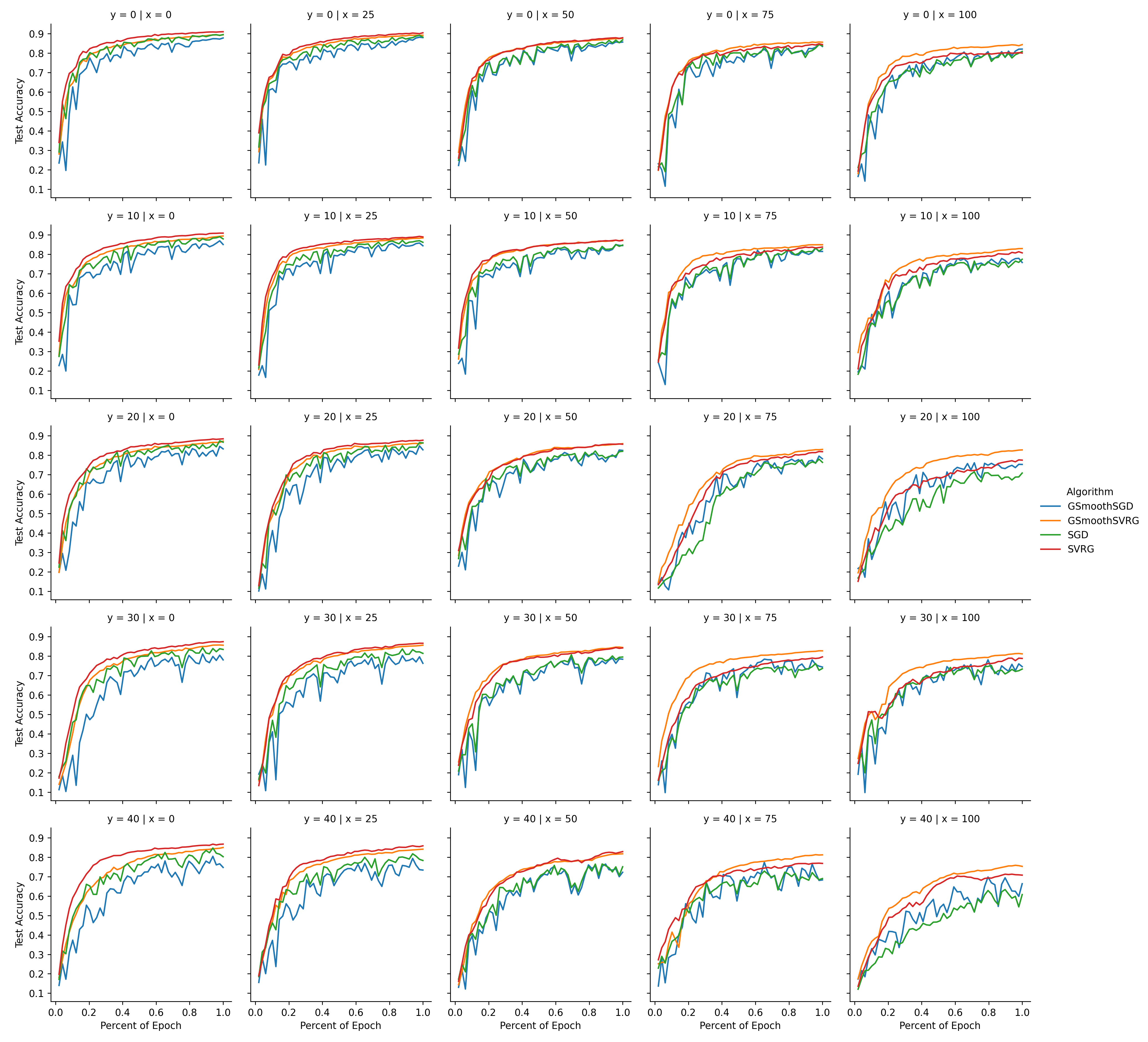}
    \caption{SVRG Noise Heatmap Accuracy}
    \label{fig:full_svrg_noise_accuracy}
\end{figure}

\begin{figure}
    \centering
    \includegraphics[width=\textwidth]{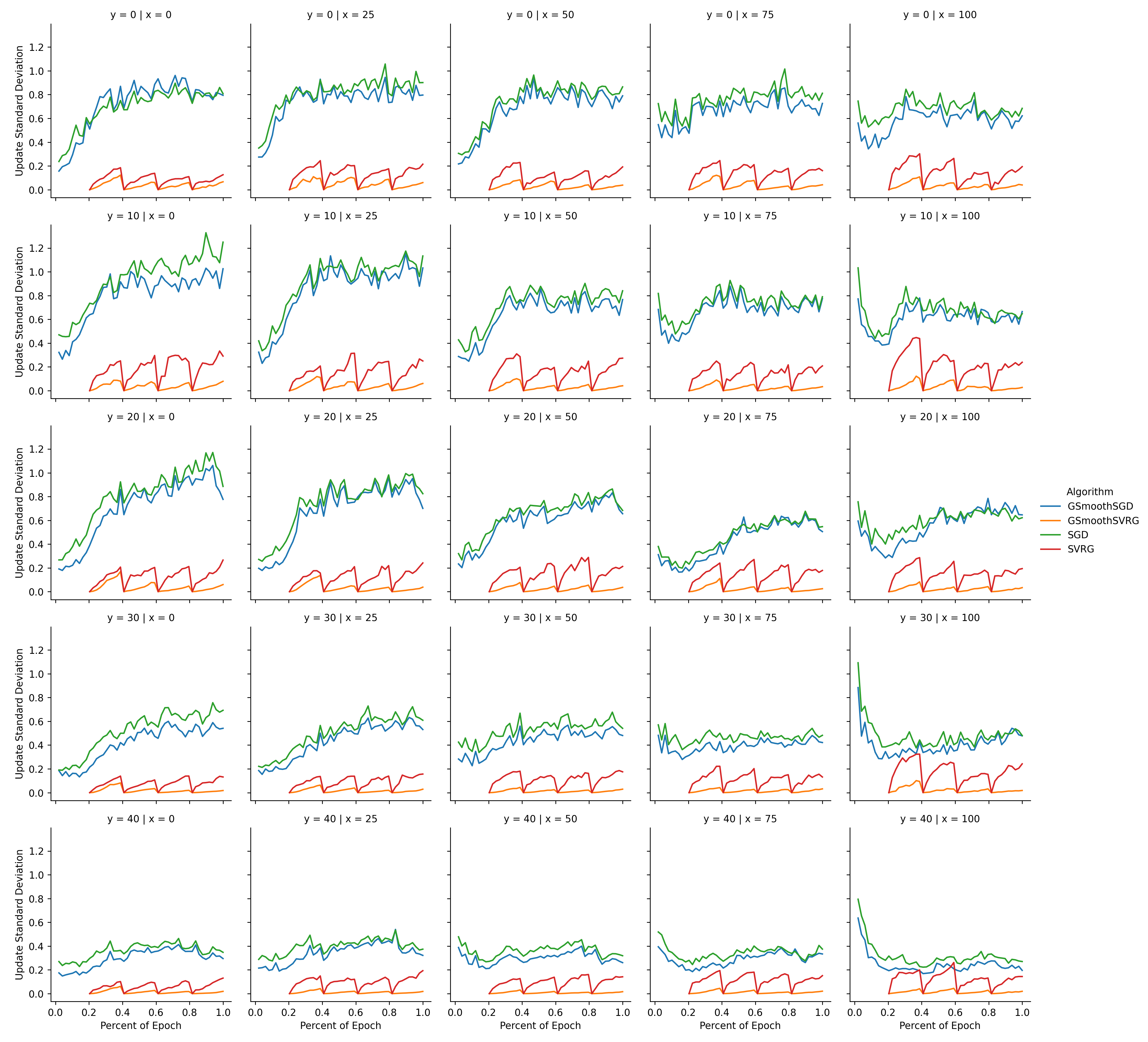}
    \caption{SVRG Noise Heatmap Update Standard Deviation}
    \label{fig:full_svrg_noise_variation}
\end{figure}

\end{document}